\newtheorem{thm}{Theorem}[section]
\newtheorem{prop}[thm]{Proposition}
\newtheorem{dfn&prop}[thm]{Definition and Proposition}
\newtheorem{dfn&thm}[thm]{Definition and Theorem}
\theoremstyle{definition}
\newtheorem{defn}[thm]{Definition}
\newtheorem{observation}[thm]{Observation}
\newtheorem*{structure}{Structure of the article}
\newtheorem{example}[thm]{Example}
\newtheorem{discussion}[thm]{}
\theoremstyle{remark}
\newtheorem*{Claim}{Claim}
\newtheorem*{remark}{Remark}
\newtheorem*{notations}{Notation}
\numberwithin{equation}{section}
\newenvironment{subproof}{\begin{proof}[Proof of claim.]}{%
	\end{proof}}
\newcommand{\Deriv}{{\rm D}}
\def\phi{\varphi}
\def\B{{\mathcal{B}}}
\def\C{{\mathbb{C}}}
\def\D{{\mathbb{D}}}
\def\N{{\mathbb{N}}}
\def\R{{\mathbb{R}}}
\newcommand{\unbdd}[1]{\accentset{\infty}{#1}}
\newcommand{\Ima}{\operatorname{Im}}
\newcommand{\Rea}{\operatorname{Re}}
\newcommand{\Crit}{\operatorname{Crit}}
\newcommand{\Addr}{\operatorname{Addr}}
\newcommand{\ul}{\underline}
\newcommand{\diam}{\text{diam}}
\newcommand{\ultau}{\underline{\tau}}
\newcommand{\T}{\mathcal{T}}
\newcommand{\AV}{\operatorname{AV}}
\newcommand{\CV}{\operatorname{CV}}
\newcommand{\Orb}{\operatorname{Orb}}
\def\s{{\underline s}}
\newcommand*{\defeq}{\mathrel{\vcenter{\baselineskip0.5ex \lineskiplimit0pt
			\hbox{\scriptsize.}\hbox{\scriptsize.}}}%
	=}
\newcommand{\eqdef}{=\mathrel{\vcenter{\baselineskip0.5ex \lineskiplimit0pt
			\hbox{\scriptsize.}\hbox{\scriptsize.}}}}
\title{Combinatorics of criniferous entire maps with escaping critical values}
\author[L. Pardo-Sim\'{o}n]{Leticia Pardo-Sim\'{o}n}
\address{Institute of Mathematics of the Polish Academy of Sciences \\ ul.  \'Sniadeckich 8 \\ 00-656 Warsaw \\ Poland \\ 
	 \textsc{\newline \indent 
	   \href{https://orcid.org/0000-0003-4039-5556%
	     }{\includegraphics[width=1em,height=1em]{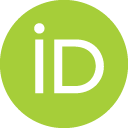} {\normalfont https://orcid.org/0000-0003-4039-5556}}%%
	       }}
\email{l.pardo-simon@impan.pl}
\subjclass[2010]{Primary 37F10; secondary 30D05.}
\begin{document}
%\date{\today}

\begin{abstract}
A transcendental entire function is called \textit{criniferous} if every point in its escaping set can eventually be connected to infinity by a curve of escaping points. Many transcendental entire functions with bounded singular set have this property, and this class has recently attracted much attention in complex dynamics. In the presence of escaping critical values, these curves \textit{break} or \textit{split} at (preimages of) critical points. In this paper, we develop combinatorial tools that allow us to provide a complete description of the escaping set of any criniferous function without asymptotic values on its Julia set. In particular, our description precisely reflects the splitting phenomenon. This combinatorial structure provides the foundation for further study of this class of functions. For example, we use these results in \cite{mio_splitting} to give the first full description of the topological dynamics of a class of transcendental entire maps with unbounded postsingular set.
\end{abstract}

\vspace{-10pt}
\maketitle

\section{Introduction}

Let $p$ be a polynomial of degree $d \geq 2$. Then, by Böttcher's Theorem, there is a conjugacy between $p$ and $z \mapsto z^{d}$ in a neighbourhood of infinity. Whenever all the orbits of the critical points of $p$ are bounded, or, equivalently, when its Julia set $J(p)$ is connected, this conjugacy can be extended to a biholomorphic map between $\C \setminus \overline{\D}$ and the basin of infinity of $p$, that we denote by $I(p)$. In particular, \textit{dynamic rays} of $p$ are defined as the curves that arise as preimages of radial rays from $\partial \D$ to $\infty$ under this conjugacy, and provide a natural foliation of  $I(p)$. If $J(p)$ is additionally locally connected, then each ray has a unique accumulation point in $J(p)$, and we say that it \textit{lands}. Dynamic rays and their landing behaviour are a powerful combinatorial tool in the study of polynomial dynamics, see \cite{Orsaynotes}.

For a transcendental entire map, $f$, infinity is an essential singularity, and consequently, Böttcher's Theorem no longer applies. In this paper, we restrict ourselves to the widely studied \textit{Eremenko-Lyubich class} $\B$, consisting of all transcendental entire maps, $f$, with bounded \textit{singular set} $S(f)$, which is the closure of the set of its critical and asymptotic values. Then, it is known that for some $f\in \B$, every point in their \textit{escaping set} \[I(f)\defeq \{z\in\C : f^n(z)\to \infty \text{ as } n\to \infty\}\]
can be connected to infinity by an escaping curve, called \textit{dynamic ray} by analogy with the polynomial case, e.g. \cite{Baranski_Trees, RRRS, mio_newCB}. More precisely, we adopt \cite[Definition 2.2]{RRRS} and \cite[Definition~1.2]{lasse_dreadlocks}:
\begin{defn}[Dynamic rays, criniferous maps]\label{def_ray}
Let $f$ be a transcendental entire function. A \emph{ray tail} of $f$ is an injective curve $\gamma :[t_0,\infty)\rightarrow I(f)$, with $t_0>0$, such that
\begin{itemize}
\item for each $n\geq 1$, $t \mapsto f^{n}(\gamma(t))$ is injective with $\lim_{t \rightarrow \infty} f^{n}(\gamma(t))=\infty$;
\item $f^{n}(\gamma(t))\rightarrow \infty$ uniformly %\footnote{uniform escape: for each $R \in \R$ there exists $N \geq 0$ such that for every $n \geq N$, $\vert f^{n}([t^*,\infty))\vert > R$.}
in $t$ as $n\rightarrow \infty$.
\end{itemize}
A \emph{dynamic ray} of $f$ is a maximal injective curve $\gamma :(0,\infty)\rightarrow I(f)$ such that the restriction $\gamma_{|[t,\infty)}$ is a ray tail for all $t > 0$. We say that $\gamma$ \emph{lands} at $z$ if $\lim_{t \rightarrow 0^+} \gamma(t)=z$, and we call $z$ the \emph{endpoint} of $\gamma$. Moreover, we say that $f$ is \emph{criniferous} if for every $z\in I(f)$, there is $N\defeq N(z)\in \N$ so that $f^n(z)$ is in a ray tail for all $n\geq N$.
\end{defn}
Not all functions in $\B$ are criniferous, see e.g., \cite{RRRS, lasse_arclike, lasse_dreadlocks}. Nonetheless, it is possible to define symbolic dynamics for the points of any $f\in \B$ whose orbits stay away from a neighbourhood of $S(f)$. To this end, roughly speaking, one can partition a neighbourhood of infinity into topological half-strips, and call a sequence $\s\defeq F_0F_1F_2\ldots$ of such domains an \textit{(external) address}. Then, a point $z\in J(f)$ is said to have address $\s$ if $f^n(z)\in F_n$ for all $n\in \N$, and one may study the sets of points sharing their address, see \S \ref{sec_symbolic}. In particular, we show in Theorem \ref{prop_criniferous} that for a criniferous function $f\in\B$, these sets contain a collection of dynamic rays to which all points in $I(f)$ are eventually mapped. External addresses, or variants thereof, have been extensively used with great success in the study of functions in~$\B$, e.g., \cite{lasse_schleicher_combinatorics, Schleicher_Zimmer_exp,lasseSiegel,  lasse_dreadlocks,VassoBcn_innerfunctions,RRRS, lasse_arclike, lasse_Devaneyfast,BarKarp_codingtrees, Benini_Fagella_separation}. 

We note that for any $f\in \B$, all points in $I(f)$ whose orbit stays sufficiently far from the origin have a well-defined unique external address. However, it is desirable to have combinatorics for all points in $I(f)$. Recently, Benini and Rempe, \cite{lasse_dreadlocks}, developed combinatorics for escaping points of any $f\in \B$ with bounded \emph{postsingular set} $P(f) \defeq\overline{\bigcup_{n\geq 0}f^n (S(f))}$. In a rough sense, their approach consists on \textit{extending} the sets of points sharing an address to some sort of maximal connected sets, named \textit{dreadlocks}, that are not necessarily curves. In particular, this combinatorial structure allowed them to prove an analogue of Douady-Hubbard landing theorem for periodic dynamic rays for all $f\in \B$ with bounded $P(f)$.

\begin{figure}[htb]
	\centering
%	\resizebox{0.56\textwidth}{!}{
\begingroup%
  \makeatletter%
  \providecommand\color[2][]{%
    \errmessage{(Inkscape) Color is used for the text in Inkscape, but the package 'color.sty' is not loaded}%
    \renewcommand\color[2][]{}%
  }%
  \providecommand\transparent[1]{%
    \errmessage{(Inkscape) Transparency is used (non-zero) for the text in Inkscape, but the package 'transparent.sty' is not loaded}%
    \renewcommand\transparent[1]{}%
  }%
  \providecommand\rotatebox[2]{#2}%
  \newcommand*\fsize{\dimexpr\f@size pt\relax}%
  \newcommand*\lineheight[1]{\fontsize{\fsize}{#1\fsize}\selectfont}%
  \ifx\svgwidth\undefined%
    \setlength{\unitlength}{396.8503937bp}%
    \ifx\svgscale\undefined%
      \relax%
    \else%
      \setlength{\unitlength}{\unitlength * \real{\svgscale}}%
    \fi%
  \else%
    \setlength{\unitlength}{\svgwidth}%
  \fi%
  \global\let\svgwidth\undefined%
  \global\let\svgscale\undefined%
  \makeatother%
	\resizebox{0.56\textwidth}{!}{
  \begin{picture}(1,0.71428571)%
    \lineheight{1}%
    \setlength\tabcolsep{0pt}%
    \put(0,0){\includegraphics[width=\unitlength,page=1]{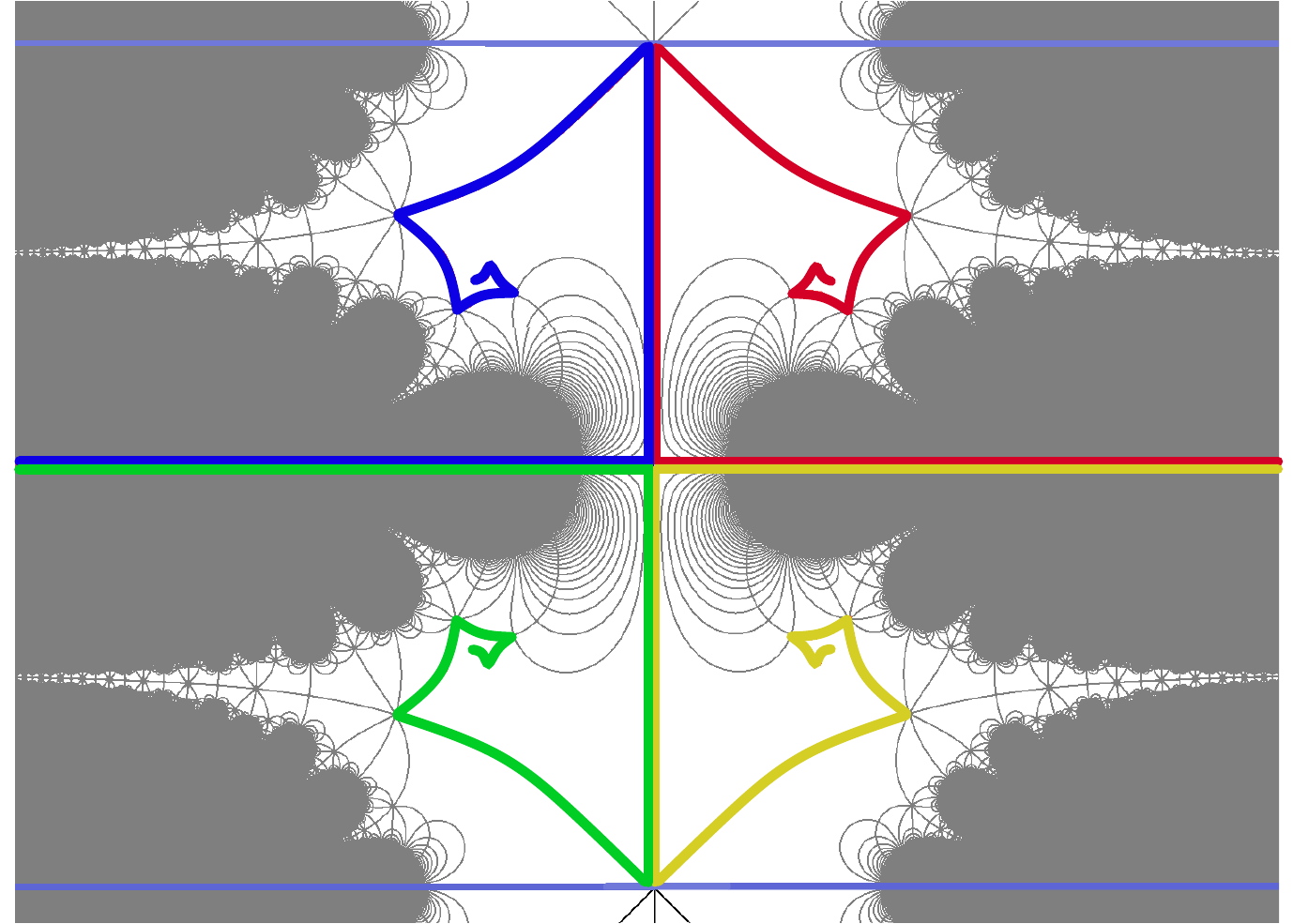}}%
  \end{picture}%
}
\endgroup
%}
	\caption[Caption for LOF]{In colour, dynamic rays of $f(z)=\cosh^2(z)$ around the critical point zero. Further iterated preimages of the real axis are depicted in~grey.\protect\footnotemark}
	\label{fig:rays_cosh222}
\end{figure}
\footnotetext{Original picture by L. Rempe, modified for this paper. It first appeared in \cite[p.~166]{mio_thesis}.}

In this paper, we develop combinatorics for certain criniferous functions with unbounded postsingular set. We start by highlighting some challenges that escaping critical values present in the transcendental case. First, notice that for polynomials, the orbit of any critical value is either bounded, or converges to the super-attracting fixed point at infinity. Furthermore, in the latter case, it is still possible to define dynamic rays as the orthogonal trajectories of level curves for the Green's function in a natural way; see \cite[Appendix A]{Goldberg_Milnor} or \cite[\S 2.2]{kiwi_rationalrays}. However, the Julia set of any transcendental entire map contains a large set of points whose orbits are neither bounded nor escaping \cite{dave_osb_bungee}, and with the essential singularity at infinity, we find very different dynamics. In particular, concerning dynamic rays.

To illustrate the phenomena that we encounter, let us consider the map $f(z)\defeq \cosh^2(z).$ The dynamics of $f$ have already been explored in \cite{ripponFast}, where it is shown that $I(f)$ (and, in fact, its \textit{fast escaping set}) is connected. The singular set of $f$ consists of the critical values $0$ and $1$, with $f(0)=1$, and $1$ escaping to infinity along the positive real axis. Note that $0,i\pi/2$ and $-i\pi/2$ are critical points, and it is easy to check that $(-\infty, 0]$ and $[0, \infty)$ are both ray tails. The vertical segments $[0, -i\pi/2]$ and $[0, i\pi/2]$ are mapped univalently to $[0,1]\subset \R^+$, and thus, the union of each segment with either one of the ray tails $(-\infty, 0]$ and $[0, \infty)$, forms a different ray tail. We can think of this structure as four ray tails that partially overlap pairwise, see Figure \ref{fig:rays_cosh222}. Then, their endpoints $-i\pi/2$ and $i\pi/2$ are preimages of $0$ and critical points, and so the structure described repeats twice at each point. Hence, it can be understood as eight ray tails that overlap pairwise. By looking at further preimages of zero, this process can be continued. Our approach suggests considering two copies of each of the tails of $f$ near infinity, and trying to extend each copy in a careful and systematic way.

To fully describe the phenomenon of ``splitting'' or overlapping of rays at critical points, we introduce the concept of signed addresses for every criniferous $f\in \B$ with escaping critical values. More precisely, let $\Addr(f)$ be the set of external addresses of $f$. Then, we consider the set of \textit{signed addresses} $\Addr(f)_\pm\defeq \Addr(f)\times \{-,+\}$, that we endow with a topology such that each $z\in I(f)$ has at least two signed addresses that depend continuously on $z$. The following theorem summarizes the main results on this paper.

\begin{thm} \label{thm_signed_intro} Let $f\in \B$ be criniferous such that $J(f)\cap \AV(f)=\emptyset$. Then, its escaping set is a collection of dynamic rays
\begin{equation}
\{\Gamma(\s,\ast)\}_{(\s,\ast)\in \Addr(f)_\pm},
\end{equation}
that overlap piecewise between (preimages of) critical points. For each $(\s,\ast)\in \Addr(f)_\pm$, we can write 
\begin{equation*}
\Gamma(\s,\ast)=\bigcup_{n\geq 0}\gamma^n_{(\s, \ast)},
\end{equation*}
so that for every $n\in \N$, $\gamma^n_{(\s, \ast)}$ is a ray tail, $\gamma^n_{(\s, \ast)}\subset \gamma^{n-1}_{(\s, \ast)}$, and there exists a neighbourhood $\tau_n(\s, \ast)\supset \gamma^n_{(\s,\ast)}$, on which there is a well-defined inverse branch 
\begin{equation}\label{eq_inversebranch}
f^{-1,[n]}_{(\s,\ast)}\defeq \left(f\vert_{ \tau_n(\s, \ast)}\right)^{-1} \colon f(\tau_n(\s, \ast)) \longrightarrow \tau_n(\s, \ast),
\end{equation}  
that coincides for all signed addresses in an open interval of $(\s,\ast)$.
\end{thm}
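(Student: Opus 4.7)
The plan is to build each ray $\Gamma(\s,\ast)$ by an iterative pullback: one starts with a ray tail near infinity carrying a given external address, and pulls it back one iterate at a time through inverse branches of $f$. The sign $\ast \in \{-,+\}$ records the binary choice forced whenever the pullback encounters a critical point. The hypothesis $J(f)\cap \AV(f)=\emptyset$ ensures that (preimages of) critical points are the \emph{only} obstructions to extending these inverse branches into $J(f)$.

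For the base case, Theorem~\ref{prop_criniferous} assigns to each $\s\in \Addr(f)$ a ray tail $\gamma^0_{\s}$ of points whose orbits stay in the fundamental half-strips determined by $\s$. Sufficiently close to infinity there are no critical points, so I would set $\gamma^0_{(\s,+)}=\gamma^0_{(\s,-)}=\gamma^0_{\s}$ and let $\tau_0(\s,\ast)$ be a neighbourhood of $\gamma^0_{\s}$ inside the relevant fundamental half-strip, on which $f$ is univalent. Inductively, suppose $\gamma^n_{(\s,\ast)}\subset \tau_n(\s,\ast)$ has been built with a well-defined inverse branch $f^{-1,[n]}_{(\s,\ast)}$ as in \eqref{eq_inversebranch}. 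I would then pick the connected component $U$ of $f^{-1}(\tau_n(\s,\ast))$ whose combinatorics are consistent with the shift of $\s$, and attempt to extend the inverse branch to $U$. The assumption on $\AV(f)$ rules out asymptotic values interfering, so the only possible obstruction is a critical point of $f$ inside $U$; in that case the two local branches of $f^{-1}$ at the critical value correspond precisely to $\ast=+$ and $\ast=-$, producing distinct $\tau_{n+1}(\s,+)\neq \tau_{n+1}(\s,-)$ from this step onward. Defining $\Gamma(\s,\ast)\defeq \bigcup_n \gamma^n_{(\s,\ast)}$, the injectivity and escape conditions of Definition~\ref{def_ray} follow from standard contraction estimates for inverse branches of maps in class~$\B$.

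The main difficulty is to prove the compatibility and exhaustiveness statements. For compatibility, I would equip $\Addr(f)_\pm$ with a topology that encodes critical-point encounters along the backward orbit, so that any two signed addresses lying in a common open interval of $(\s,\ast)$ share their neighbourhoods $\tau_n(\s,\ast)$ and hence their inverse branch $f^{-1,[n]}_{(\s,\ast)}$; this requires careful bookkeeping, at each pullback step, of which component $U$ a given signed address selects and whether that $U$ contains a critical point. For exhaustiveness, given $z\in I(f)$ the criniferous property furnishes $N$ with $f^N(z)$ lying on a base ray tail $\gamma^0_{\underline{\sigma}}$, and pulling this tail back through the orbit $z,f(z),\ldots,f^{N-1}(z)$ --- recording a sign every time a preimage of $z$ coincides with a critical point --- produces a signed address whose ray contains $z$. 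The overlap structure is then automatic: $(\s,+)$ and $(\s,-)$ agree on all $\gamma^m$ with $m$ smaller than the first step at which the pullback meets a critical point, and separate from that step on. The hardest part will be showing that the pullback procedure and the signed-address topology fit together in the sense that $z \mapsto (\s,\ast)$ is continuous with the expected fibres; this is where the full weight of the combinatorial machinery alluded to in the introduction must be deployed.
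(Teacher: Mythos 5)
Your overall architecture --- start from the Julia constituents of Theorem \ref{prop_criniferous}, pull back one iterate at a time, let the sign resolve the ambiguity at critical points, and get exhaustiveness from the criniferous hypothesis --- is the paper's architecture. But there is a genuine gap at the heart of the inductive step, in the sentence ``the two local branches of $f^{-1}$ at the critical value correspond precisely to $\ast=+$ and $\ast=-$.'' This is not what happens, for two reasons. First, at a critical point $c$ of local degree $d=\deg(f,c)$ there are $d$ local branches, and in any case the continuation of the curve is not a choice of inverse branch: the pulled-back tail already lies in one fixed component $\beta$ of $f^{-1}(\gamma^n_{(\sigma(\s),\ast)})$, and what must be chosen is which of the $2d$ arcs of $\beta$ emanating from $c$ to concatenate onto the arc you already have. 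The paper's sign selects the immediate successor or predecessor arc in the circular order at $c$ (the left/right \emph{bristle} of \ref{dis_canonical}), and --- crucially --- the \emph{same} choice is made at every critical point ever encountered along the backward orbit. If instead you record an independent bit at each encounter, you do not get a map indexed by $\Addr(f)\times\{-,+\}$; the remaining combinations of choices are realised by \emph{other} signed addresses $(\ultau,\star)$ with $\sigma^j(\ultau)=\sigma^j(\s)$, which is exactly why a preimage of a critical point lies on $2\prod_j\deg(f,f^j(c))$ distinct $\Gamma$-curves (Proposition \ref{prop_overlapping_Gamma}\ref{itemB_Gamma}) rather than on two. Your proposal does not account for this multiplicity, and your closing claim that the two signs ``separate from that step on'' conflates the first critical encounter with all subsequent ones.

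The second, related gap concerns the neighbourhoods $\tau_n(\s,\ast)$. Once some $\gamma^n_{(\s,\ast)}$ contains a critical point $c$ in its interior (which happens as soon as a bristle has been appended), $f$ is not injective on \emph{any} open neighbourhood of that curve, so $\tau_{n+1}$ cannot be produced by ``picking a component $U$ of $f^{-1}(\tau_n(\s,\ast))$ and extending the inverse branch to $U$'': the component through $c$ admits no single-valued inverse. The paper's resolution is the construction of \emph{fundamental hands} (Definition \ref{def_hands}): before each pullback one deletes the canonical tails through the escaping singular values, after a careful choice of $D$ and $\delta$ (Proposition \ref{prop_PF_fundamental}) guaranteeing that these tails are entirely contained in the relevant preimage components. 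The resulting hands are simply connected, critical-point-free, and mapped injectively (Proposition \ref{prop_basic_hand}); the set $\tau_n(\s,\ast)$ is then a one-sided, non-open neighbourhood obtained by attaching $\gamma^n_{(\s,\ast)}$ to the closure of the \emph{one} hand singled out by the sign and the cyclic order of addresses (Proposition \ref{prop_hands_fortails}), and injectivity of $f$ on this union and the ``open interval of signed addresses'' clause require separate arguments (Theorem \ref{lem_inverse_hand}). This is the essential missing idea; without it the compatibility statement you defer to ``careful bookkeeping'' has no mechanism behind it.
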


The tools developed in this paper allow us to provide in the sequel \cite{mio_splitting}, a total description of the Julia set of certain criniferous functions with escaping critical values, introduced in \cite{mio_orbifolds} and \cite{mio_newCB}; see also \cite{mio_cosine} for similar results in the cosine family.

\begin{structure}
We start recalling in Section 2 the definition of \textit{(external) addresses} in terms of \textit{fundamental domains} for functions in $\B$. We provide the set of addresses with a topology and study, for criniferous functions, the sets of points sharing the same external address. In \S \ref{sec_signed} we extend this concept for criniferous functions: we define \textit{signed addresses} and show that for in the absence of asymptotic values, all points in their escaping set have at least two signed addresses. We associate to each signed address a nested union of escaping curves that we call \textit{canonical tails}. Next, in \S\ref{sec_hands}, we introduce the concept of \textit{fundamental hands} as preimages of certain subsets of fundamental domains on which inverse branches are well-defined, and so that for each canonical tail, we can find an inverse branch that contains the tail on its image. In particular, the concept of fundamental hands extends that of \textit{fundamental tails} from \cite{lasse_dreadlocks}. Theorem \ref{thm_signed_intro} will follow from the combination of these results.
\end{structure}

\subsection*{Basic notation}
As introduced throughout this section, Julia and escaping set of an entire function $f$ are denoted by $J(f)$ and $I(f)$ respectively. The set of critical values is $\CV(f)$, that of asymptotic values is $\AV(f)$, and the set of critical points will be $\Crit(f)$. The set of singular values of $f$ is $S(f)$, and $P(f)$ denotes the postsingular set. We denote the complex plane by $\C$ and a disk of radius $R$ by $\D_R$. We will indicate the closure of a domain $U$ by $\overline{U}$, which must be understood to be taken in $\C$. $A\Subset B$ means that $A$ is compactly contained in $B$. For a holomorphic function $f$ and a set $A$, $\Orb^{-}(A)\defeq \bigcup^{\infty}_{n=0} f^{-n}(A)$ and $\Orb^{+}(A)\defeq \bigcup^{\infty}_{n=0} f^{n}(A)$ are the respective backward and forward orbits of $A$ under $f$.

\subsection*{Acknowledgements}
I am very grateful to my supervisors Lasse Rempe and Dave Sixsmith for their continuous help and support. I also thank Daniel Meyer and Phil Rippon for valuable comments.
\section{External addresses and symbolic dynamics}\label{sec_symbolic}

The concept of \textit{external address} for functions in $\B$ allows to assign symbolic dynamics to points whose orbit stays away from a neighbourhood of their singular set. In this section, we review this notion, and study properties of the sets of points sharing a same external address, with special emphasis on criniferous functions. %In particular, we show in Theorem \ref{prop_criniferous} that for a criniferous $f\in \B$, all of these sets contain ray tails.
\begin{defn}[Tracts, fundamental domains]\label{def_fund}
Fix $f\in\B$ and let $D$ be a bounded Jordan domain around the origin, containing $S(f)$ and $f(0)$. Each connected component of $f^{-1}(\C\setminus \overline{D})$ is a \emph{tract} of $f$, and $\mathcal{T}_f$ denotes the set of all tracts. Let $\delta$ be an arc connecting a point of $\overline{D}$ to infinity in the complement of the closure of the tracts. Denote
\begin{equation}\label{eq_deffund}
\mathcal{W} \defeq \C\setminus( \overline{D}\cup\delta).
\end{equation}
Each connected component of $f^{-1}(\mathcal{W})$ is a \emph{fundamental domain} of $f$, and we call the collection of all of them an \textit{alphabet of fundamental domains}, that we denote $\mathcal{A}(D,\delta)$. Moreover, for each $F\in \mathcal{A}(D,\delta)$, $\unbdd{F}$ is the unbounded connected component of $F\setminus \overline{D}$. 
\end{defn}

For basic properties of tracts and fundamental domains, see for example \cite[Proposition 2.19]{mio_thesis} or \cite[\S3]{lasse_dreadlocks}. A function $f\in \B$ is of said to be of \textit{disjoint type} if $P(f)\Subset \C\setminus J(f)$ and $\C\setminus J(f)$ is connected. Alternatively, disjoint type maps can be characterized as those for which there is a choice of tracts whose boundaries are disjoint from the boundary of their image. More precisely: 
\begin{prop}[Characterization of disjoint type maps]\label{prop_charac_disjoint} A function $f\in \B$ is of disjoint type if and only if there exists a Jordan domain $D \supset S(f)$ such that $\overline{f(D)} \subset \overline{D}$.
\end{prop}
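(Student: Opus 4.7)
My plan is to prove the two implications separately, exploiting that both conditions express the same intuition: that the singular orbit sits in a compact forward-invariant piece of the Fatou set.

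For the nontrivial direction $(\Rightarrow)$, assume $f$ is of disjoint type, so $P(f)\Subset \C\setminus J(f)$ and $F(f)=\C\setminus J(f)$ is connected. Since $P(f)$ is compactly contained in $F(f)$ and $S(f)\subset P(f)$, the connected Fatou set must in fact be a single immediate attracting basin of some fixed point $p_0$: wandering domains are excluded for maps in $\mathcal{B}$ (by Eremenko--Lyubich / Goldberg--Keen), Baker domains require an asymptotic tract to $\infty$ in $F(f)$ which contradicts $P(f)\Subset F(f)$, and Siegel/Cremer behaviour is ruled out because $F(f)$ is already connected and contains $S(f)$ compactly. My plan is to produce $D$ as a sublevel set of a Lyapunov function on this basin. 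Concretely, linearise $f$ near $p_0$, pull the linearising coordinate back along orbits to obtain an exhaustion of the basin by Jordan domains $V_n$ with $\overline{f(V_n)}\subset V_n$, and choose $n$ large enough so that the compact set $P(f)$ (and hence $S(f)$) is contained in $V_n$. Taking $D\defeq V_n$ gives the required domain, with the stronger inclusion $\overline{f(D)}\subset D$.

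For $(\Leftarrow)$, assume a Jordan domain $D\supset S(f)$ with $\overline{f(D)}\subset \overline{D}$ exists. First, iterating the hypothesis yields $f^n(\overline{D})\subset \overline{D}$ for every $n\geq 0$, and in particular $\Orb^{+}(S(f))\subset \overline{D}$, so $P(f)\subset \overline{D}$. Since the iterates $\{f^n\}$ are uniformly bounded on $D$, Montel's theorem places $D$ inside $F(f)$. A small technical step is to upgrade $P(f)\subset\overline{D}$ to $P(f)\Subset F(f)$: I would shrink $D$ slightly to a Jordan subdomain $D'\Subset D$ containing $S(f)$; the hypothesis still gives $\overline{f(D')}\subset \overline{D}\subset F(f)$ provided we arrange $\overline{D}\subset F(f)$, which itself follows by choosing $D'$ so that $\overline{D'}\subset D\subset F(f)$ and then replacing the role of $D$ by $D'$ in the whole argument.

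The remaining task is the connectedness of $F(f)$. Because $\overline{D}$ is forward invariant and contains $S(f)$, every periodic Fatou cycle must meet $\overline{D}$ (this uses the classification of cycles of Fatou components for maps in $\mathcal{B}$, together with the non-existence of wandering domains), and hence coincides with the component $U$ of $F(f)$ containing $D$. Any preperiodic component meeting $\C\setminus U$ would have to map eventually into $U\cap (\C\setminus U)$, which is impossible. Thus $F(f)=U$ is connected, so $f$ is of disjoint type.

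The main obstacle I anticipate is the last paragraph: turning the local invariance statement $\overline{f(D)}\subset \overline{D}$ into the global topological assertion that $F(f)$ is connected. Avoiding that extra Fatou components exist, and being careful about whether the closed or open versions of the inclusion are needed to place $\overline{D}$ (not just $D$) inside $F(f)$, are the delicate points; everything else is a direct transcription between the dynamical and geometric formulations.
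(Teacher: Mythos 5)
The paper itself offers no proof of this proposition --- it is stated with a pointer to Mihaljevi\'c-Brandt's work --- so your argument has to stand on its own, and the direction the paper actually uses later (in Observation \ref{obs_addrz}) is the forward one, which is the half of your proposal that is essentially correct: for a disjoint type map the connected Fatou set is automatically a single invariant component (you do not even need to exclude wandering domains here, since $f(\C\setminus J(f))\subset \C\setminus J(f)$ forces the unique component to be invariant), the classification of invariant components together with $P(f)\Subset \C\setminus J(f)$ makes it an immediate attracting basin, and an exhaustion argument produces the desired $D$; the only work left is upgrading the sets $V_n$ (which need not be Jordan domains) to genuine Jordan domains, e.g.\ via hyperbolic neighbourhoods of a compact connected forward-invariant set containing $P(f)$.

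The backward direction, however, has a genuine gap exactly where you flag a ``small technical step'': upgrading $P(f)\subset\overline{D}$ to $P(f)\Subset \C\setminus J(f)$. Your fix is circular --- replacing $D$ by $D'\Subset D$ destroys the hypothesis, since nothing gives $\overline{f(D')}\subset\overline{D'}$ --- and the obstruction is real. Since $\overline{f(D)}=f(\overline D)$ for bounded $D$, the stated hypothesis only says $f(\overline D)\subset\overline D$, and then $P(f)$ may accumulate on $\partial D\cap J(f)$. In fact, taken literally the ``if'' direction is false: for $f(z)=e^{z-1}$ one has $S(f)=\{0\}$, the singular orbit increases along $[0,1)$ to the parabolic fixed point $1\in J(f)$, and the union of an attracting petal $P$ at $1$ (chosen with $f(\overline P)\subset P\cup\{1\}$) with a thin neighbourhood of the orbit segment joining $0$ to $P$ is a Jordan domain $D\supset S(f)$ with $\overline{f(D)}\subset\overline D$, yet $f$ is not of disjoint type. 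The standard characterization uses the strict inclusion $\overline{f(D)}\subset D$, and with that the step is immediate: a neighbourhood of $\overline D$ maps into $D$, so $\overline D\subset \C\setminus J(f)$ and $P(f)\subset S(f)\cup f(\overline D)\subset D$ is compactly contained in the Fatou set. You should prove the strict version (or flag the discrepancy); no argument closes the gap for the non-strict one. Two further points. First, your connectedness argument does not work as written: a strictly preperiodic Fatou component $V\neq U$ simply maps into $U$ after finitely many steps, which is not contradictory (preimage components of an attracting basin are ubiquitous for polynomials); ruling them out here needs an actual argument, e.g.\ showing via the expansion estimate of Proposition \ref{prop_exp_tracts} that the Fatou set equals $\bigcup_n f^{-n}(D)$ and that every component of $f^{-1}(U)$ meets $\overline D\subset U$ because such a component omits at most asymptotic values, all of which lie in $D$. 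Second, the appeal to Eremenko--Lyubich/Goldberg--Keen for absence of wandering domains is misattributed: those results concern the Speiser class, and Bishop has constructed maps in $\B$ with wandering domains; for hyperbolic maps the conclusion holds, but via the limit-function argument, and only after $P(f)\Subset\C\setminus J(f)$ has already been established --- so the order of your steps matters.
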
 	
See for example \cite[Proposition 2.8]{helenaSemi} for a proof of Proposition \ref{prop_charac_disjoint}. The partition of $f^{-1}(\mathcal{W})$ into fundamental domains allows us to assign symbolic dynamics to those points whose orbit stays in $\mathcal{W}$.
\begin{defn}[External addresses for functions in $\B$] \label{def_extaddr}
Let $f\in\B$ and let $\mathcal{A}(D,\delta)$ be an alphabet of fundamental domains. An \emph{(infinite) external address} is a sequence $\s= F_0 F_1 F_2 \dots$ of elements in $\mathcal{A}(D,\delta)$. For each external address $\s$, we let
\begin{equation}\label{eq_Js}
J_{\s}\defeq\left\{z\in\C\colon f^n(z)\in\unbdd{F_n} \text{ for all $n\geq 0$}\right\}.
\end{equation}
We say that $\s$ is \emph{admissible} if $J_{\s}$ is non-empty, and we denote by $\Addr(f)$ the set of all admissible external addresses. If $z\in J_\s$ for some $\s \in \Addr(f)$, then we say that $z$ \textit{has (external) address} $\s.$ Moreover, $\sigma$ stands for the one-sided \emph{shift operator} on external addresses. That is, $\sigma(F_0 F_1 F_2\ldots ) = F_1F_2 \ldots$. In particular,
\begin{equation}\label{eq_inclJs}
f(J_\s)\subseteq J_{\sigma({\s})} \quad \text{ for all } \quad\s \in\Addr(f).
\end{equation}
\end{defn}

\begin{notations} Let $\s=s_0s_1s_2\ldots\in \Addr(f)$ and suppose that there exists $N\geq 0$ such that $s_i=s_N$ for all $i\geq N$. Then we write $\s=s_0\ldots s_{N-1}\overline{s_N}$.
\end{notations}
\begin{remark}
For the reader familiar with \cite{lasse_dreadlocks}, we note that the sets ``$J_{\s}$'' defined in \eqref{eq_Js}, are denoted by ``$J^0_\s(f)$'' in \cite[Definition 2.4]{lasse_dreadlocks}, and do not equal the sets ``$J_\s$'' introduced in \cite[Definition 4.2]{lasse_dreadlocks}. We have waived consistency in notation across articles in favour of simplifying notation in ours. Moreover, we note that these sets lie in $J(f)$, see \cite[Lemma 2.6]{lasse_dreadlocks}. Thus, we sometimes refer to them as \textit{Julia constituents}.
\end{remark}

\begin{observation}[Points with external address]\label{obs_addrz} Whenever it is defined, the external address of a point is unique because Julia constituents are by definition pairwise disjoint. If $f$ is of disjoint type, then by Proposition \ref{prop_charac_disjoint}, there is an alphabet $\mathcal{A}$ of fundamental domains so that $\unbdd{F}=F$ for all $F\in \mathcal{A}$. This implies that $J(f)$ is the disjoint union of its Julia constituents. That is, 
\begin{equation}\label{eq_continuadisj}
f \text { is of disjoint type }\quad \Rightarrow \quad J(f)=\bigcup_{\s\in \Addr(f)}J_\s.
\end{equation}
In particular, all points in $J(f)$ have an external address. However, this is not the case for all functions in class $\B$, as for example occurs when $S(f)\cap J(f)$ is not empty, see \S\ref{sec_signed}.
\end{observation}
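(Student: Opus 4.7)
The observation splits into two essentially independent claims, both following from definitions combined with results cited earlier.

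For uniqueness of the address: if $z \in J_\s \cap J_{\s'}$ with $\s = F_0F_1\cdots$ and $\s' = F_0'F_1'\cdots$, then for every $n$, $f^n(z) \in \unbdd{F_n} \cap \unbdd{F_n'}$. By Definition \ref{def_fund}, distinct fundamental domains are distinct connected components of the open set $f^{-1}(\mathcal{W})$, hence are pairwise disjoint, and so are their unbounded subpieces $\unbdd{F}\subset F$. This forces $F_n = F_n'$ for every $n$, i.e.\ $\s=\s'$. I would dispose of this in a single sentence, as it is merely unpacking the definition of $J_\s$.

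For the disjoint-type statement, I would proceed in two steps. First, apply Proposition \ref{prop_charac_disjoint} to pick a Jordan domain $D\supset S(f)$ with $\overline{f(D)}\subset\overline{D}$; by continuity $f(\overline{D}) \subset \overline{D}$, so $\overline{D}$ is forward-invariant. Any tract $T$ must then miss $\overline{D}$: if $z\in T\cap\overline{D}$ then, by the defining property of a tract, $f(z)\in\C\setminus\overline{D}$, while $f(z)\in f(\overline{D})\subset\overline{D}$, a contradiction. Each fundamental domain $F$ lies in a tract, so $F\cap\overline{D}=\emptyset$; being connected and unbounded, $F$ coincides with its unbounded component in $\C\setminus\overline{D}$, so $\unbdd{F}=F$ for every $F$ in the alphabet $\mathcal{A}(D,\delta)$.

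Second, I would show $J(f)=\bigcup_{\s\in\Addr(f)} J_\s$. The inclusion $\bigcup_\s J_\s\subset J(f)$ is exactly the content of \cite[Lemma 2.6]{lasse_dreadlocks}, recalled in the remark above. For the reverse inclusion, given $z\in J(f)$, I would use two standard facts about disjoint type: $\overline{D}$ lies in the Fatou set (since $D$ is absorbed into the Fatou component containing $P(f)$), and $J(f)$ is contained in $\bigcup_{T\in\mathcal{T}_f} T$, whose closure $\delta$ was chosen to avoid. Hence every iterate $f^n(z)$ lies in $f^{-1}(\C\setminus(\overline{D}\cup\delta))=f^{-1}(\mathcal{W})$, determining a unique $F_n\in\mathcal{A}$, and the sequence $\s=F_0F_1\cdots$ is the external address of $z$; pairwise disjointness of $\{J_\s\}$ then follows from the first part. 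The only mildly subtle ingredient is the inclusion $J(f)\subset\bigcup_T T$, which is the standard description of disjoint-type Julia sets; once granted, the rest is routine book-keeping.
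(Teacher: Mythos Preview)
Your proposal is correct and follows essentially the same approach as the paper. The paper states this as an observation with the reasoning given inline rather than in a separate proof environment: uniqueness from pairwise disjointness of the $J_\s$, and the disjoint-type claim from Proposition~\ref{prop_charac_disjoint} yielding $\unbdd{F}=F$; you have simply unpacked both steps in more detail, in particular supplying the argument that tracts avoid $\overline{D}$ and that $J(f)$ sits inside the union of fundamental domains, which the paper leaves implicit.
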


For the rest of the section and unless otherwise stated, we assume that for each $f\in \B$, $\Addr(f)$ has been defined with respect to some alphabet of fundamental domains. We require the following properties of Julia constituents.
\begin{thm}[Realisation of addresses {\cite[Theorem 2.5]{lasse_dreadlocks}}]\label{thm_props_Js} Let $f\in\B$. Then, for each external address $\s$, the following holds.
\begin{enumerate}[label=(\alph*)]
\item If $\s$ is admissible, then $J_\s$ contains a closed, unbounded, connected set $X$ on which the iterates of $f$ tend to infinity uniformly.
\item \label{itemb_2.5} If $X_1$ and $X_2$ are unbounded, closed, connected subsets of $J_\s$ with $X_1 \nsubseteq X_2$, then $X_2 \subseteq X_1$ and $f^n \vert_{ X_2} \rightarrow \infty$ uniformly.
\item If $\s$ is bounded, then it is admissible, that is, $J_\s \neq \emptyset$.
\end{enumerate}
\end{thm}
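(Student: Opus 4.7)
The plan is to reduce everything to logarithmic coordinates, where tracts lift to simply connected domains inside a right half-plane and the lifted map is conformal on each lifted tract with strong expansion (an Eremenko--Lyubich style Koebe estimate: $|F'(w)|\gtrsim \operatorname{Re} F(w)-\log R$, so iterates expand by a factor tending to infinity). The three parts of the statement are then obtained by combining this uniform expansion with a standard inverse-branch/pullback construction.

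\textbf{Part (a).} Pick any $z_{0}\in J_{\s}$ (available by admissibility) and set $z_{n}\defeq f^{n}(z_{0})\in\unbdd{F_{n}}$. In each $\unbdd{F_{n}}$, choose a closed unbounded connected set $\alpha_{n}$ (for instance, a curve to infinity) containing $z_{n}$ and staying in a region where the expansion estimate applies. Since each intermediate $\unbdd{F_{k}}$ is the preimage of a simply connected domain $\mathcal{W}$ under a univalent branch of $f$, the inverse branch $f^{-n}_{\s}$ of $f^{n}$ along $F_{0}F_{1}\ldots F_{n-1}$ is well-defined on a neighbourhood of $\alpha_{n}$; let $\beta_{n}\defeq f^{-n}_{\s}(\alpha_{n})$. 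Each $\beta_{n}$ is a closed unbounded connected subset of $\unbdd{F_{0}}$ containing $z_{0}$, and by the expansion estimate applied at each step its ``tails'' near infinity are compressed exponentially. Passing to a Hausdorff limit along a subsequence, I obtain a closed unbounded connected set $X\ni z_{0}$ whose image under every $f^{n}$ lies in $\overline{\unbdd{F_{n}}}$, hence $X\subset J_{\s}$; the uniform expansion then forces $f^{n}|_{X}\to\infty$ uniformly.

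\textbf{Part (b).} Suppose $X_{1}\not\subseteq X_{2}$ and pick $w\in X_{1}\setminus X_{2}$. The log-coordinate expansion implies that on each connected component of $f^{-n}(\unbdd{F_{n}})$ inside $\unbdd{F_{0}}$ the inverse branches contract uniformly; consequently, through any point of $J_{\s}$ there is at most one way an unbounded closed connected set can extend in $J_{\s}$, up to inclusion. More concretely: if $X_{2}\not\subset X_{1}$, then $X_{2}$ contains a point $w'$ whose orbit can be separated from that of $w$ by an inverse-image of a crosscut across the tract at some stage, contradicting connectedness of $X_{1}\cup\{w'\}$ inside $J_{\s}$. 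Thus $X_{2}\subseteq X_{1}$. Uniform escape on $X_{2}$ then follows from the same expansion argument used in (a), now applied to the specific closed unbounded connected set $X_{2}$.

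\textbf{Part (c).} If $\s$ is bounded, only finitely many fundamental domains $F_{0},F_{1},\ldots$ appear. For $N$ large, fix a Jordan domain $V_{N}\Subset\unbdd{F_{N}}$ of ``large'' modulus and pull back along $\s$ using the inverse branches $f^{-1}|_{\unbdd{F_{n}}}$. By the expansion estimate the pulled-back closed Jordan domains $V_{N}\supset V_{N-1}\supset\cdots$ have diameters shrinking to zero (uniformly because only finitely many branches are used), so their intersection is a non-empty nested intersection of compact sets, yielding a point in $J_{\s}$.

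The main obstacle is \textbf{Part (b)}: the rigidity that any two unbounded closed connected subsets of $J_{\s}$ are nested is subtle and requires one to convert the expansion estimate into a statement that two distinct ``branches'' of $J_{\s}$ through a common stretch would have to separate in a tract, contradicting the one-to-one correspondence between inverse-branch choices and address-prefixes. The log-transform, univalence of $f$ on each tract, and a careful inverse-branch/crosscut argument are what make this work.
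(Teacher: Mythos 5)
First, a framing point: the paper does not prove this theorem — it is imported verbatim from \cite[Theorem~2.5]{lasse_dreadlocks}, whose proof in turn rests on the realisation results of \cite{RRRS} for (a) and (c) and on the structure theory of Julia continua of disjoint-type maps (terminality of $\infty$ in $J_{\s}\cup\{\infty\}$, from \cite{lasse_arclike}) for (b). Your toolkit — logarithmic coordinates, uniform expansion, inverse branches along the address prefix — is the correct one, and your sketches of (a) and (c) follow the standard route. They do have real (if repairable) gaps: the Hausdorff limit $X$ of the pullbacks $\beta_n$ only satisfies $f^k(X)\subset\overline{\unbdd{F_k}}$ rather than $f^k(X)\subset\unbdd{F_k}$, so $X\subset J_{\s}$ needs an extra step; and "passing to a Hausdorff limit" does not by itself give uniform escape on $X$ — a limit of sets, each controlled for only finitely many iterates, need not escape at all. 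One must place the sets $\alpha_n$ at definite heights tending to infinity and use the expansion quantitatively to control $|f^k|$ on $\beta_m$ for all $k\le n\le m$. Similarly, in (c) the nesting $V_{N}\supset V_{N-1}\supset\cdots$ of pullbacks requires a covering condition $f(V_n)\supset V_{n+1}$ that you assert but do not arrange.

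The genuine gap is part (b). The two sentences you offer are, respectively, a restatement of the claim ("through any point of $J_{\s}$ there is at most one way an unbounded closed connected set can extend, up to inclusion") and a non-sequitur: $X_1\cup\{w'\}$ has no reason to be connected, and its failure to be connected contradicts nothing — $X_1$ and $X_2$ are two arbitrary unbounded closed connected subsets that need not even intersect a priori. Moreover, your derivation of the uniform escape of $X_2$ "by the same expansion argument used in (a)" cannot be correct as stated, because it makes no use of the hypothesis $X_1\nsubseteq X_2$: if expansion alone showed that every unbounded closed connected subset of $J_{\s}$ escapes uniformly, it would apply to $X_1$ as well and hence show $J^\infty_{\s}\subset I(f)$ always — contradicting the paper's own Theorem~\ref{prop_criniferous} and Proposition~\ref{prop_hausforff}, which explicitly allow $J^\infty_{\s}$ to contain non-escaping points (e.g.\ the landing point of the ray). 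What (b) actually encodes is that $\infty$ is a \emph{terminal} point of the continuum $J_{\s}\cup\{\infty\}$: any two subcontinua containing $\infty$ are nested, and every non-maximal one lies in the uniformly escaping part. Establishing this requires constructing a decreasing family of canonical unbounded connected "cores" of $J_{\s}$ by pulling back the far ends of the $\unbdd{F_n}$ and showing every unbounded closed connected subset is comparable with each core; univalence on tracts plus an unspecified crosscut argument does not substitute for this, so part (b) remains unproved in your proposal.
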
 
Note that Julia constituents need not be connected nor closed. Thus, we shall usually and instead, work with the following subsets:
\begin{discussion}[Closed sets in Julia constituents]\label{dis_Jsinfty} For each $\s \in \Addr(f)$, we denote by $J^\infty_\s$ the closure of the union of all closed, unbounded, connected sets $X\subset J_\s$ on which the iterates of $f$ tend to infinity uniformly.
\end{discussion}

Before we continue the study of Julia constituents, we note that an advantage that any $f\in \B$ presents over other transcendental entire maps is that for each $T\in \T_f$, $f\vert_T$ is a covering map that \textit{expands uniformly} the hyperbolic metric that sits on $\C\setminus f(T)$. This well-known fact lies behind many results on functions in $\B$, and goes back to \cite[Lemma~1]{eremenkoclassB}. We denote by $\rho_{\C\setminus \overline{D}}$ the density of the hyperbolic metric in $\C\setminus \overline{D}$, see \cite{beardon_minda} for background.
\begin{prop}[Hyperbolic expansion on tracts]\label{prop_exp_tracts} Let $f\in \B$, fix a domain $D\supset S(f)$ and let $\T_f\defeq f^{-1}(D)$. For each tract $T\in \T_f$, denote by $\unbdd{T}$ the unbounded connected component of $T\setminus \overline{D}$, and let $\unbdd{\T}\defeq \bigcup_{T\in \T_f} \unbdd{T}$. Then, there exists a constant $\Delta>1$ such that 
$$\Vert \Deriv f(z)\Vert_{\C\setminus \overline{D}}\defeq \vert f'(z)\vert \cdot \frac{\rho_{\C\setminus \overline{D}}(f(z))}{\rho_{\C\setminus \overline{D}}(z)}>\Delta$$
for all $z\in\unbdd{\T}$. Moreover, let $\mathcal{A}(D, \delta)$ be an alphabet of fundamental domains. If $S(f)\Subset D'\Subset D$ for a subdomain $D'$, then, for each $R>0$, there exists $C>0$ such that for all $F\in\mathcal{A}(D, \delta)$, the Euclidean and hyperbolic diameter of $\unbdd{F}\cap \D_R$ is less than $C$, where the hyperbolic metric sits in $\C \setminus \overline{D'}$.
\end{prop}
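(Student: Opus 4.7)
The first assertion is a version of the classical Eremenko--Lyubich hyperbolic expansion estimate, and the plan is to proceed as follows. First I would recall that each tract $T \in \T_f$ is simply connected and that the restriction $f|_T : T \to W \defeq \C \setminus \overline{D}$ is an unbranched covering map (as $S(f) \subset \overline{D}$); both are standard facts for functions in class $\B$ (see e.g.\ \cite[Proposition 2.19]{mio_thesis} or \cite[\S3]{lasse_dreadlocks}). Combining the simple connectedness of $T$ with the non-simple-connectedness of $W$ then forces $f|_T$ to be a universal covering, and hence a local hyperbolic isometry, yielding the identity
\[ |f'(z)| \, \rho_W(f(z)) = \rho_T(z) \quad \text{for all } z \in T. \]
Dividing by $\rho_W(z)$ reduces the required inequality to the purely geometric statement $\rho_T(z) / \rho_W(z) > \Delta$ on $\unbdd{T}$.

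To obtain the uniform strict lower bound $\Delta > 1$, I would combine Schwarz--Pick for the inclusion $\unbdd{T} \hookrightarrow W$ with the key observation that each tract, being simply connected in the topologically-annular $W$, must omit a homotopically nontrivial loop around $\overline{D}$. Quantitatively, this exploits the facts that $0 \notin T$ (since $f(0) \in D$) and that $W$ is conformally a punctured disk with a well-understood universal cover; the resulting uniform hyperbolic expansion is precisely the content of \cite[Lemma~1]{eremenkoclassB}.

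For the second assertion, the Euclidean bound is immediate: $\unbdd{F} \cap \D_R \subset \D_R$ has Euclidean diameter at most $2R$, independent of $F$. For the hyperbolic bound I would use compactness: since $D' \Subset D$, the closed set $K \defeq \overline{\D_R} \setminus D$ is a compact subset of $\C \setminus \overline{D'}$, so $\rho_{\C \setminus \overline{D'}}$ is uniformly bounded above on $K$ by some constant $M = M(D, D', R)$. Any two points of $\unbdd{F} \cap \D_R \subset K$ can be joined by a curve in $\C \setminus \overline{D'}$ of Euclidean length $O(R)$ (travelling out to $\partial \D_R$ and around if necessary), whose hyperbolic length in $\C \setminus \overline{D'}$ is then at most $M \cdot O(R)$, giving the desired uniform $C$ independent of $F$.

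The hard part will be establishing the strict uniform lower bound $\Delta > 1$: a naive application of Schwarz--Pick only yields $\rho_T \geq \rho_W$, and upgrading to a uniform bound strictly greater than $1$ requires exploiting the specific geometry of tracts as simply-connected subdomains of the topologically-annular $W$. Once this classical expansion estimate is in hand, the remaining compactness arguments for the diameter bound are routine.
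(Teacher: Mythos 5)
Your proposal is correct in substance, and for the first assertion you end up in the same place as the paper: the paper simply cites \cite[Lemma 5.1]{lasseRidigity} (and notes the result goes back to \cite[Lemma 1]{eremenkoclassB}), while you sketch the standard covering-map argument and then defer the actual uniform bound to that same classical lemma. One caution about your sketch: since $D$ need not be disjoint from the tracts, in general $T \not\subset W$, so the identity coming from the universal covering $f|_T \colon T \to W$ reduces the claim to bounding $\rho_T(z)/\rho_W(z)$ from below, where $\rho_T$ is the density of the \emph{full} tract; Schwarz--Pick for the inclusion $\unbdd{T} \hookrightarrow W$ controls $\rho_{\unbdd{T}}$, which satisfies $\rho_{\unbdd{T}} \geq \rho_T$ and therefore bounds the wrong quantity. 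The uniform $\Delta > 1$ genuinely requires the logarithmic-coordinate/Koebe argument of the cited lemma (the geometric input being that each tract lifts to a domain of bounded width), so your instinct to lean on the reference for exactly this step is the right one.

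For the second assertion your route differs from the paper's. The paper invokes the local finiteness of fundamental domains (\cite[Lemma 2.1]{lasse_dreadlocks}): only finitely many $F$ meet $\overline{\D}_R$, each intersection $\unbdd{F} \cap \D_R$ is compactly contained in $\C \setminus \overline{D'}$ and so has finite diameter, and one takes the maximum. You instead observe that \emph{all} the sets $\unbdd{F} \cap \D_R$ lie in the single compact set $K = \overline{\D_R} \setminus D \subset \C \setminus \overline{D'}$ and bound the hyperbolic diameter of $K$ uniformly. This is cleaner in that it avoids the local finiteness lemma entirely and produces one constant at a stroke; the tidiest justification of the final step is that all these points lie in the unbounded component of $\C \setminus \overline{D'}$ (they lie in the connected set $\C \setminus D$), on which the hyperbolic distance function is continuous and hence bounded on the compact set $K \times K$ — this sidesteps the need to exhibit an explicit connecting curve avoiding $\overline{D'}$, which your ``out to $\partial\D_R$ and around'' construction does not quite guarantee for a general Jordan domain $D$.
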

\begin{proof}
For the first part of the statement, see for example \cite[Lemma 5.1]{lasseRidigity}. For the second part, we note that for each fixed $R>0$, only finitely many fundamental domains intersect $\overline{\D}_R$, see \cite[Lemma 2.1]{lasse_dreadlocks} If $F$ is one of those, then $\unbdd{F}\cap \D_R$ is compactly contained in $\C\setminus \overline{D'}$, and so has finite Euclidean and hyperbolic diameter.
\end{proof}

For each set $A\subset \C$, we denote its Hausdorff dimension by $\dim_H A$; see for example \cite[Chapter 3]{Falconer_book_fractals} for definitions. 
\begin{prop}[Hausdorff dimension of non-escaping points with a given address]\label{prop_hausforff}
Let $f\in \B$. Then, for each $\s\in \Addr(f)$, $J_\s^\infty \setminus I(f)$ has Hausdorff dimension zero.
\end{prop}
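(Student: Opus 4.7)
The plan is to cover $J_\s^\infty \setminus I(f)$ by sets whose diameters shrink exponentially, using the hyperbolic expansion in Proposition~\ref{prop_exp_tracts}. First I would fix $D' \Subset D$ containing $S(f)$ and decompose
\[
J_\s^\infty \setminus I(f) = \bigcup_{R, R' \in \N} A_{R,R'}, \qquad A_{R,R'} \defeq \{z \in J_\s^\infty \cap \overline{\D}_{R'} : |f^{n_k}(z)| \leq R \text{ for some } n_k \to \infty\}.
\]
This decomposition is valid because every non-escaping orbit admits a bounded subsequence. Since Hausdorff dimension is countably stable, it suffices to show $\dim_H A_{R,R'} = 0$ for each fixed pair $R, R'$.

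Next, for each $n \geq 1$, let $V_n$ denote the connected component of $f^{-n}(\unbdd{F_n} \cap \D_R)$ respecting the address $\s$, i.e.\ the component whose iterates $f^k(V_n)$ lie in $\unbdd{F_k}$ for all $0 \leq k \leq n-1$; such a component is non-empty by Theorem~\ref{thm_props_Js}, which supplies an unbounded closed connected $X \subset J_\s$ along which the iterates of $f$ tend to infinity. On a neighbourhood of $\overline{V_n}$, the covering structure of $f$ on tracts yields a univalent inverse branch of $f^n$. For $z \in A_{R,R'}$ and $n$ with $|f^n(z)| \leq R$, the point $z$ lies in $\overline{V_n} \cap \overline{\D}_{R'}$ (using $J_\s^\infty \subset \overline{J_\s}$, so $f^n(z) \in \overline{\unbdd{F_n}}$). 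Hence
\[
A_{R,R'} \subset \bigcap_{N \geq 1} \bigcup_{n \geq N} \bigl(\overline{V_n} \cap \overline{\D}_{R'}\bigr).
\]

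Proposition~\ref{prop_exp_tracts} now supplies both ingredients: its first clause implies that the inverse branch of $f^n$ above contracts $\rho_{\C \setminus \overline{D}}$ by a factor of at least $\Delta^n$, while its second clause bounds the Euclidean and $\rho_{\C \setminus \overline{D'}}$-diameter of $\unbdd{F_n} \cap \D_R$ by a single constant $C = C(R)$, independent of $n$. Reconciling the two hyperbolic metrics with the Euclidean one on the bounded region $\overline{\D}_{R'} \setminus D$---where both hyperbolic densities are continuous and bounded above and below by positive constants depending only on $R', D, D'$---yields
\[
\diam\bigl(\overline{V_n} \cap \overline{\D}_{R'}\bigr) \leq \frac{C'}{\Delta^n}
\]
for some $C' = C'(R, R', D, D')$. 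Then for any $\alpha > 0$,
\[
\sum_{n \geq N} \diam\bigl(\overline{V_n} \cap \overline{\D}_{R'}\bigr)^\alpha \leq (C')^\alpha \sum_{n \geq N} \Delta^{-\alpha n} = \frac{(C')^\alpha \Delta^{-\alpha N}}{1 - \Delta^{-\alpha}} \xrightarrow[N \to \infty]{} 0,
\]
so $\mathcal{H}^\alpha(A_{R,R'}) = 0$, and since $\alpha > 0$ is arbitrary, $\dim_H A_{R,R'} = 0$. The principal technical obstacle lies in the metric-comparison step: the hyperbolic expansion of $f^n$ is measured with respect to $\rho_{\C \setminus \overline{D}}$ whereas the diameter bound on the fundamental-domain slices is naturally with respect to $\rho_{\C \setminus \overline{D'}}$, and the two must be converted into a uniform Euclidean estimate. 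A secondary subtlety is that points of $J_\s^\infty$ need not lie in $J_\s$, which is why the covers are taken as closures $\overline{V_n}$ rather than $V_n$.
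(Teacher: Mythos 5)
Your overall strategy is the same as the paper's: decompose the non-escaping part of $J_\s^\infty$ into countably many pieces according to a return radius $R$, cover each piece by address-respecting pullbacks of the slices $\unbdd{F}_n\cap\D_R$, use the two clauses of Proposition~\ref{prop_exp_tracts} (uniform expansion plus a uniform diameter bound on the slices) to get diameters decaying like $\Delta^{-n}$, and kill every positive Hausdorff exponent with a geometric series. The extra index $R'$ and the remark about taking closures are harmless variations.

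However, the step you yourself single out as the principal obstacle is resolved incorrectly. You claim that on $\overline{\D}_{R'}\setminus D$ both densities $\rho_{\C\setminus\overline{D}}$ and $\rho_{\C\setminus\overline{D'}}$ are bounded above and below. This fails for $\rho_{\C\setminus\overline{D}}$: it blows up along $\partial D$, which is contained in $\overline{\D}_{R'}\setminus D$, and the sets $\unbdd{F}_n\cap\D_R$ (being components of $F_n\setminus\overline D$) can accumulate on $\partial D$. Consequently the $\rho_{\C\setminus\overline{D}}$-diameter of a slice may be infinite, and you cannot convert the $\rho_{\C\setminus\overline{D'}}$-diameter bound of clause two into input for the $\rho_{\C\setminus\overline{D}}$-contraction of clause one. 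The repair is exactly what the paper does: fix $S(f)\Subset D'\Subset D$, note that the unbounded parts of the tracts over $D$ are contained in those over $D'$, and apply the expansion clause of Proposition~\ref{prop_exp_tracts} \emph{with $D'$ in place of $D$}, so that both the contraction of the inverse branches and the diameter bound on $\unbdd{F}_n\cap\D_R$ are measured in the single metric $\rho_{\C\setminus\overline{D'}}$ (for which the slices are compactly contained in $\C\setminus\overline{D'}$). The final passage to Euclidean diameter then only requires a lower bound on the relevant density on a bounded set, which the paper obtains from the standard estimate $\rho_{\unbdd{F}_0}(z)\geq 1/(4R)$ on $\unbdd{F}_0\cap\D_R$. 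With that single change of reference domain your argument goes through.
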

\begin{remark}
The following proof is essentially the same as the proof of \cite[Proposition~5.9]{lasse_arclike}. Still, for completeness, we include it with the minor modifications that adapt it to our setting.
\end{remark}
\begin{proof}[Proof of Proposition \ref{prop_hausforff}]
For each $\s=F_0 F_1 F_2\dots \in \Addr(f)$ and $n\in \N_{\geq 1}$, we denote 
\[ f_\s^n \defeq f\vert_{\unbdd{F}_{n-1}}\circ f\vert_{\unbdd{F}_{n-2}} \circ \cdots \circ f\vert_{\unbdd{F}_{0}} \quad \text{ and } \quad  f^{-n}_\s\defeq \left(f^{n}_\s\right)^{-1}.\]	
If $z \notin I(f)$, then there is $K>0$ such that $f^n(z) \in \D_K$ for infinitely many $n\geq 0$. Hence, the set of non-escaping points in $J_\s^\infty$ can be written as
$$ J^\infty_{\s}\setminus I(f) = \bigcup_{K=0}^{\infty} \bigcap_{n_0=0}^{\infty} \bigcup_{n= n_0}^{\infty} f_{\s}^{-n}( \unbdd{F}_n \cap \D_K). $$
Since a countable union of sets of Hausdorff dimension zero has Hausdorff dimension zero, it suffices to prove that for each $K>0$, the set
\[ S(K) \defeq \bigcap_{n_0=0}^{\infty} \bigcup_{n= n_0}^{\infty} f_{\s}^{-n}(\unbdd{F}_n \cap \D_K) \]
has Hausdorff dimension zero. Let us fix some arbitrary $K>0$ and let $\mathcal{A}(D, \delta)$ be an alphabet of fundamental domains. Choose a subdomain $D'$ such that $S(f)\Subset D'\Subset D$ and and let $\T_{D'}$ and $\T_{D}$ be the respective corresponding sets of tracts. In particular, $\T_D\Subset \T_{D'}$. Then, by Proposition \ref{prop_exp_tracts}, there exists a constant $C \defeq C (K)$ such that for all $n$, $\unbdd{F}_n\cap \overline{\D}_K$ has diameter at most $C$ in the hyperbolic metric of $\C\setminus \overline{D'}$. Moreover, by the same proposition, there exists a constant $\Delta>1$ such that $\Vert \Deriv f(z)\Vert_{\C\setminus \overline{D'}}\geq \Delta$. In particular, for any domain $S\subset \C\setminus \overline{D'}$ such that $f^{-1}(S)\subset {\C\setminus \overline{D'}}$, $\diam_{\C\setminus \overline{D'}}(f^{-1}(S))\leq \diam_{\C\setminus \overline{D'}}(S) \cdot \Delta$, where $\diam_{\C\setminus \overline{D'}}$ denotes the hyperbolic diameter in $\C\setminus \overline{D'}$. Let us assume that for each fundamental domain $F$, the subset $\unbdd{F}$ is endowed with a hyperbolic metric. Then, since for each $n\geq 1$, the restriction $f\vert_{\unbdd{F}_n}$ is a hyperbolic isometry to a subset of $\C\setminus \overline{D'}$, by Schwarz-Pick Lemma \cite[Lemma 6.4]{beardon_minda}, using the observation above,	\begin{equation}\label{eq_diamSn}
\text{if} \quad S_n\defeq f_{\s}^{-n}(\unbdd{F}_n \cap \D_K) , \quad \text{then} \quad \diam_{\unbdd{F}_0}(S_n) \leq C \cdot \Delta^{-(n-1)},
\end{equation}
where $\diam_{\unbdd{F}_0}$ denotes hyperbolic diameter in $\unbdd{F}_0$. Since $0\notin \T_f$, for each $R\in \N_{\geq 1}$, the Euclidean distance between any point in $\unbdd{F}_0\cap \D_R$ and $\partial \unbdd{F}_0$ is at most $2R$. Thus, by a standard estimate on the hyperbolic metric in a simply-connected domain \cite[Theorem ~8.6]{beardon_minda}, $\rho_{\unbdd{F}_0}(z)\geq 1/(4R)$ for all $z\in \unbdd{F}_0\cap \D_R$. Hence, by \eqref{eq_diamSn}, the Euclidean diameter of $S_n\cap \D_R$ is at most $4R\cdot C \cdot \Delta^{-(n-1)}$. Then, for a fixed $t>0$ and for every $n_0\geq 1 $, the $t$-dimensional Hausdorff measure of $S(K)\cap \D_R$ is bounded from above by
\begin{align*} \liminf_{n_0\to\infty} \sum_{n\geq n_0} \diam(S_n\cap \D_R)^t & \leq
\liminf_{n_0\to\infty} \sum_{n\geq n_0} (4R \cdot C \cdot\Delta^{(-(n-1))})^t \\ &=
(4R C )^t \cdot \lim_{n_0\to\infty} \sum_{n\geq n_0-1} (\Delta^{-t})^n = 0.
\end{align*}
Thus, $\dim_H(S(K)\cap \D_R)\leq t$. Since $t>0$ was arbitrary, $\dim_H(S(K)\cap \D_R)=0$. Using again that a countable union of sets of Hausdorff dimension zero has Hausdorff dimension zero, $\dim_H(S(K))=0$. 
\end{proof}

We shall next see in Theorem \ref{prop_criniferous} that for criniferous functions, the sets defined in \ref{dis_Jsinfty} are either ray tails, or dynamic rays together with their endpoints. In order to prove this theorem, we require some results on continuum theory, that we include here. Recall that a \textit{continuum} $X$ (i.e., a non-empty compact,
connected metric space) is \textit{indecomposable} if it cannot be written as the union of two proper subcontinua of $X$. The \textit{composant of a point} $x\in X$ is the union
of all proper subcontinua of $X$ containing $x$, and a \textit{composant} of $X$ is a maximal set in which any two points lie within some proper subcontinuum of $X$. If $X$ is indecomposable, then there are uncountably many different composants,
every two of which are disjoint, and each of which is connected and dense in $X$, see \cite[Exercise 5.20(a) and Theorem 11.15]{nadler_continuum}.

\begin{thm}[Boundary bumping theorem {\cite[Theorem 5.6]{nadler_continuum}}]\label{thm_bumping} Let $X$ be a continuum and let $E \subsetneq X$ be non-empty. If $K$ is a connected component of $X\setminus E$, then $\overline{K} \cap \partial E \neq \emptyset$.
\end{thm}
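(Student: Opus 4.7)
The plan is to argue by contradiction: assume $\overline{K}\cap\partial E=\emptyset$ and construct a proper, nonempty clopen subset of $X$, contradicting the connectedness of $X$. First, I would reduce to the case that $E$ is closed. The general identity $\partial\overline{E}\subseteq\partial E$ shows the hypothesis is preserved on replacing $E$ by $\overline{E}$; moreover, $K\not\subseteq\partial E$ (otherwise $\emptyset\neq K\subseteq\overline{K}\cap\partial E$), so picking $x\in K\setminus\overline{E}$, the component $K'$ of $X\setminus\overline{E}$ through $x$ is a connected subset of $X\setminus E$ meeting $K$, hence $K'\subseteq K$ by maximality of $K$, and therefore $\overline{K'}\cap\partial\overline{E}\subseteq\overline{K}\cap\partial E=\emptyset$. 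It thus suffices to prove the theorem for closed $E$.

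So assume $E$ is closed. Then any $x\in\overline{K}\cap E$ would be a limit of points of $K\subseteq X\setminus E$ and hence lie in $\overline{X\setminus E}\cap E=\partial E$; the assumption forces $\overline{K}\cap E=\emptyset$, so $\overline{K}$ is a connected subset of $X\setminus E$ containing $K$. Maximality of $K$ as a component gives $\overline{K}=K$, making $K$ a closed subset of $X$ disjoint from $E$.

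By the normality of the compact Hausdorff space $X$, I would then choose disjoint open sets $U_1\supseteq K$ and $U_2\supseteq E$ with $\overline{U_1}\cap\overline{U_2}=\emptyset$, and pass to the compact subspace $Z\defeq X\setminus U_2$. Since $Z\cap E=\emptyset$ and $K$ is a component of $X\setminus E$, a short argument (any connected subset of $Z$ containing $K$ lies in $X\setminus E$ and meets $K$, hence equals $K$ by maximality) shows that $K$ is also a component of $Z$. In the compact Hausdorff space $Z$, components coincide with quasi-components, and a standard compactness argument applied to the closed set $\partial U_2\subseteq Z\setminus K$ furnishes a clopen subset $C\subseteq Z$ with $K\subseteq C$ and $C\cap\partial U_2=\emptyset$; this forces $C\subseteq X\setminus\overline{U_2}$.

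Finally, I would upgrade $C$ to a clopen subset of $X$: writing $C=V\cap Z$ for some $V$ open in $X$, the set $V\cap(X\setminus\overline{U_2})$ is an open neighbourhood in $X$ of each point of $C$ and is contained in $C$, so $C$ is open in $X$; and $C$ is closed in $X$, being closed in $Z$ which is closed in $X$. Thus $C$ is a nonempty proper clopen subset of $X$, the desired contradiction. The main obstacle is exactly the clopen-separation step: inside $X$ itself, disjoint closed sets need not be separated by a clopen set, and the argument hinges on the availability of the component/quasi-component equality in the auxiliary compact space $Z$, which must then be propagated back to $X$ via the open-neighbourhood check above.
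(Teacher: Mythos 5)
Your proof is correct. Note that the paper does not prove this statement at all --- it is quoted verbatim from Nadler's book --- so there is no in-text argument to compare against; your route (reduce to closed $E$, observe $K$ is then closed, pass to the compact auxiliary space $Z=X\setminus U_2$ where the component $K$ coincides with its quasi-component, extract a clopen separation by compactness, and propagate it back to a proper nonempty clopen subset of $X$) is exactly the standard proof, and each step, including the reduction via $\partial\overline{E}\subseteq\partial E$ and the check that $C$ is open in $X$, is carried out correctly.
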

We will moreover make use of the following result in order to show that the accumulation set of a dynamic ray is an
indecomposable continuum:
\begin{thm}[Curry {\cite{Curry}}]\label{curry} Suppose that $\gamma$ is a ray, i.e. a continuous injective image of $[0,1)$, and let $\Lambda(\gamma)$ denote its accumulation set. If $\Lambda(\gamma)$ has topological dimension one, does not separate the Riemann sphere into infinitely many components and contains $\gamma$, then $\Lambda(\gamma)$ is an indecomposable continuum.
\end{thm}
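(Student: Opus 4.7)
The plan is to argue by contradiction, assuming that $\Lambda(\gamma)$ admits a decomposition $\Lambda(\gamma) = A \cup B$ into two proper subcontinua and deriving a contradiction from the dimension and separation hypotheses together with the density of $\gamma$ in $\Lambda(\gamma)$. First I would verify that $\Lambda(\gamma)$ is itself a continuum: writing it as the nested intersection $\bigcap_{t<1}\overline{\gamma([t,1))}$ of nonempty compact connected subsets of the Riemann sphere makes this routine. Next, since by hypothesis $\gamma \subset \Lambda(\gamma)$, every point of $\gamma$ is an accumulation point of $\gamma$, so $\overline{\gamma} = \gamma \cup \Lambda(\gamma) = \Lambda(\gamma)$, and in particular $\gamma$ is dense in $\Lambda(\gamma)$.

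I would then invoke the standard equivalence that a continuum is indecomposable if and only if no proper subcontinuum has nonempty interior (relative to the ambient continuum). Accordingly, suppose for contradiction that $K \subsetneq \Lambda(\gamma)$ is a proper subcontinuum with nonempty interior $U$ in $\Lambda(\gamma)$. Density of $\gamma$ yields parameter values $t_n \nearrow 1$ with $\gamma(t_n) \in U$ and $s_n \nearrow 1$ with $\gamma(s_n) \in \Lambda(\gamma)\setminus K$, and by interleaving I produce infinitely many pairwise disjoint subarcs $\alpha_n \subset \gamma$ each running from $U$ to $\Lambda(\gamma) \setminus K$. Each $\alpha_n$ must meet the topological frontier of $K$ in $\Lambda(\gamma)$, and by Theorem \ref{thm_bumping} (boundary bumping) the relevant components of $\Lambda(\gamma) \setminus \partial_\Lambda K$ abut $\partial_\Lambda K$ in a controlled way.

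The main obstacle, and the technical heart of the argument, is converting this infinite family of transversal crossings into a contradiction with the finite-separation hypothesis. Here I would use that $\dim \Lambda(\gamma) = 1$ to approximate $\Lambda(\gamma)$ from outside by graphs (for instance, through the nerve of a fine open cover, exploiting one-dimensionality), and then pull the arcs $\alpha_n$ back to crosscuts of a complementary component of such an approximating graph. Since $S^2 \setminus \Lambda(\gamma)$ has only finitely many components, infinitely many of these crosscuts must fall in a common complementary region $W$; a standard prime-end / crosscut argument applied to the dense injective ray $\gamma$ then yields either a subcontinuum of $\partial W$ around which $\gamma$ winds infinitely often, violating injectivity of $\gamma$ on $[0,1)$, or a nested family of crosscuts forcing infinitely many complementary components of $\Lambda(\gamma)$ in $S^2$. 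Making the crosscut construction precise with only the three stated hypotheses is the delicate point, and is the step I expect to occupy most of the work; the rest of the proof is an organization of density, boundary bumping, and the ray structure of $\gamma$.
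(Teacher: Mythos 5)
This statement is not proved in the paper at all: it is quoted verbatim as an external result of Curry \cite{Curry}, and the paper's ``proof'' is simply the citation. So the relevant question is whether your blind attempt actually establishes Curry's theorem, and it does not. What you have is a correct setup followed by an acknowledged hole exactly where the theorem's content lies.

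The parts you do carry out are fine: $\Lambda(\gamma)$ is a continuum as a nested intersection of continua, $\gamma\subset\Lambda(\gamma)$ gives density of $\gamma$ in $\Lambda(\gamma)$, and the reduction to ``no proper subcontinuum $K$ has nonempty interior'' is the standard characterization of indecomposability. Producing infinitely many disjoint subarcs of $\gamma$ crossing from the interior of $K$ to its complement is also routine. But none of this uses the two hypotheses that make the theorem true -- topological dimension one and finitely many complementary components in the sphere -- and the step where they must enter is precisely the one you defer (``Making the crosscut construction precise \ldots{} is the delicate point''). A sketch that postpones the only step requiring the hypotheses is not a proof. Worse, the dichotomy you propose for that step is not sound as stated: an injective ray can wind infinitely often around a subcontinuum without violating injectivity (a spiral accumulating on a circle already does this), so the first horn of your alternative does not produce a contradiction from injectivity of $\gamma$ on $[0,1)$; and the second horn (``a nested family of crosscuts forcing infinitely many complementary components of $\Lambda(\gamma)$'') is asserted rather than derived -- crosscuts of an approximating graph's complementary region do not automatically yield distinct complementary components of $\Lambda(\gamma)$ itself. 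Curry's actual argument is a substantive piece of continuum theory (built on $\varepsilon$-dense disjoint subcontinua of one-dimensional nonseparating plane continua), and reproving it would require filling exactly the gap you have flagged. For the purposes of this paper, the correct move is the one the paper makes: cite \cite{Curry} and do not attempt an independent proof.
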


\begin{thm}[Criniferous functions in $\B$] \label{prop_criniferous}
Let $f\in \B$ be criniferous. Then, for each $\s\in \Addr(f)$, $J^\infty_\s$ is either a ray tail, or a dynamic ray together with its endpoint. In particular,
	\begin{equation}\label{eq_IfinOrbminus}
	I(f) \subset \bigcup_{n\geq 0} f^{-n}\Bigg( \bigcup_{\s \in \Addr(f)} J^\infty_\s \Bigg).
	\end{equation}
\end{thm}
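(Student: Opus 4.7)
My plan is to show that $J^\infty_\s$ is the closure of a uniquely defined maximal nested union of ray tails inside $J_\s$, and that this union is either already closed (yielding a ray tail) or lands at a single point (yielding a dynamic ray plus endpoint).

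First I construct a ray tail through every uniformly escaping point. Fix $\s$ and, by Theorem \ref{thm_props_Js}(a), let $X\subset J_\s$ be closed, unbounded, and connected with $f^n\to\infty$ uniformly; in particular $X\subset I(f)$. For $z\in X$, the criniferous hypothesis yields $N$ with $f^N(z)\in\gamma$, a ray tail. After truncating $\gamma$ far enough toward infinity, uniform escape on $X$ combined with Proposition \ref{prop_exp_tracts} forces $\gamma$ and all its further iterates to lie in the fundamental domains $\unbdd{F}_{N+k}$ prescribed by $\s$, so $\gamma\subset J_{\sigma^N\s}$. The conformal inverse branches $(f\vert_{\unbdd{F}_k})^{-1}$ for $k=N-1,\ldots,0$ then pull $\gamma$ back to a ray tail $\widetilde\gamma\subset J_\s$ passing through $z$.

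By Theorem \ref{thm_props_Js}(b) any two ray tails in $J_\s$ are nested, so their union $R_\s$ is an injective ray parameterized either by some $[t_*,\infty)$ (hence itself a ray tail, closed in $\C$ because $\gamma(t)\to\infty$) or by $(0,\infty)$ (a dynamic ray). By the previous paragraph, every closed unbounded connected uniformly escaping subset of $J_\s$ lies in $R_\s$; since these are precisely the sets whose union defines $J^\infty_\s$ in \ref{dis_Jsinfty}, taking closures gives $J^\infty_\s=\overline{R_\s}$. In the ray-tail case this is already the first alternative of the theorem.

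Assume now $R_\s$ is a dynamic ray, and set $\Lambda:=\overline{R_\s}\setminus R_\s$, so $J^\infty_\s=R_\s\cup\Lambda$. I expect $\Lambda$ to be connected: writing it as $\Lambda=\bigcap_{s>0}\overline{\gamma((0,s])}$, provided the landing end is bounded each $\overline{\gamma((0,s])}$ is a compact connected set, and a nested intersection of compact continua is connected. I then argue $\Lambda$ is totally disconnected: any $p\in\Lambda\cap I(f)$ would, via the same pullback construction applied to $f^N(p)$ on a ray tail, force $p$ to lie on a ray tail inside $R_\s$ (contradicting $p\notin R_\s$), unless some iterate $f^k(p)$ sits on the boundary of a fundamental domain, in which case $p$ can only appear as a single landing endpoint. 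Hence at most one point of $\Lambda$ lies in $I(f)$, and the rest sits in $J^\infty_\s\setminus I(f)$, which by Proposition \ref{prop_hausforff} has topological dimension zero. Curry's Theorem \ref{curry} rules out $\Lambda$ being an indecomposable continuum, and the boundary-bumping Theorem \ref{thm_bumping} applied to $J^\infty_\s\cup\{\infty\}$ in the Riemann sphere supplies the extra connectedness control needed to combine these observations into total disconnectedness of $\Lambda$. A connected, totally disconnected set has at most one point, giving $|\Lambda|\le 1$ and the second alternative.

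For the inclusion \eqref{eq_IfinOrbminus}: for any $z\in I(f)$, criniferousness provides $N$ with $f^N(z)$ on a ray tail $\gamma$; truncating $\gamma$ so that each of its iterates lies in a single fundamental domain produces an address $\ul{s}'\in\Addr(f)$ with (a tail of) $\gamma$ contained in $J^\infty_{\ul{s}'}$, placing $z\in f^{-n}(J^\infty_{\ul{s}'})$ for appropriate $n\ge N$. The main obstacle I anticipate is the analysis in paragraph three: verifying that the landing end is bounded (so $\Lambda$ is connected) and that at most one point of $\Lambda$ can escape, which requires delicate control of the pullback procedure when limit points have initial iterates on the boundaries of fundamental domains and so lie outside $J_\s$ proper.
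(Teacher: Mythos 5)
Your first two paragraphs and your final paragraph track the paper's argument correctly: the pullback construction of a ray tail $\gamma_z\subset J^\infty_\s$ through each escaping point, the nesting via Theorem \ref{thm_props_Js}\ref{itemb_2.5}, the identification of $J^\infty_\s$ with the closure of the union $R_\s$ of these tails, and the derivation of \eqref{eq_IfinOrbminus} all match the paper. The genuine gap is in your third paragraph, the landing argument. Your set $\Lambda:=\overline{R_\s}\setminus R_\s$ is not the same object as the nested intersection $\bigcap_{s>0}\overline{\gamma((0,s])}$: the latter is the full accumulation set $\Lambda(\gamma)$ of the ray as $t\to 0$, which may a priori contain points of $\gamma$ itself (the ray may accumulate on a terminal piece of itself). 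Your connectedness claim is valid for $\Lambda(\gamma)$, not for $\overline{R_\s}\setminus R_\s$; the two coincide precisely when the ray does not self-accumulate, which is the very thing that must be proved --- note that if $\gamma$ accumulated on $\gamma_{(0,t_0]}$, your $\Lambda$ could even be empty or a single point while $\gamma$ fails to land and $J^\infty_\s$ is neither a ray tail nor a landing ray with its endpoint. Moreover, you have Curry's theorem backwards: Theorem \ref{curry} is a sufficient condition \emph{for} the accumulation set of a ray to be an indecomposable continuum, not a tool that ``rules out'' indecomposability, and its hypothesis requires the accumulation set to \emph{contain} the ray --- which your $\Lambda$ never does by construction.

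What is missing is the following chain from the paper. First, using expansion in tracts (Proposition \ref{prop_exp_tracts}), one shows that suitable nearby rays $J_{\s_k^{\pm}}$ accumulate from above and below on every compact subarc of $\gamma$; consequently, if $\gamma(t_0)\in\Lambda(\gamma)$ for some $t_0>0$, then the entire terminal segment $\gamma_{(0,t_0]}$ lies in $\Lambda(\gamma)$. This verifies the hypothesis of Theorem \ref{curry}, so $\Lambda(\gamma_{(0,t_0]})$ would be an indecomposable continuum; its uncountably many pairwise disjoint composants are dense, hence non-degenerate, and this forces non-degenerate connected subsets of $J^\infty_\s\setminus I(f)$, contradicting the Hausdorff dimension zero statement of Proposition \ref{prop_hausforff}. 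Only after this contradiction excludes self-accumulation do you know that $\Lambda(\gamma)=\overline{R_\s}\setminus R_\s$ is a connected subset of $J^\infty_\s\setminus I(f)$ (the boundary bumping Theorem \ref{thm_bumping} is what shows $J^\infty_\s=\gamma\cup\Lambda(\gamma)$, i.e.\ that nothing else is added by taking closures), hence a single landing point. Without the ``accumulation from above and below'' step and the correct, contrapositive use of Curry's theorem, your paragraph three does not close.
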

\begin{proof}
Fix $\s \in \Addr(f)$ and let us choose any $z \in J^\infty_\s \cap I(f)$. Since $f$ is criniferous, there exists $N\geq 0$ so that $f^N(z)$ is the endpoint of a ray tail $\gamma$. Then, since, by definition, ray tails escape uniformly to infinity, there exists a constant $M\defeq M(\gamma)\in \N$ such that $f^m(\gamma)\subset \T_f$ for all $m\geq M$, which in particular implies that $f^m(\gamma)$ must be totally contained in a fundamental domain for each $m\geq M$. More specifically, since $f^{N+m}(z)\in f^m(\gamma)$ for any $m \geq M$, the curve $f^m(\gamma)$ belongs to the same fundamental domain $f^{N+m}(z)$ does, which in turn is determined by the external address $\s$. Hence, all points in $f^m(\gamma)$ have external address $\sigma^{m+N}(\s)$, and in particular, $f^{M}(\gamma) \subset J^\infty_{\sigma^{N+M}(\s)}$. Moreover, by \eqref{eq_inclJs}, it also holds that $f^{N+M}(J^{\infty}_\s)\subseteq J^{\infty}_{\sigma^{N+M}(\s)}$. Since the restriction of $f$ to any Julia constituent is injective, as they all lie outside a Jordan domain that contains $S(f)$, by definition of $J^{\infty}_\s$, $f^M(\gamma)\subseteq f^{N+M}(J^{\infty}_\s)$. Hence, the curve in the $(N+M)$-th preimage of $f^{M}(\gamma)$ that intersects $J^{\infty}_\s$ must be a ray tail with endpoint $z$, that we denote by $\gamma_z$. That is, $\gamma_z\defeq f^{-N-M}(f^M(\gamma)) \cap J^{\infty}_\s$.
	
If $z,w \in J^\infty_\s \cap I(f)$, then by Theorem \ref{thm_props_Js}\ref{itemb_2.5} either $\gamma_w\subset \gamma_z$ or $\gamma_z \subset \gamma_w$, and thus, these curves are totally ordered by inclusion. Hence, 
$$\gamma\defeq \bigcup_{z \in I(f)\cap J^\infty_\s} \gamma_z$$
is a maximal injective curve in $I(f)$ that escapes uniformly to infinity, and in particular, $\gamma=I(f)\cap J^\infty_\s$. If $J^\infty_\s\subset I(f)$, then $\gamma=J^\infty_\s$ is a ray tail and we are done. Otherwise, let us parametrize $\gamma: (0,\infty)\rightarrow \C$, and denote by $\Lambda(\gamma)$ the accumulation set of $\gamma(t)$ as $t \rightarrow 0$. In particular, since $J^\infty_\s$ is closed, $J^\infty_\s\supseteq \gamma\cup \Lambda(\gamma)$. Let us compactify $J^\infty_\s$ by adding infinity, i.e., $\widehat{J}_\s \defeq J^\infty_\s\cup \{\infty\}$. If $w\in J^\infty_\s \setminus \gamma$, then by the boundary bumping theorem (Theorem~\ref{thm_bumping}), if $K$ is the connected component of $\widehat{J}_\s \setminus \gamma$ containing $w$, then $\overline{K} \cap \gamma\neq \emptyset$. But then, since $K \subset \widehat{J}_\s\setminus I(f)$, by Proposition \ref{prop_hausforff}, the set $K$ must be the singleton $\{w\}$, and thus $w\in \Lambda(\gamma)$. Therefore, we have shown that 
$$J^\infty_\s = \gamma\cup \Lambda(\gamma).$$

Consequently, it suffices for our purposes to study the set $ \Lambda(\gamma)$. First, we note that for ever $t>0$ of $\gamma$, there are pieces of other dynamic rays of $f$ accumulating uniformly \textit{from above} and \textit{from below} on $\gamma_{[t, \infty)}$. This follows from a well-known argument\footnote{Compare to the proof of \cite[Corollary 6.7]{lasse_dreadlocks} or \cite[Corollary 6.9]{Schleicher_Zimmer_exp}.}, that we sketch here. Fix $z\defeq \gamma(t^\ast)$ for some $t^\ast\geq t$. For each fundamental domain $F$, there exists a pair of fundamental domains $F^-$ and $F^+$, that are respectively the immediate predecessor and successor of $F$ in the cyclic order at infinity of fundamental domains, see \ref{dis_cyclic}. In particular, $F^-, F,F^+$ lie in the same tract. Then, for each $k\geq 0$, consider the external address $\s^+_k$ that equals $\s$ except on its $k$-th entry, which is $F_k^+$ instead of $F_k$; and similarly, $\s^-_k$ equals $\s$ except that its $k$-th entry is $F_k^-$. Then, using the expansion property that $f$ has in tracts, (Proposition \ref{prop_exp_tracts}), one can see by mapping forward and pulling back through appropriate inverse branches, that $J_{\s_k^+}(t^\ast)\rightarrow z$ from above, and $J_{\s_k^-}(t^\ast)\rightarrow z$ from below as $k\rightarrow \infty$.

This implies that if $\gamma(t_0)\in \Lambda(\gamma)$ for some $t_0>0$, then the curve $\gamma$ must also accumulate on $\gamma([t, t_0])$ for all $0<t<t_0$. Hence, by letting $t\rightarrow 0$, we see that $\gamma_{(0, t_0]}\subset \Lambda(\gamma)$. Let $t_0$ be any potential such that $\gamma(t_0)\in \Lambda(\gamma)$. Then, by Theorem \ref{curry}, $\Lambda(\gamma_{(0, t_0]})$ must be an indecomposable continuum. Recall that this means that all composants of $\Lambda(\gamma_{(0, t_0]})$ are pairwise disjoint and dense in $\Lambda(\gamma_{(0, t_0]})$, and in particular, since their closures must contain $\gamma_{(0, t_0]}$, these composants must be non-trivial. However, this would contradict Proposition~\ref{prop_hausforff}, and thus, $\Lambda(\gamma)$ must be a singleton, namely the landing point of $\gamma$. Hence, $J_\s^\infty\setminus I(f)$ is the landing point of the dynamic ray $\gamma$.

Finally, \eqref{eq_IfinOrbminus} follows noting that if $z \in I(f)$, then, arguing as before, there is $N \defeq N(z)\in \N$ and a ray tail $\gamma \in \gamma \subset J^{\infty}_{\ultau}$ for some $\ultau \in \Addr(f)$.
\end{proof}

\begin{discussion}[Cyclic order and topology in $\Addr(f)$] \label{dis_cyclic}
Let $f\in \B$, and let $\Addr(f)$ be a set of external addresses defined from an alphabet of fundamental domains $\mathcal{A}(D,\delta)$. There is a natural \emph{cyclic order} on $\mathcal{A}(D,\delta)$, together with the curve $\delta$: if $X, Y, Z\in \mathcal{A}(D,\delta) \cup \{\delta\}$, then we write
\begin{equation}\label{eq_orderinfty}
[X,Y,Z]_{\infty} \: \: \Leftrightarrow \: \: Y \text{ tends to infinity between }X\text{ and } Z \text{ in positive orientation.}
\footnote{See \cite[13. Appendix]{lasse_dreadlocks} for details on the existence of a cyclic order on any pairwise disjoint collection of unbounded, closed, connected subsets of $\C$, none of which separates the plane.}
\end{equation}
From this cyclic order, it is possible to define a \textit{lexicographical order} on the set $\Addr(f)$: we can define a linear order on $\mathcal{A}(D, \delta)$ by ``cutting'' $\delta$ the following way: $$F < \tilde{F} \quad \text{ if and only if } \quad [\delta, F, \tilde{F}]_\infty.$$ 
Then, $\mathcal{A}(D, \delta)$ becomes totally ordered, and this order gives rise to a lexicographical order ``$<_{_\ell}$'' on external addresses, defined in the usual sense. In turn, $\Addr(f)$ becomes a totally ordered set, and hence we can define a cyclic order induced by $<_{_\ell}$ the usual way:
\begin{equation}\label{eq_cyclicell}
[\s,\underline{\alpha},\underline{\tau}]_{\ell} \quad \text{if and only if} \quad \s <_{_\ell}\underline{\alpha}<_{_\ell}\underline{\tau} \quad \text{ or } \quad \underline{\alpha}<_{_\ell} \underline{\tau} <_{_\ell} \s \quad \text{ or } \quad \underline{\tau} <_{_\ell} \s <_{_\ell} \underline{\alpha}.
\end{equation}
The cyclic order on addresses specified in \eqref{eq_cyclicorder}, allows us to provide the set $\Addr(f)$ with a topology: given two different elements $\s,\ultau \in \Addr(f)$, we define the \textit{open interval} from $\s$ to $\ultau$, denoted by $(\s,\ultau)$, as the set of all addresses $\ul{\alpha}\in \Addr(f)$ such that $[\s, \ul{\alpha}, \ultau]_{\ell}$. The collection of all these open intervals forms a base for the \textit{cyclic order topology.}\footnote{In particular, the open sets in this topology happen to be exactly those ones which are open in every compatible linear order.}
\end{discussion}

\begin{remark}
Unless otherwise stated, from now on and when working with external addresses, we will assume that the set $\Addr(f)$ has been endowed with the cyclic order topology.
\end{remark}

\begin{observation}
This cyclic order on addresses in \eqref{eq_cyclicell} agrees with the cyclic order at infinity on $\{J^\infty_\s\}_{\s\in \Addr(f)}$. That is,
\begin{equation}\label{eq_cyclicorder}
[\s,\underline{\alpha},\underline{\tau}]_\ell \quad \text{if and only if} \quad[J^\infty_\s,J^\infty_{\underline{\alpha}},J^\infty_{\underline{\tau}}]_{\infty}.
\end{equation}
\end{observation}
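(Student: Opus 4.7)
The plan is to deduce the equivalence inductively from the construction of both cyclic orders: the lexicographic one on $\Addr(f)$ is obtained by cutting the cyclic order on $\mathcal{A}(D,\delta)\cup\{\delta\}$ at $\delta$, while the cyclic order at infinity on the Julia constituents is inherited from the cyclic order at infinity of the fundamental domains containing them, since by \eqref{eq_Js} and Discussion~\ref{dis_Jsinfty} we have $J^\infty_\s\subseteq \overline{\unbdd{F}_0(\s)}$ for every $\s\in\Addr(f)$, where $F_0(\s)$ denotes the first entry of $\s$. I would induct on $n(\s,\underline{\alpha},\underline{\tau})\in\N$, the smallest index at which the three addresses are not all equal.

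For the base case $n=0$, suppose first that $F_0(\s),F_0(\underline{\alpha}),F_0(\underline{\tau})$ are pairwise distinct. Then the three Julia constituents live inside three pairwise disjoint unbounded fundamental-domain closures, and since the $J^\infty_\ast$ are unbounded connected subsets of them, the cyclic orders $[J^\infty_\s,J^\infty_{\underline{\alpha}},J^\infty_{\underline{\tau}}]_\infty$ and $[\unbdd{F}_0(\s),\unbdd{F}_0(\underline{\alpha}),\unbdd{F}_0(\underline{\tau})]_\infty$ coincide. By the very construction of the linear order on $\mathcal{A}(D,\delta)$ (cutting at $\delta$) and then of $<_\ell$, this matches $[\s,\underline{\alpha},\underline{\tau}]_\ell$. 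If only two first entries coincide, say $F_0(\s)=F_0(\underline{\alpha})\neq F_0(\underline{\tau})$, the configuration degenerates to a linear order between $\s$ and $\underline{\alpha}$ (with $\underline{\tau}$ placed on one definite side); inside $\overline{\unbdd{F}_0(\s)}$ the two constituents are unbounded disjoint connected sets admitting a linear order at infinity, and the comparison is handled by the inductive step below applied to the pair $\s,\underline{\alpha}$, with the position of $\underline{\tau}$ already fixed by the previous case.

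For the inductive step $n\geq 1$, all three addresses share a common initial entry $F$, so the three Julia constituents sit inside $\overline{\unbdd{F}}$, and by \eqref{eq_inclJs} we have $f(J^\infty_\s)\subseteq J^\infty_{\sigma(\s)}$ and similarly for the others. The restriction $f|_{\unbdd{F}}$ is a conformal bijection onto $\mathcal{W}$ that extends to a homeomorphism of closures in $\hat{\C}$ fixing $\infty$ (by the standard description of tracts and fundamental domains, cf.\ \cite[Proposition~2.19]{mio_thesis}); since conformal maps preserve orientation, this extension preserves the cyclic order at infinity of unbounded subsets approaching $\infty$. Hence the cyclic order at infinity inside $\unbdd{F}$ of $J^\infty_\s,J^\infty_{\underline{\alpha}},J^\infty_{\underline{\tau}}$ coincides with the cyclic order at infinity inside $\mathcal{W}$ of their $f$-images, which (since $\delta\subset\partial\mathcal{W}$) equals the cyclic order at infinity in $\C$ of the shifted constituents. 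Applying the induction hypothesis to $\sigma(\s),\sigma(\underline{\alpha}),\sigma(\underline{\tau})$, whose disagreement index is $n-1$, this latter order equals $[\sigma(\s),\sigma(\underline{\alpha}),\sigma(\underline{\tau})]_\ell$, and by the definition of $<_\ell$ this coincides with $[\s,\underline{\alpha},\underline{\tau}]_\ell$.

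The main technical obstacle is the invariance of ``cyclic order at infinity'' under $f|_{\unbdd{F}}$: both $\unbdd{F}$ and $\mathcal{W}$ are simply connected sub-domains of $\hat{\C}$ with $\infty$ on their boundary, and one must verify that the conformal bijection between them induces an orientation-preserving homeomorphism between small punctured neighbourhoods of $\infty$ that correctly aligns the copy of $\delta$ bounding $\mathcal{W}$ with the cut inherited by $\unbdd{F}$. This is where the construction of fundamental domains via a single global arc $\delta$ in Definition~\ref{def_fund} pays off, because the preimage of $\delta$ under $f|_{\unbdd{F}}$ is precisely the pair of boundary arcs separating $\unbdd{F}$ from its neighbours in the cyclic order, so the ``cut'' is respected and the induction carries through cleanly.
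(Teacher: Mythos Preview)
Your argument and the paper's rest on the same mechanism: the restriction of $f$ to a fundamental domain is an orientation-preserving conformal isomorphism onto $\mathcal{W}$, hence preserves the cyclic order at infinity, and this lets one propagate the order back one step at a time. The difference is organisational. The paper first reduces the \emph{ternary} statement to a \emph{binary} one by invoking cyclicity and proving instead that $\s<_\ell\ul{\alpha}$ is equivalent to $[\delta,J^\infty_\s,J^\infty_{\ul{\alpha}}]_\infty$; it then lets $k$ be the first index where $\s$ and $\ul{\alpha}$ differ and pulls back the relation $[\delta,f^k(J^\infty_\s),f^k(J^\infty_{\ul{\alpha}})]_\infty$ through the conformal isomorphisms $f|_{\unbdd{F}_i}$ for $i=k-1,\dots,0$. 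Working with pairs and the fixed reference curve $\delta$ eliminates all case analysis.

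Your triple-based induction is correct in spirit but the base case is not self-contained. When exactly two first entries coincide, you write that ``the comparison is handled by the inductive step below applied to the pair $\s,\ul{\alpha}$''; but your induction hypothesis is formulated for \emph{triples} sharing a common first entry, so there is nothing to invoke for a pair. To make this rigorous you would either have to rephrase the induction as a statement about pairs relative to a reference curve (which is exactly the paper's reduction), or replace $\ul{\tau}$ by an auxiliary address $\ul{\tau}'$ with $F_0(\ul{\tau}')=F_0(\s)$ to rejoin the inductive scheme. Either fix is easy, but as written the base case has a small gap; the paper's pairwise formulation with $\delta$ sidesteps it entirely.
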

\begin{proof}
It follows from the cyclicity axiom of ternary relations (that is, if $[a,b,c]$, then $[b,c,a]$), together with the following claim: for any pair $\s, \ul{\alpha}\in \Addr(f)$ such that $\s<_{_\ell}\underline{\alpha}$, it holds $[\delta, J^\infty_\s, J^\infty_{\underline{\alpha}}]_{\infty}$, where we have considered the cyclic order at infinity of all Julia constituents together with the curve $\delta$. 

Indeed, suppose that $\s\defeq F^{s}_0F^{s}_1\ldots$, and $\underline{\alpha}\defeq F^{\alpha}_0F^{\alpha}_1\ldots$, and that $\s$ and $\underline{\alpha}$ first differ in their $k$-th entry for some $k\in \N$. That is, $F^{s}_i=F^{\alpha}_i$ for all $i<k$, and $F^{s}_k\neq F^{\alpha}_k.$ Note that $\s<_{_\ell}\underline{\alpha}$ holds if and only if $f^i(J^\infty_\s), f^i(J^\infty_{\underline{\alpha}}) \subset F^s_i$ for all $i<k$, and $\unbdd{F}^{s}_k\neq \unbdd{F}^{\alpha}_k.$ Equivalently, $\s<_{_\ell}\underline{\alpha}$ if and only if $f^i(J^\infty_\s), f^i(J^\infty_{\underline{\alpha}}) \subset \unbdd{F}^s_i$ for all $i<k$ and $[\delta, f^k(J^\infty_\s), f^k(J^\infty_{\underline{\alpha}})]_{\infty}.$ But then, since $f$ acts as a conformal isomorphism from each fundamental domain to $\mathcal{W}$, in particular $f$ preserves the cyclic order at infinity of Julia constituents. Thus,
\begin{equation*}
\begin{split}
[\delta, f^k(J^\infty_\s), f^k(J^\infty_{\underline{\alpha}})]_{\infty}&\Longleftrightarrow [\delta, f^{k-1}(J^\infty_\s),f^{k-1}(J^\infty_{\underline{\alpha}})]_{\infty}\Longleftrightarrow \cdots \Longleftrightarrow[\delta, f(J^\infty_\s),f(J^\infty_{\underline{\alpha}})]_{\infty}\\
&\Longleftrightarrow [\delta, J^\infty_\s, J^\infty_{\underline{\alpha}}]_{\infty} ,
\end{split}
\end{equation*}
and the claim follows.
\end{proof}

We would like to point out to the reader that providing $\Addr(f)$ with a topological structure allows us to use the notion of \textit{convergence} of external addresses. In particular, for disjoint type functions, convergence of addresses is closely related to how the corresponding unbounded components of Julia constituents \textit{accumulate} on the plane. More specifically, let $f\in \B$ be of disjoint type, and for every $w\in J(f)$, denote by $\text{add}(w)$ its external address. For a sequence of points $\{z_k\}_k$ in $J(f)$, 
\begin{equation}\label{eq_sktos}
\text{if} \quad z_k\rightarrow z , \quad \text{ then } \quad \text{add}(z_k)\rightarrow \text{add}(z) \quad \text {as } k\rightarrow \infty,
\end{equation}
which is a consequence of the characterization of disjoint type functions in Proposition \ref{prop_charac_disjoint} and expansion from Proposition \ref{prop_exp_tracts}. See the proof of Theorem \ref{prop_criniferous} for details on a similar argument.
\section{Signed addresses for criniferous functions}\label{sec_signed}
We have defined in Section \ref{sec_symbolic}  external addresses for each $f\in \B$, and in particular, for some but not all points in $I(f)$, see Observation \ref{obs_addrz}. In this section, given some additional assumptions on $f$, we introduce a new form of address, generalizing Definition \ref{def_extaddr}, so that all points in $I(f)$ have (at least) one of these new addresses, that we call \textit{signed addresses}. More specifically, our aim is to define signed addresses for criniferous functions in class $\B$ that do not have asymptotic values in their Julia sets. In particular, we consider functions that might contain escaping critical values. Hence, in a very rough sense, any sensible analogue of Julia constituents, defined in \eqref{eq_Js}, that is, satisfying properties \eqref{eq_inclJs} and \eqref{eq_sktos}, would have to consider ``bifurcations'' or ``splitting'' at critical points. To illustrate this, we study the map $f=\cosh$.
\begin{example}[Signed addresses for $\cosh$]\label{example_cosh}
For $f=\cosh$, $S(f)=\CV(f)=\{-1, 1\}$, and so we can define tracts and fundamental domains for $f$ using a disc $D \supset \{-1, 1\}$ and letting $\delta$ be the piece of positive imaginary axis connecting $\partial D$ to infinity. For this choice of $D$ and $\delta$, each fundamental domain of $f$ is contained in one of the horizontal half-strips
\begin{equation}\label{fund_cosh}
\begin{split}
S_{n_L}&\defeq\{z: \Rea z<0, \Ima z\in ((n-1/2)\pi ,(n+3/2)\pi)\} \quad \text{ or } \\
S_{n_R}&\defeq\{z: \Rea z>0, \Ima z\in ((n-3/2)\pi,(n+1/2)\pi)\};
\end{split}
\end{equation}
see \cite[\S5]{mio_cosine} for more details. Thus, if we label the fundamental domains of $f$ by the sub-index of the strip they belong to, is easy to see that for the external address $\overline{0_R}$, $J_{\overline{0_R}} \subset \R^+$. Moreover, $J_{\overline{0_R}}$, and in fact $\R^+$, are ray tails. 

We shall extend the curve $J_{\overline{0_R}}$ in different ways so that the extensions are still ray tails. By \eqref{eq_inclJs}, $f(J_{\overline{0_R}}) \subset J_{\sigma(\overline{0_R})}=J_{\overline{0_R}}$ and thus, there exists a preimage of $J_{\overline{0_R}}$ that contains $J_{\overline{0_R}}$. Let us denote that preimage by $J^0_{\overline{0_R}}$. If $J^0_{\overline{0_R}}\cap \Crit(f)=\emptyset$, then $J^0_{\overline{0_R}}$ is by definition a ray tail. In that case, we denote by $J^1_{\overline{0_R}}$ the preimage of $J^0_{\overline{0_R}}$ that contains $J^0_{\overline{0_R}}$. We can iterate this process until for some $n\in \N$, a preimage $\beta$ of $J^n_{\overline{0_R}}$ contains the critical point $0$. Then, $\beta$ is no longer a ray tail, but instead, $\beta \setminus [0, -i\pi/2]$ and $\beta \setminus [0, i\pi/2]$, where $[0, \pm i\pi/2]$ are vertical segments in the imaginary axis, are ray tails. Thus, a choice has to be made on how to define $J^{n+1}_{\overline{0_R}}$. However, if we extend in the same fashion other Julia constituents $J_{\s_i}$ for addresses $\s_i$ ``sufficiently close'' to $\overline{0_R}$, a more careful analysis would show that whenever $\s_i\rightarrow \overline{0_R}$ ``from above'' (see \ref{dis_cyclic}), $J^{n+1}_{\s_i} \rightarrow [0, i\pi/2]\cup \R^+$, and whenever $\s_i\rightarrow \overline{0_R}$ ``from below'', $J_{\s_i} \rightarrow [0, -i\pi/2]\cup \R^+$. Hence, for an analogue of property \eqref{eq_sktos} to hold, we would have to extend $J_{\overline{0_R}}$ to include both of those two segments. But then, such extension would not be a ray tail. We resolve this obstacle by considering two copies of $\Addr(f)$ indexed by $\{-,+\}$ and defining two ray tails $J^{n+1}_{(\overline{0_R}, +)}\defeq [0, i\pi/2]\cup \R^+$ and $J^{n+1}_{(\overline{0_R}, -)} \defeq [0, -i\pi/2]\cup \R^+$. By providing $\Addr(f) \times \{-,+\}$ with the ``right'' topology, an expression similar to \eqref{eq_sktos} holds for the elements in $\Addr(f) \times \{-,+\}$, that we call signed addresses. 
\end{example}
\noindent We now formalize these ideas with more generality:
\begin{discussion}[Space of signed addresses]\label{discussion_signed_addr}
Let $f\in \B$, and let $\Addr(f)$ be a set of admissible external addresses. Let us consider the set
$$\Addr(f)_\pm \defeq \Addr(f) \times \{-,+\},$$
that we endow with a topology: let $<_\ell$ be the lexicographical order in $\Addr(f)$ defined in \ref{dis_cyclic}, and let us give the set $\lbrace -,+\rbrace$ the order $\lbrace -\rbrace \prec\lbrace +\rbrace$. Define the linear order
\begin{equation} \label{eq_linear_addr}
(\ul{s}, \ast )<_{_A} (\ul{\tau}, \star) \qquad \text{ if and only if } \qquad \ul{s} <_{_\ell} \ul{\tau} \quad \text{ or } \quad \ul{s} =_{_\ell} \ul{\tau}\: \text{ and } \: \ast \prec \star,
\end{equation}
where the symbols ``$\ast, \star$'' denote generic elements of $\lbrace-, +\rbrace.$ 
This linear order gives rise to a cyclic order: for $a,x,b \in \Addr(f)_\pm$,
\begin{equation} \label{eq_orderAfpm}
[a,x,b]_{_A} \quad \text{if and only if} \quad a<_{_A} x<_{_A} b \quad \text{ or } \quad x <_{_A}b <_{_A} a \quad \text{ or }\quad b <_{_A} a <_{_A} x.
\end{equation}
In turn, this cyclic order allows us to define a \textit{cyclic order topology} $\tau_A$ in $\Addr(f)_\pm.$
\end{discussion}
\begin{defn}[Signed external addresses for criniferous functions]\label{defn_signedaddr} Let $f\in \B$ be a criniferous function and let $(\Addr(f)_\pm, \tau_A)$ be the corresponding topological space defined according to \ref{discussion_signed_addr}. A \emph{signed (external) address} for $f$ is any element of $\Addr(f)_\pm$. 
\end{defn}
For each criniferous function $f\in \B$ such that $J(f)\cap \AV(f)=\emptyset$, we aim to define signed external addresses for all points in $I(f)$ by extending subcurves of Julia constituents in a systematic way, as described in Example \ref{example_cosh}. In order to do so, we start by settling which extensions will be allowed at critical points, and defining \textit{canonical tails} as curves in the escaping set that agree with the criterion established.

Recall that the \emph{local degree} of $f$ at a point $z_0\in \C$, denoted by $\deg(f,z_0)$, is the unique integer $n\geq 1$ 
such that the local power series development of $f$ is of the form
\begin{equation*}
f(z)=f(z_0) + a_n (z-z_0)^n + \text{(higher terms)},
\end{equation*}
where $a_n\neq 0$.
\begin{discussion}[Extensions at critical points]\label{dis_canonical} Let $f\in \B$ and let $\delta$ be either a ray tail, or a dynamic ray (possibly together with its endpoint) such that $\delta \cap \AV(f)= \emptyset$ and $\delta \cap \CV(f)\neq \emptyset$. Let $\beta$ be a connected component of $f^{-1}(\delta)$ such that $\beta\cap \Crit(f)\neq \emptyset$. Then, each critical point $c\in \beta$ is the endpoint of $2\deg(f, c)$ curves in $\beta\setminus \Crit(f)$. We denote the set of all such curves by $\mathcal{L}(c)$ and note that topologically, each of them is a radial line from $c$. See Figure \ref{fig:canonical_tails}. For each $\alpha \in \mathcal{L}(c)$, let $\alpha^-, \alpha^+ \in \mathcal{L}(c)$ be the respective successor and predecessor curves of $\alpha$ with respect to the anticlockwise circular order of (topological) radial segments in $\mathcal{L}(c)$. Note that by construction, $f(\alpha^- \bm{\cdot} \{c\}\bm{\cdot} \alpha)$ and $f( \alpha^+ \bm{\cdot} \{c\}\bm{\cdot} \alpha)$ are mapped univalently to a subcurve of $\delta$.
\end{discussion}

\begin{figure}[h]
	\centering
%	\resizebox{0.85\textwidth}{!}{
\begingroup%
  \makeatletter%
  \providecommand\color[2][]{%
    \errmessage{(Inkscape) Color is used for the text in Inkscape, but the package 'color.sty' is not loaded}%
    \renewcommand\color[2][]{}%
  }%
  \providecommand\transparent[1]{%
    \errmessage{(Inkscape) Transparency is used (non-zero) for the text in Inkscape, but the package 'transparent.sty' is not loaded}%
    \renewcommand\transparent[1]{}%
  }%
  \providecommand\rotatebox[2]{#2}%
  \newcommand*\fsize{\dimexpr\f@size pt\relax}%
  \newcommand*\lineheight[1]{\fontsize{\fsize}{#1\fsize}\selectfont}%
  \ifx\svgwidth\undefined%
    \setlength{\unitlength}{425.19685039bp}%
    \ifx\svgscale\undefined%
      \relax%
    \else%
      \setlength{\unitlength}{\unitlength * \real{\svgscale}}%
    \fi%
  \else%
    \setlength{\unitlength}{\svgwidth}%
  \fi%
  \global\let\svgwidth\undefined%
  \global\let\svgscale\undefined%
  \makeatother%
  \begin{picture}(1,0.4)%
    \lineheight{1}%
    \setlength\tabcolsep{0pt}%
    \put(0,0){\includegraphics[width=\unitlength,page=1]{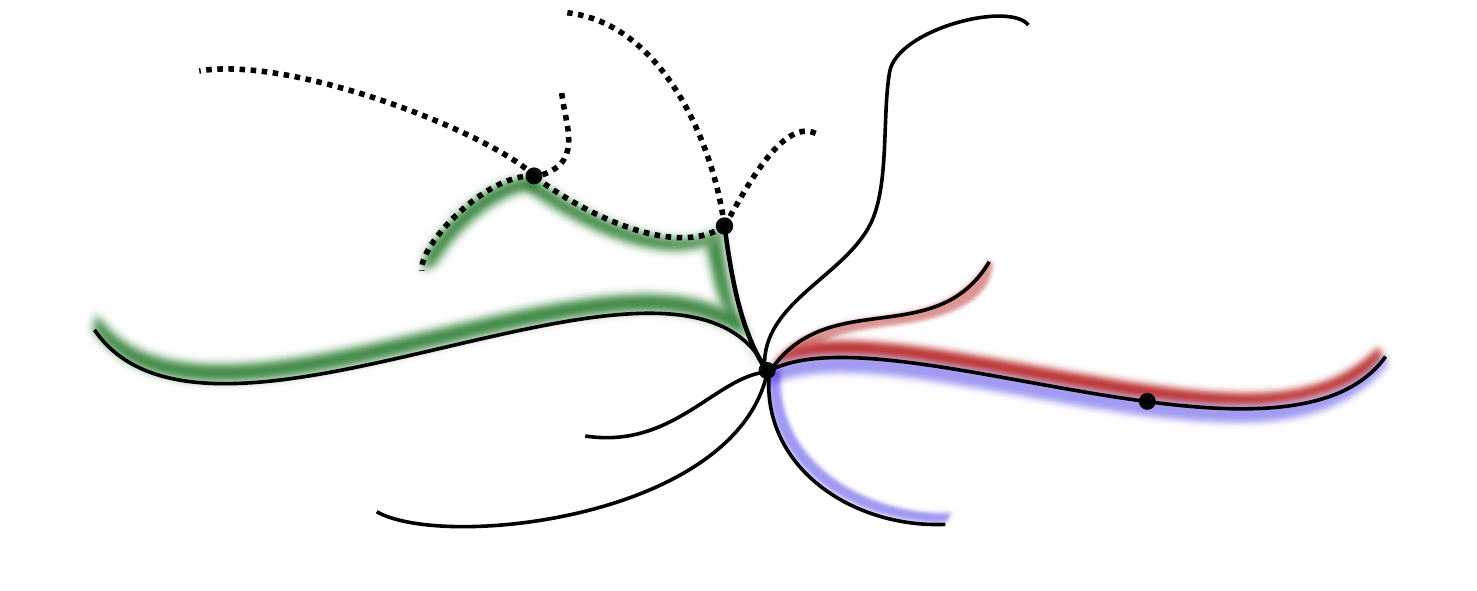}}%
    \put(0.48684461,0.14845912){\color[rgb]{0,0,0}\makebox(0,0)[lt]{\lineheight{1.25}\smash{\begin{tabular}[t]{l}$c$\end{tabular}}}}%
    \put(0.68407853,0.15962049){\color[rgb]{0,0,0}\makebox(0,0)[lt]{\lineheight{1.25}\smash{\begin{tabular}[t]{l}$\alpha$\end{tabular}}}}%
    \put(0.87291331,0.15446997){\color[rgb]{0,0,0}\makebox(0,0)[lt]{\lineheight{1.25}\smash{\begin{tabular}[t]{l}$\xi$\end{tabular}}}}%
    \put(0.62672088,0.21727364){\color[rgb]{0,0,0}\makebox(0,0)[lt]{\lineheight{1.25}\smash{\begin{tabular}[t]{l}$\alpha^+$\end{tabular}}}}%
    \put(0.59857033,0.02322734){\color[rgb]{0,0,0}\makebox(0,0)[lt]{\lineheight{1.25}\smash{\begin{tabular}[t]{l}$\alpha^-$\end{tabular}}}}%
    \put(0.10842542,0.17097423){\color[rgb]{0,0,0}\makebox(0,0)[lt]{\lineheight{1.25}\smash{\begin{tabular}[t]{l}$\gamma$\end{tabular}}}}%
  \end{picture}%
\endgroup
%}
	\caption{Definition of bristles and canonical tails. In the picture, critical points are represented by black dots, and curves in $\mathcal{L}(c)$ with continuous strokes. The curve $\gamma$, shown in green, is a canonical tail, and in particular a left-extended curve. The curves $\alpha^+$ and $\alpha^-$ are the respective right and left bristles of the curve $\alpha$, that has $c$ as an endpoint.}
\label{fig:canonical_tails}
\end{figure}

\begin{defn}[Canonical tails and rays] \label{def_leftrightextensions}Following \ref{dis_canonical}, we define:
\begin{itemize}[noitemsep,wide=0pt, leftmargin=\dimexpr\labelwidth + 2\labelsep\relax]
	\item The curves $\alpha^-$ and $\alpha^+$ are the respective \emph{left} and \emph{right bristles} of $\alpha$.
	\item For any curve $\xi \subset \beta$ such that $\xi \cap \mathcal{L}(c)=\alpha$ for some $\alpha \in \mathcal{L}(c)$, the concatenations
	$$ \alpha^- \bm{\cdot} \{c\}\bm{\cdot} \xi \quad \text{ and } \quad \alpha^+ \bm{\cdot} \{c\}\bm{\cdot} \xi $$	
	are the respective \emph{left} and \emph{right extensions of} $\xi$ at $c$.
	\item  Let $\lambda \subset \beta$ be an unbounded curve with finite endpoint $c_0$, and suppose that $\lambda\cap \Crit(f)=\{c_1, \ldots, c_n\}$ for some $n\in \N$, where the points $c_i$ are ordered from smallest to largest potential in 
	$\lambda$. For each $i$, let $\lambda_i$ be the unbounded curve in $\lambda\setminus \{c_i\}$. If for all $0\leq i\leq n-1$, $\lambda_i$ is a right (resp. left) extension of $\lambda_{i+1}$ at $c_{i+1}$, then we say that $\lambda$ is a \textit{right-extended curve} (resp. \textit{left-extended curve}).
\item If $\gamma$ is ray tail (resp. dynamic ray possibly with its endpoint) for which for all $n\geq 0$ such that $\Crit(f)\cap f^n(\gamma)\neq \emptyset$, the curve $f^n(\gamma)$ is either a right-extended or left-extended curve, all of the same type, then we say that $\gamma$ is a \textit{canonical tail} (resp. \textit{canonical ray}) \textit{of $f$.}
\end{itemize}
\end{defn}

\begin{remark}
If $\gamma\subseteq J^\infty_\s$ is a ray tail (resp. dynamic ray) for some $\s \in \Addr(f)$, then $\gamma$ is a canonical tail (resp. ray), since by definition of Julia constituents, $\Orb^+(J_\s^\infty) \cap \Crit(f)=\emptyset.$ 
\end{remark}

In the forthcoming Theorem \ref{thm_signed}, for certain criniferous functions, we will establish a correspondence between canonical tails and signed addresses. We achieve this by extending each of the curves $J^\infty_\s$ in two ways, so that all extensions are canonical curves, and so that all points in $I(f)$ belong to at least one canonical curve. In certain cases, rather than extending directly the curve $J^\infty_\s$, for technical reasons, it is more convenient to extend some unbounded subcurve of $J^\infty_\s$. The next definition establishes which conditions the mentioned subcurves must fulfil.

\begin{defn}(Initial configuration of tails)\label{def_initial_conf} Let $f\in \B$ and suppose that for each $\s \in \Addr(f)$, there exists a curve $\gamma^0_\s\subset J^\infty_\s$ that is either a ray tail, or a dynamic ray possibly with its endpoint. The set of curves $\{\gamma^0_\s\}_{\s \in \Addr(f)}$ is a \emph{valid initial configuration} for $f$ if for each $\s\in \Addr(f)$, $f(\gamma^0_\s) \subset \gamma^0_{\sigma(\s)}$ and
\begin{equation}\label{eq_S0}
I(f) \subset \Orb^{-}\left( \bigcup_{\s \in \Addr(f)} \gamma^0_\s \right) \eqdef \mathcal{S}.
\end{equation}	
\end{defn}	 

\begin{observation}[Existence of initial configuration equivalent to criniferous]\label{obs_config_crini} Note that if for a function $f\in \B$ there exists a valid initial configuration, by \eqref{eq_S0}, $f$ is criniferous. Conversely, if $f\in \B$ is criniferous, by Theorem \ref{prop_criniferous}, $\{J^\infty_\s\}_{\s \in \Addr(f)}$ is a valid initial configuration for $f$. Moreover, note that all curves in a valid initial configuration are canonical and pairwise disjoint, as Julia constituents are.
\end{observation}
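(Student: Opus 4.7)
The statement is an omnibus observation with three separate assertions: (i) existence of a valid initial configuration forces criniferousness, (ii) when $f$ is criniferous, $\{J^\infty_\s\}_{\s\in\Addr(f)}$ itself gives one, and (iii) every curve in any valid initial configuration is canonical and the curves are pairwise disjoint. Each part should be essentially bookkeeping on the definitions and on Theorem \ref{prop_criniferous}; the plan is simply to assemble these pieces without introducing any new dynamics.

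For (i), I would pick $z\in I(f)$ and use \eqref{eq_S0} to obtain $N\geq 0$ and $\s\in\Addr(f)$ with $f^N(z)\in\gamma^0_\s$. Since $\gamma^0_\s$ is by hypothesis either a ray tail or a dynamic ray (possibly with endpoint), the point $f^N(z)$ sits on some ray tail --- directly if $\gamma^0_\s$ is itself a ray tail, and otherwise on the ray tail obtained by restricting $\gamma^0_\s$ to $[t,\infty)$ for sufficiently small $t>0$. The forward-invariance clause $f(\gamma^0_\s)\subseteq \gamma^0_{\sigma(\s)}$, applied inductively, then places every later iterate $f^{N+k}(z)$ on a ray tail as well, which is exactly Definition \ref{def_ray}.

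For (ii), Theorem \ref{prop_criniferous} supplies the two substantive clauses: each $J^\infty_\s$ is a ray tail or a dynamic ray with endpoint, and \eqref{eq_IfinOrbminus} is precisely the inclusion required by \eqref{eq_S0}. The remaining condition $f(J^\infty_\s)\subseteq J^\infty_{\sigma(\s)}$ follows from \eqref{eq_inclJs} together with the description in \ref{dis_Jsinfty}: a closed, unbounded, connected set $X\subseteq J_\s$ on which iterates escape uniformly is mapped by $f$ to a set of the same type inside $J_{\sigma(\s)}$, and continuity passes this inclusion through the closure.

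For (iii), the remark preceding Definition \ref{def_initial_conf} already identifies the point: since $\gamma^0_\s\subseteq J^\infty_\s$ and, by Definition \ref{def_extaddr}, every forward iterate of a point in $J_\s$ lies in $\unbdd{F}_n\subseteq \C\setminus\overline{D}$ while $\CV(f)\subseteq S(f)\subseteq \overline{D}$, no forward iterate of $\gamma^0_\s$ can contain a critical point (otherwise its image would lie in $\CV(f)\subseteq\overline{D}$). Hence the conditions of Definition \ref{def_leftrightextensions} defining a canonical tail/ray are satisfied vacuously. Pairwise disjointness reduces to the pairwise disjointness of the constituents $\{J_\s\}_\s$, which is Observation \ref{obs_addrz}; the only step requiring slight care is the non-escaping residue (at most one endpoint per $\s$ by Theorem \ref{prop_criniferous}), and there one uses that such an endpoint is approached by escaping points all of which share the address $\s$, so it cannot simultaneously lie on $J^\infty_{\ultau}$ for any $\ultau\neq\s$. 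I expect this last point --- distinguishing possible non-escaping endpoints across different addresses --- to be the only place where one must do more than quote earlier results.
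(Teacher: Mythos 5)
Your proposal is correct and matches the paper's (essentially unwritten) justification: the paper simply points to \eqref{eq_S0} for the forward direction, to Theorem \ref{prop_criniferous} for the converse, and to the disjointness of Julia constituents plus the remark after Definition \ref{def_leftrightextensions} for the last claim, which is exactly the route you take. Your extra care about non-escaping endpoints in the disjointness argument goes slightly beyond what the paper records, but is consistent with it.
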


\begin{thm}[Indexed canonical tails]\label{thm_signed} Let $f\in \B$ be criniferous with $\AV(f)\cap J(f)=\emptyset$, and let $\{\gamma^0_\s\}_{\s \in \Addr(f)}$ be a valid initial configuration for $f$.  Then, for each $n\in \N$ and $(\s, \ast) \in \Addr(f)_\pm$, there exists a curve $\gamma^n_{(\s,\ast)}$, that is either a canonical tail or dynamic ray with possibly its endpoint, with the following properties:
\begin{enumerate}[label=(\alph*)]
\item $\gamma^0_{(\s,-)}\defeq\gamma^0_{(\s,+)}\defeq\gamma^0_\s\subset J^\infty_\s$; 
\item for all $n\geq 1$, $\gamma^{n-1}_{(\s, \ast)} \subseteq \gamma^{n}_{(\s, \ast)}$ and $f \colon \gamma^{n}_{(\s, \ast)} \to \gamma^{n-1}_{(\sigma(\s), \ast)}$ is a bijection.
\label{item:tails_bijection}
\end{enumerate}
In particular, if $\mathcal{S}$ is the set from \eqref{eq_S0} and, for each $(\s, \ast) \in \Addr(f)_\pm$, we define the \emph{$\Gamma$-curve}
$\Gamma(\s, \ast)\defeq \bigcup_{n\geq 0}\gamma^n_{(\s, \ast)},$
then $\mathcal{S}= \bigcup_{(\s, \ast)\in \Addr(f)_\pm}\Gamma(\s, \ast)$.
\end{thm}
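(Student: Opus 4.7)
The plan is to proceed by induction on $n \geq 0$. For the base case $n = 0$, I would directly set $\gamma^0_{(\underline{s}, -)} := \gamma^0_{(\underline{s}, +)} := \gamma^0_{\underline{s}}$, which by Definition \ref{def_initial_conf} and the standing hypothesis is either a ray tail or a dynamic ray together with its endpoint, contained in $J^\infty_{\underline{s}}$. Since $\Orb^+(J^\infty_{\underline{s}}) \cap \Crit(f) = \emptyset$, it is vacuously a canonical tail (or canonical ray), so property (a) holds and property (b) is vacuous at level $0$.

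For the inductive step, assume the conclusion up to level $n - 1$. Fix $(\underline{s}, \ast) \in \Addr(f)_\pm$, and let $z_{n-1}$ denote the finite endpoint of $\gamma^{n-1}_{(\underline{s}, \ast)}$. By the inductive hypothesis, $f(z_{n-1})$ is the finite endpoint of $\gamma^{n-2}_{(\sigma(\underline{s}), \ast)} \subseteq \gamma^{n-1}_{(\sigma(\underline{s}), \ast)}$, and the outer curve is itself a canonical tail or canonical ray with endpoint. My construction is to define $\gamma^n_{(\underline{s}, \ast)}$ as $\gamma^{n-1}_{(\underline{s}, \ast)}$ concatenated with a specific lift of the remaining piece $\gamma^{n-1}_{(\sigma(\underline{s}), \ast)} \setminus \gamma^{n-2}_{(\sigma(\underline{s}), \ast)}$, initiated at $z_{n-1}$ along the unique continuation of $\gamma^{n-1}_{(\underline{s}, \ast)}$ provided by the local inverse branch of $f$. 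Because $\AV(f) \cap J(f) = \emptyset$ and the curve being lifted lies in $I(f)$ together with possibly a landing point in $J(f)$, the lift is obstructed only at critical points; at each such critical point $c$, the $2\deg(f, c)$ preimage arms from Definition \ref{def_leftrightextensions} present a binary choice, and I would select the right extension when $\ast = +$ and the left extension when $\ast = -$, continuing through $c$ into the unique arm determined by that choice.

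Next I would verify that $\gamma^n_{(\underline{s}, \ast)}$ is a canonical tail (or canonical ray with endpoint), that $\gamma^{n-1}_{(\underline{s}, \ast)} \subseteq \gamma^n_{(\underline{s}, \ast)}$, and that $f \colon \gamma^n_{(\underline{s}, \ast)} \to \gamma^{n-1}_{(\sigma(\underline{s}), \ast)}$ is a bijection. Unboundedness follows from $\infty$ being the only essential singularity of $f$: any bounded accumulation set of the lift would contradict continuity of $f$ at a finite point, since $\gamma^{n-1}_{(\sigma(\underline{s}), \ast)}$ is unbounded. Injectivity is preserved because, at each critical point, selecting a single left/right extension produces a simple arc through that point, while away from critical points the continuation is given by a local inverse branch of $f$. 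The bijection property is built into the construction by the way preimage arms are selected. Finally, $\gamma^n_{(\underline{s}, \ast)}$ is canonical because every critical point already present in $\gamma^{n-1}_{(\underline{s}, \ast)}$ is of type $\ast$ by the inductive hypothesis, every new critical point is assigned the same type by construction, and for $k \geq 1$ the identity $f^k(\gamma^n_{(\underline{s}, \ast)}) = f^{k-1}(\gamma^{n-1}_{(\sigma(\underline{s}), \ast)})$ reduces the canonical condition on higher iterates to the inductive hypothesis.

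For the final equality $\mathcal{S} = \bigcup_{(\underline{s}, \ast)} \Gamma(\underline{s}, \ast)$, the inclusion $\supseteq$ is immediate since $\gamma^n_{(\underline{s}, \ast)} \subseteq f^{-n}(\gamma^0_{\sigma^n(\underline{s})})$. For $\subseteq$, given $z \in \mathcal{S}$, I would pick the minimal $n$ with $f^n(z) \in \gamma^0_{\underline{\tau}}$ for some $\underline{\tau} \in \Addr(f)$, and then trace backwards through the iterates $f^{n-1}(z), \ldots, z$: at each step, the specific preimage branch containing the point determines an entry of the fundamental-domain sequence, which assembles an admissible $\underline{s}$ with $\sigma^n(\underline{s}) = \underline{\tau}$, while any critical point encountered during the backward trace forces a choice of side $\ast \in \{-, +\}$ that is consistent across all such crossings by the canonical structure. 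The main obstacle in executing this plan is showing simultaneously that the lift cannot terminate before reaching infinity and that the selection of extensions at critical points is globally consistent with Definition \ref{def_leftrightextensions}; both of these rely critically on the absence of asymptotic values on $J(f)$ and on propagating the canonical structure through the inductive hypothesis.
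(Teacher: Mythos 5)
Your proposal follows essentially the same route as the paper: an induction on $n$ in which $\gamma^n_{(\s,\ast)}$ is obtained by pulling back $\gamma^{n-1}_{(\sigma(\s),\ast)}$ through the preimage component containing $\gamma^{n-1}_{(\s,\ast)}$, choosing the right (resp.\ left) extension at each critical point when $\ast=+$ (resp.\ $\ast=-$), and propagating canonicality of higher iterates via $f^k(\gamma^n_{(\s,\ast)})=f^{k-1}(\gamma^{n-1}_{(\sigma(\s),\ast)})$. The termination issue you flag as the main obstacle is resolved in the paper exactly as you anticipate: since $J(f)\cap\AV(f)=\emptyset$, the preimage in the relevant component of the bounded subarc being lifted is itself bounded, so the nested sequence of left/right-extended curves converges to a curve mapping bijectively onto all of $\gamma^{n-1}_{(\sigma(\s),\ast)}$.
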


\begin{proof}
Without loss of generality and for clarity of exposition, we assume that all curves in the given initial configuration are canonical tails, since our arguments work exactly the same way if any curve is a dynamic ray (possibly with its endpoint). We construct canonical tails inductively on $n$ and simultaneously for all elements in $\Addr(f)_\pm$. Let $n=1$ and choose any $(\s, \ast) \in \Addr(f)_\pm$. Since by assumption $f(\gamma^0_\s)\subset \gamma^0_{\sigma(\s)}$, there exists a connected component $\beta$ of $f^{-1}(\gamma^0_{(\sigma(\s), \ast)})$ such that $\gamma^0_{(\s, \ast)} \subseteq \beta .$ Define 
$$ \gamma^1_{(\s, \ast)}\defeq \beta, $$ 
which is a canonical tail since it is a preimage of the canonical tail $\gamma^0_{(\sigma(\s), \ast)}$, that by definition does not contain any singular values, and hence $\gamma^1_{(\s, \ast)}\cap \Crit(f)=\emptyset.$ Note that $\gamma^1_{(\s, -)}=\gamma^1_{(\s, +)}$, and so the curve can be regarded both as left-extended or right-extended. However, for the purpose of our inductive argument, we regard $\gamma^1_{(\s, -)}$ as a left-extended curve, and $\gamma^1_{(\s, +)}$ is a right-extended curve.

Suppose that \ref{item:tails_bijection} has been proved for some $n\in \N$ and all elements in $\Addr(f)_\pm$. We shall see that it holds for $n+1$. By the inductive hypothesis, for each $(\s, \ast)\in \Addr(f)_\pm$, both $\gamma^n_{(\s, \ast)}$ and $\gamma^n_{(\sigma(\s), \ast)}$ are well-defined canonical tails, and $f(\gamma^n_{(\s, \ast)})=\gamma^{n-1}_{(\sigma(\s), \ast)}\subset \gamma^n_{(\sigma(\s), \ast)}$. Moreover, $\gamma^n_{(\sigma(\s), -)}$ must be a left-extended curve, and $\gamma^n_{(\sigma(\s), +)}$ a right-extended curve. Let $\beta$ be the component of $f^{-1}(\gamma^n_{(\sigma(\s), \ast)})$ that contains $\gamma^n_{(\s, \ast)}$. If $(\beta\setminus \gamma^n_{(\s, \ast)}) \cap \Crit(f)=\emptyset$, then we denote
$$\gamma^{n+1}_{(\s, \ast)}\defeq\beta,$$
which is a canonical tail by the same argument as before. Otherwise, $\gamma^n_{(\s, \ast)}$ must be contained in a unique connected component $L_1$ of $\beta\setminus (\Crit(f)\setminus \gamma^n_{(\s, \ast)}).$ In particular, $L_1\setminus \gamma^n_{(\s, \ast)}$ does not contain any critical points, but can be extended to contain a critical point $c_1 \in \beta$ as finite endpoint. Hence, since by the inductive hypothesis $f\vert_{ \gamma^{n}_{(\s, \ast)}}$ maps bijectively to $\gamma^{n-1}_{(\sigma(\s), \ast)}$, $f$ maps the curve $\{c_1\}\bm{\cdot}L_1$ univalently to $\gamma^{n}_{(\sigma(\s), \ast)}$. If $f(\{c_1\} \bm{\cdot} L_1)=\gamma^n_{(\sigma(\s), \ast)}$, then we define 
$$\gamma^{n+1}_{(\s, \ast)}\defeq \{c_1\} \bm{\cdot} L_1, $$
and the claim follows. If, on the contrary, $f(\{c_1\} \bm{\cdot} L_1)\subsetneq \gamma^n_{(\sigma(\s), \ast)}$, then $L_1$ must be a curve containing a unique element of $\mathcal{L}(c_1)$. Thus, following Definition \ref{def_leftrightextensions}, we define $L_2$ as the respective right or left extension of $L_1$ at $c_1$, according to whether $\ast=+$ or $\ast=-$. That is, if $\alpha^-$ and $\alpha^+$ are the respective left and right bristles of $L_1$ at $c_1$, then we define
\begin{equation*}
L_2\defeq \renewcommand{\arraystretch}{1.5}\left\{\begin{array}{@{}l@{\quad}l@{}}
\alpha^+ \bm{\cdot} \{c_1\} \bm{\cdot} L_1 & \text{if } \ast=+, \quad \text{ or} \\
\alpha^- \bm{\cdot} \{c_1\} \bm{\cdot} L_1 & \text{if } \ast=-.
\end{array}\right.\kern-\nulldelimiterspace
\end{equation*}
Since $\beta$ is the preimage of a ray tail, the curve $L_2$ can be extended to contain an endpoint $c_2$, and if $c_2$ is not a critical point, then $f(c_2)$ must be the finite endpoint of $\gamma^n_{(\sigma(\s), \ast)}$. If $f(\{c_2\} \bm{\cdot} L_2)=\gamma^n_{(\sigma(\s), \ast)}$, then we define 
$$\gamma^{n+1}_{(\s, \ast)}\defeq \{c_2\} \bm{\cdot} L_2 $$
and the claim follows. This is because by construction, $\{c_2\} \bm{\cdot} L_2$ is either a right-extended or left-extended curve, depending only on whether $\ast=+$ or $\ast=-$, and by the inductive hypothesis, the same applies to the canonical tails $\gamma^{n}_{(\s, \ast)}$ and $\gamma^n_{(\sigma(\s), \ast)}$, and hence $\gamma^{n+1}_{(\s, \ast)}$ is a canonical tail. Otherwise, if $f(\{c_2\} \bm{\cdot} L_2)\neq \gamma^n_{(\sigma(\s), \ast)}$, the point $c_2$ must be a critical point, and we can define $L_3$ as the right or left extension of $L_2$ at $c_2$ following the same criterion as before. Iterating this process, we get a collection $\cdots\supset L_{i+1} \supset L_i \supset \cdots$ of right or left extended curves, all of the same type, contained in $\beta$. Since $\beta\subset J(f)$ and, by assumption, $J(f)\cap \AV(f)=\emptyset$, this process must converge. To see this, suppose that the piece of $\gamma^n_{(\sigma(\s), \ast)}$ from its finite endpoint $p$ to $f(c_1)$ is parametrized from $[0,1]$. Then, $f^{-1}(\gamma^n_{(\sigma(\s), \ast)}([0, 1]))\cap \beta$ is bounded, and so the sequence $\{L_{i}\}_{i\geq 1}$ converges to a canonical tail $L \subset \beta$ such that $f(L)=\gamma^n_{(\sigma(\s), \ast)}$. Consequently, by defining
$$\gamma^{n+1}_{(\s, \ast)}\defeq L, $$
\ref{item:tails_bijection} follows. The second part of the statement is a direct consequence of the construction process together with \eqref{eq_S0} in Definition \ref{def_initial_conf}.
\end{proof}	

We have shown in Theorem \ref{thm_signed} that the escaping set of any function satisfying its hypothesis, consists of a collection of $\Gamma$-curves, each of them being a union of nested canonical tails. However, we cannot assert that each of these canonical ray lands. This is however achieved in \cite{mio_splitting} for functions satisfying further assumptions. Next, we study the overlapping occurring within the collection of canonical rays:
\begin{prop}[Overlapping of $\Gamma$-curves] \label{prop_overlapping_Gamma} Following Theorem \ref{thm_signed}, for each $(\s, \ast) \in \Addr(f)_\pm$, either $\Gamma(\s, -)=\Gamma(\s, +)$ when $\Orb^{-}(\Crit(f)) \cap \Gamma(\s, \ast) =\emptyset$, or $\Gamma(\s, \ast)$ can be expressed as a concatenation 
\begin{equation}\label{def_concat2}
\Gamma(\s, \ast)=\cdots \bm{\cdot} \{c_{i+1}\}\bm{\cdot} \gamma^{i+1}_{i} \bm{\cdot} \{c_i\} \bm{\cdot} \cdots \bm{\cdot}\gamma^{1}_{0} \bm{\cdot} \{c_0\} \bm{\cdot} \gamma^{\infty}_{c_0},
\end{equation}
where $\{c_i\}_{i\in I}=\Orb^{-}(\Crit(f)) \cap \Gamma(\s, \ast)$, for each $i\geq 1$, if it exists, the curve $\gamma^{i+1}_{i}$ is a (bounded) piece of dynamic ray, and $\gamma^{\infty}_{c_0}$ is a piece of dynamic ray joining $c_0$ to infinity. In particular, in the latter case, the following properties hold for $\Gamma(\s, \ast)$:
\begin{enumerate}[label=(\Alph*)]
\item \label{itemA_Gamma} $\gamma^{\infty}_{c_0} \cup \{c_0\}= \Gamma(\s,- ) \cap \Gamma(\s, +)$ and $\gamma^{\infty}_{c_0}$ does not belong to any other $\Gamma$-curve.	
\item \label{itemB_Gamma} For each $i\geq 0$, the point $c_i$ belongs to exactly $2 \prod^{\infty}_{j=0}\deg(f,f^{j}(c_i))$ $\Gamma$-curves.
\item \label{itemC_Gamma} For each $i\geq 0$, $\gamma^{i+1}_i= \Gamma(\s, \ast)\cap \Gamma(\ultau, \star)$, where $\star\neq \ast$ and $\sigma^j(\ultau)=\sigma^j(\s)$ for some $j\geq 1$. Moreover, $\gamma^{i+1}_i$ does not belong to any other $\Gamma$-curve.
\end{enumerate}
\end{prop}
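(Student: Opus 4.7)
The plan is to argue directly from the inductive construction used in the proof of Theorem \ref{thm_signed}. Recall that $\gamma^{n+1}_{(\s, \ast)}$ is obtained from $\gamma^n_{(\s, \ast)}$ by taking the preimage component $\beta$ of $f^{-1}(\gamma^n_{(\sigma(\s), \ast)})$ containing it and extending through any critical points of $f$ met along $\beta$ using right bristles ($\ast = +$) or left bristles ($\ast = -$). The sign $\ast$ plays a role only at stages where a critical point is encountered. Hence if $\Orb^{-}(\Crit(f)) \cap \Gamma(\s, \ast) = \emptyset$, the constructions for $+$ and $-$ agree at every stage, so $\Gamma(\s, -) = \Gamma(\s, +)$. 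Otherwise, I would enumerate the branching points of $\Gamma(\s, \ast)$ as $c_0, c_1, c_2, \ldots$, with $c_0$ closest to infinity along the curve and indices growing inward, and define $\gamma^{\infty}_{c_0}$ as the piece of $\Gamma(\s, \ast)$ joining $c_0$ to infinity and $\gamma^{i+1}_i$ as the piece joining $c_i$ to $c_{i+1}$; the concatenation \eqref{def_concat2} follows from the fact that $\Gamma(\s, \ast)$ is the increasing union of the $\gamma^n_{(\s, \ast)}$, which by construction introduce branching only at critical points encountered during preimaging.

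For \ref{itemA_Gamma}, both $\Gamma(\s, +)$ and $\Gamma(\s, -)$ are built from the same $\gamma^0_\s$ using identical preimage components until the first stage at which an extension through a critical point is needed, and that critical point is precisely $c_0$; hence $\gamma^{\infty}_{c_0} \cup \{c_0\} \subseteq \Gamma(\s, -) \cap \Gamma(\s, +)$, with equality beyond $c_0$ by the opposite sign choice. To rule out other $\Gamma$-curves through $\gamma^{\infty}_{c_0}$, observe that $\gamma^{\infty}_{c_0}$ contains no critical points, so at every stage of the construction the relevant preimage component lies in a single fundamental domain, namely $\unbdd{F}^\s_0$; iterating, every point $p \in \gamma^{\infty}_{c_0}$ satisfies $f^k(p) \in \unbdd{F}^\s_k$ for all $k \geq 0$, i.e.\ has external address $\s$. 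If also $p \in \Gamma(\ultau, \star)$, the analogous observation for $\Gamma(\ultau, \star)$ forces $p$ to have external address $\ultau$; uniqueness of external addresses (Observation \ref{obs_addrz}) then gives $\ultau = \s$. For \ref{itemC_Gamma}, the analogous argument, applied after iterating $f$ the number $j$ of times at which $\gamma^{i+1}_i$ first enters the construction, uses disjointness of Julia constituents at shift $j$ to yield $\sigma^j(\ultau) = \sigma^j(\s)$; the sign must flip because among the two canonical curves through $c_i$ containing $\gamma^{i+1}_i$ as a radial segment, one uses $\gamma^{i+1}_i$ paired with its right-adjacent bristle and the other with its left-adjacent bristle, and these correspond to $\ast = +$ and $\ast = -$ in the extension rule at $c_i$, forcing $\star = -\ast$.

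For \ref{itemB_Gamma}, let $R(c)$ denote the number of radial segments of the local preimage structure at $c$. By \ref{dis_canonical}, when $c$ is a branching point, each pair of adjacent segments in the cyclic order around $c$ determines exactly one canonical curve through $c$, so the number of $\Gamma$-curves through $c$ equals $R(c)$. I would establish the recursion $R(c) = \deg(f, c) \cdot R(f(c))$: when $c$ is not critical, $f$ is a local biholomorphism at $c$, so the radial structures at $c$ and $f(c)$ are in bijection; when $c$ is critical of local degree $d$, the local model $z \mapsto z^d$ lifts each radial segment at $f(c)$ to $d$ segments at $c$. Using the base case $R(z) = 2$ at any interior point of a canonical curve whose forward orbit contains no critical points (two directions along the curve), and iterating along the forward orbit of $c_i$---which encounters only finitely many critical points---one obtains $R(c_i) = 2 \prod_{j=0}^{\infty} \deg(f, f^j(c_i))$.

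The main obstacle is the bookkeeping in \ref{itemC_Gamma}: pinning down exactly which address $\ultau$ (with $\sigma^j(\ultau) = \sigma^j(\s)$) and sign $\star = -\ast$ produce the second $\Gamma$-curve through $\gamma^{i+1}_i$, and ruling out all others. This reduces to tracing the construction back from the iterate where $\gamma^{i+1}_i$ first appears and verifying, via injectivity of $f$ on each fundamental domain and the local picture at $c_i$, that the two bristles of $c_i$ adjacent to $\gamma^{i+1}_i$ correspond precisely to the two signs, while any different sequence of preimage component choices at earlier stages produces a $\Gamma$-curve disjoint from $\gamma^{i+1}_i$.
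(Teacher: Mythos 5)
Your overall strategy coincides with the paper's: read the dichotomy and the concatenation \eqref{def_concat2} off the inductive construction in Theorem \ref{thm_signed}, and then analyse the local picture at (preimages of) critical points, pushing forward under $f$ to reduce questions of overlap to the initial configuration. However, your argument for the ``no other $\Gamma$-curve'' half of \ref{itemA_Gamma} contains a false step: you assert that every $p\in\gamma^{\infty}_{c_0}$ satisfies $f^k(p)\in\unbdd{F}_k$ for all $k\geq 0$ and hence has external address $\s$. A point $p\in\gamma^{n}_{(\s,\ast)}\setminus\gamma^{0}_{(\s,\ast)}$ is only guaranteed to satisfy $f^n(p)\in\gamma^0_{(\sigma^n(\s),\ast)}\subset J^{\infty}_{\sigma^n(\s)}$; for $j<n$ the iterate $f^j(p)$ may lie in $D$ or in a bounded component of $F_j\setminus\overline{D}$ (for $f=\cosh$, the bristles $[0,\pm i\pi/2]$ map into $[0,1]\subset D$), so $p$ need not have any external address in the sense of Definition \ref{def_extaddr} --- which is precisely why signed addresses are introduced. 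The correct mechanism, which you do invoke for \ref{itemC_Gamma}, is to apply $f^n$ and use that the level-$0$ curves are pairwise disjoint; the paper isolates this as a standalone Claim (a point lying in a curve of level $n$ can only lie in $\Gamma(\ultau,\star)$ if it already lies in $\gamma^n_{(\ultau,\star)}$, forcing $\sigma^n(\ultau)=\sigma^n(\s)$), from which \ref{itemA_Gamma} follows since $f^n$ is injective on a neighbourhood of $\gamma^{\infty}_{c_0}$.

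For \ref{itemB_Gamma}, your identification of the number of $\Gamma$-curves through $c_i$ with the number of local radial segments, via ``each adjacent pair of segments determines exactly one canonical curve,'' is the right picture, but both the existence and the uniqueness of that correspondence are global statements depending on \ref{itemA_Gamma} and \ref{itemC_Gamma} at the outer branch points $c_0,\dots,c_{i-1}$, not just on the local model $z\mapsto z^{\deg(f,c)}$; moreover the recursion $R(c)=\deg(f,c)\,R(f(c))$ requires a case distinction according to whether $f(c)$ is the finite endpoint of the image curve (the paper notes $\#\mathcal{L}(c)$ is $\deg(f,c)\cdot N$ or $2\deg(f,c)\cdot N$ accordingly). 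The paper resolves both issues by proving \ref{itemB_Gamma} and \ref{itemC_Gamma} simultaneously by induction on the level $n$, with all statements restricted to $\gamma^n_{(\s,\ast)}$; without such a joint induction your argument for (B) leans circularly on (C), whose ``bookkeeping'' you flag as the main obstacle but do not carry out. That bookkeeping --- each curve of $\mathcal{L}_u$ carries a pair $\gamma^n_{(\ultau,+)}$, $\gamma^n_{(\ul{\alpha},-)}$ with $\sigma(\ultau)=\s=\sigma(\ul{\alpha})$, and each curve of $\mathcal{L}_b$ serves as left bristle for exactly one left-extended curve and right bristle for exactly one right-extended curve --- is the substance of the inductive step and needs to be written out.
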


\begin{remark}
Note that $\mathcal{S}\subset J(f)$ by definition, and since any periodic critical point of $f$ is in $\C\setminus J(f)$, for each $i\in I$, the product $2 \prod^{\infty}_{j=0}\deg(f,f^{j}(c_i))$ in \ref{itemB_Gamma} is always finite. More specifically, let $n\in \N$ such that the point $c_i\in \gamma^n_{(\s,\ast)}\setminus \gamma^{n-1}_{(\s,\ast)}$. Then, by Theorem \ref{thm_signed} it holds that $f^n(c_i) \subset \gamma^0_{(\sigma^n(\s),\ast)}$, and as since by definition $\gamma^0_{(\sigma^n(\s),\ast)} \cap \Orb^{-}(\Crit(f))=\emptyset$, $ \prod^{\infty}_{j=0}\deg(f,f^{j}(c_i))=\prod^{n}_{j=0}\deg(f,f^{j}(c_i))<\infty$.
\end{remark}

\begin{proof}[Proof of Proposition \ref{prop_overlapping_Gamma}]
For a fixed $(\s, \ast) \in \Addr(f)_\pm$, the dichotomy and characterization of $\Gamma(\s, \ast)$ in the statement are a direct consequence of its definition in Theorem \ref{thm_signed}; more specifically, suppose that $\Orb^{-}(\Crit(f))\cap \Gamma(\s, \ast) =\emptyset$. Then, for each $n\geq 0$, there exists an inverse branch of $f^n$ defined in a neighbourhood of $\gamma^0_{(\sigma^n(\s),\ast)}$ that maps $\gamma^0_{(\sigma^n(\s),\ast)}$ bijectively to the connected component of $f^{-n}(\gamma^0_{(\sigma^n(\s),\ast)})$ that contains $\gamma^n_{(\s, -)}=\gamma^n_{(\s, +)}$. If $\Orb^{-}(\Crit(f))\cap \Gamma(\s, \ast) \neq\emptyset$, then since by Theorem \ref{thm_signed} the curve $\Gamma(\s, \ast)$ is a union of nested canonical tails, it must be of the form specified in \eqref{def_concat2}. 

For the rest of the proof, for each $n\in \N$ we refer to the elements in $$L(n)\defeq\left\{ \gamma^n_{(\ultau, \star)} \text{ : } (\ultau, \star)\in \Addr(f)_\pm \right\} $$
as \textit{curves of level $n$}. We shall use the following observation:
\begin{Claim}
Suppose that $z\in \gamma \in L(n)$ for some $n\in \N$. Then, $z\in \gamma^{m}_{(\ultau, \star)}$ for some $m>n$ and $(\ultau, \star)\in \Addr(f)_\pm$ if and only if $z\in \gamma^{n}_{(\ultau, \star)}$. 
\end{Claim}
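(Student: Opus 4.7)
The plan is to establish the two implications separately. The ``if'' direction is immediate from the nesting $\gamma^{n-1}_{(\ultau,\star)} \subseteq \gamma^{n}_{(\ultau,\star)}$ granted by Theorem~\ref{thm_signed}\ref{item:tails_bijection}. For the ``only if'' direction, pick $(\s',\ast')$ so that $z \in \gamma^n_{(\s',\ast')}$ (which exists since $z \in \gamma \in L(n)$), and assume $z \in \gamma^m_{(\ultau,\star)}$ with $m > n$. Iterating Theorem~\ref{thm_signed}\ref{item:tails_bijection} $n$ times, $f^n$ restricts to bijections $\gamma^m_{(\ultau,\star)} \to \gamma^{m-n}_{(\sigma^n(\ultau),\star)}$ and $\gamma^n_{(\s',\ast')} \to \gamma^0_{\sigma^n(\s')}$, so $f^n(z) \in \gamma^0_{\sigma^n(\s')} \cap \gamma^{m-n}_{(\sigma^n(\ultau),\star)}$. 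Granting temporarily that this intersection is contained in $\gamma^0_{\sigma^n(\ultau)} = f^n(\gamma^n_{(\ultau,\star)})$, we can pick $w \in \gamma^n_{(\ultau,\star)} \subseteq \gamma^m_{(\ultau,\star)}$ with $f^n(w) = f^n(z)$, and injectivity of $f^n$ on $\gamma^m_{(\ultau,\star)}$ gives $z = w \in \gamma^n_{(\ultau,\star)}$ as required.

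The technical core is thus the auxiliary disjointness lemma: for every $k \geq 0$ and all $\ultau', \s'' \in \Addr(f)$,
\begin{equation*}
\bigl(\gamma^k_{(\ultau',\star)} \setminus \gamma^0_{\ultau'}\bigr) \cap \gamma^0_{\s''} = \emptyset,
\end{equation*}
which I would prove by induction on $k$, writing $\ultau' = F_0^{\ultau'} F_1^{\ultau'} \ldots$ and $\s'' = F_0^{\s''} F_1^{\s''} \ldots$. For the base case $k = 1$, I would first verify $\gamma^0_{\sigma(\ultau')} \subset \mathcal{W}$: by definition of the initial configuration $\gamma^0_{\sigma(\ultau')} \subseteq \unbdd{F}_1^{\ultau'}$, which avoids $\overline{D}$ by construction of $\unbdd{\cdot}$ and avoids $\delta$ because it lies inside a tract while $\delta$ is disjoint from the closures of all tracts. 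Since $f$ maps each fundamental domain homeomorphically onto $\mathcal{W}$, every connected component of $f^{-1}(\gamma^0_{\sigma(\ultau')})$ is confined to a single fundamental domain, which forces $\gamma^1_{(\ultau',\star)} \subseteq F_0^{\ultau'}$. If now $z \in \gamma^1_{(\ultau',\star)} \cap \gamma^0_{\s''}$, then $z \in F_0^{\ultau'} \cap F_0^{\s''}$ forces $F_0^{\ultau'} = F_0^{\s''}$, and $f(z) \in \gamma^0_{\sigma(\ultau')} \cap \gamma^0_{\sigma(\s'')}$ combined with pairwise disjointness of the initial curves (Observation~\ref{obs_config_crini}) forces $\sigma(\ultau') = \sigma(\s'')$; together these give $\s'' = \ultau'$ and so $z \in \gamma^0_{\ultau'}$, contradicting $z \notin \gamma^0_{\ultau'}$. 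For $k \geq 2$, any $z \in \gamma^k_{(\ultau',\star)} \setminus \gamma^0_{\ultau'}$ either lies in $\gamma^{k-1}_{(\ultau',\star)} \setminus \gamma^0_{\ultau'}$ (handled by the inductive hypothesis) or in $\gamma^k_{(\ultau',\star)} \setminus \gamma^{k-1}_{(\ultau',\star)}$, and in the latter case $f(z) \in \gamma^{k-1}_{(\sigma(\ultau'),\star)} \setminus \gamma^{k-2}_{(\sigma(\ultau'),\star)} \subseteq \gamma^{k-1}_{(\sigma(\ultau'),\star)} \setminus \gamma^0_{\sigma(\ultau')}$, so the inductive hypothesis applied to the shifted addresses $\sigma(\ultau'), \sigma(\s'')$ yields the contradiction.

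The step I expect to be the main obstacle is the geometric setup in the base case of the auxiliary lemma, specifically the containment $\gamma^1_{(\ultau',\star)} \subseteq F_0^{\ultau'}$; this rests on the slightly delicate observation that each base curve $\gamma^0_\s$ lies in $\mathcal{W}$, which in turn requires carefully reconciling the definitions of $\unbdd{F}$, the cut $\delta$, and the tracts of $f$.
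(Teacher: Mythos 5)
Your proposal is correct and follows essentially the same route as the paper: map forward under $f^n$, observe that $f^n(z)$ lies in a level-$0$ curve and in $\gamma^{m-n}_{(\sigma^n(\ultau),\star)}$, use disjointness of the level-$0$ curves to force $f^n(z)\in\gamma^0_{\sigma^n(\ultau)}$, and pull back by injectivity of $f^n$ on $\gamma^m_{(\ultau,\star)}$. The only difference is one of detail: the paper simply asserts that level-$0$ curves are contained only in higher-level curves whose addresses agree up to sign, whereas you prove the equivalent disjointness statement by induction via your auxiliary lemma.
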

In other words, if $z\in L(n)$ for some $n\in \N$, then all the $\Gamma$-curves $z$ belongs to, are determined by the curves of level $n$ it belongs to.
\begin{subproof}
If $z\in \gamma^{n}_{(\ultau, \star)}$ for some $(\ultau, \star)\in \Addr(f)_\pm$, then by Theorem \ref{thm_signed}, $z\in \gamma^{m}_{(\ultau, \star)}$ for all $m\geq n$. In order to prove the converse, suppose that $z\in \gamma^{m}_{(\ultau, \star)}$ for some $(\ultau, \star)$ and $ m>n$. Then, $f^n(z)\in \gamma^0_{(\sigma^n(\s), \ast)} \cup \gamma^{m-n}_{(\sigma^n(\ultau), \star)}$ by Theorem \ref{thm_signed}. However, since curves of level $0$ are pairwise disjoint and are contained only in curves of level $n$ that differ at most in the sign of their addresses, it must occur that $\sigma^n(\s)=\sigma^n(\ultau)$. This implies that $\gamma^0_{(\sigma^n(\ultau), \star)}=\gamma_{\sigma^n(\s)}^0$ and $z\in \gamma^{n}_{(\ultau, \star)}$.
\end{subproof}
For the rest of the proof, let us fix an arbitrary $(\s,\ast)\in \Addr(f)_\pm$. In order to prove \ref{itemA_Gamma}, we start recalling that all curves in a valid initial configuration are pairwise disjoint, and that by definition, $\gamma^0_{(\s, -)}=\gamma^0_\s=\gamma^0_{(\s, +)}$. Let $n\in \N$ be the smallest number such that $\gamma^n_{(\s, \ast)} \cap\Orb^{-}(\Crit(f)) \neq \emptyset$, and let $c_0$ be the point in that intersection of greatest potential in $\gamma^n_{(\s, \ast)}$. Then, there exists an inverse branch of $f^n$ defined in a neighbourhood of $f^n(\gamma^{\infty}_{c_0})\subset \gamma^0_{(\sigma^n(\s),\ast)}$ that maps $f^n(\gamma^{\infty}_{c_0})$ bijectively to $\gamma^{\infty}_{c_0}$, and thus, $\gamma^n_{(\s, -)}\cap \gamma^{\infty}_{c_0}=\gamma^n_{(\s, +)}\cap \gamma^{\infty}_{c_0}$. In particular, by the claim, $\gamma^{\infty}_{c_0}$ does not intersect any other canonical tails, and so \ref{itemA_Gamma} follows.
 
We prove items \ref{itemB_Gamma} and \ref{itemC_Gamma} simultaneously. Note that by the claim, all overlapping of $\gamma^m_{(\s, \ast)}$ with curves of level $n<m$ occur in $\gamma^n_{(\s, \ast)}\subset \gamma^m_{(\s, \ast)}$. By this and the definition of $\Gamma$-curves as a union of nested curves of level $m$, with $m\rightarrow \infty$, in order to prove \ref{itemB_Gamma} and \ref{itemC_Gamma}, it suffices to show that for any $n\geq 0$, \ref{itemB_Gamma} and \ref{itemC_Gamma} hold replacing in the statement of the proposition each $\Gamma(\s, \ast)$ by its restriction to $\gamma^n_{(\s, \ast)}$. We proceed to do so by induction on $n$. For $n=0$, since curves of level $0$ do not contain (preimages of) critical points, the statements hold trivially. Suppose that \ref{itemB_Gamma} and \ref{itemC_Gamma} hold for some $n\in \N$, and we shall see they hold for $n+1$.

Let us consider the curve $\gamma^{n+1}_{(\s, \ast)}$. By Theorem \ref{thm_signed}, $f(\gamma^{n+1}_{(\s, \ast)})=\gamma^{n}_{(\sigma(\s), \ast)}$ and by the inductive hypothesis, the statements hold for both $\gamma^{n}_{(\s, \ast)}$ and $\gamma^{n}_{(\sigma(\s), \ast)}$. In particular, since $\gamma^{n}_{(\s, \ast)}\subseteq \gamma^{n+1}_{(\s, \ast)}$, for all $\gamma^{i+1}_i$ and $c_i$ contained in $\gamma^{n}_{(\s, \ast)}$ for some $i\in I$, by the claim, \ref{itemB_Gamma} and \ref{itemC_Gamma} hold. Thus, if $\gamma^{n+1}_{(\s, \ast)}= \gamma^{n}_{(\s, \ast)}$, we are done. Otherwise, it suffices to prove that they hold for all 
$$c_i \in \beta\defeq \gamma^{n+1}_{(\s, \ast)}\setminus \gamma^{n}_{(\s, \ast)} \quad \text{ and } \quad\gamma^{i+1}_i \cap \beta \neq \emptyset \quad \text{ for some } i\in I.$$
If $\beta \cap \Crit(f)=\emptyset$, then, arguing as before, there exists a neighbourhood of $\beta$ that maps injectively to $\gamma^{n}_{(\sigma(\s), \ast)}$. In particular, for each curve of level $n$ that contains $f(\beta)$, there exists a unique curve of level $n+1$ that maps to it and also contains $\beta$. Then, by the inductive hypothesis applied to curves of level $n$, \ref{itemB_Gamma} and \ref{itemC_Gamma} hold for $\beta \cup \gamma^{n}_{(\s, \ast)}= \gamma^{n+1}_{(\s, \ast)}$. 

Otherwise, let $c \in \beta \cap \Crit(f)$ be the critical point of maximal potential in $\beta$. Then, by definition, the map $f$ acts like $z \mapsto z^{\deg(f,c)}$ locally around $c$. By the inductive hypothesis, $f(c)$ belongs to $N\defeq 2 \prod^{\infty}_{j=1}\deg(f,f^{j}(c))$ curves of level $n$ that by \ref{itemC_Gamma}, overlap pairwise. Let $\mathcal{L}(c)$ be the set of curves in $\beta \setminus \Crit(f)$ for which $c$ is an endpoint. Then, the cardinal of $\mathcal{L}(c)$ is either $\deg(f,c)\cdot N$, or $2\deg(f,c)\cdot N$, depending on whether $f(c)$ is or not the endpoint of $\gamma^{n}_{(\sigma(\s), \ast)}$. We will assume without loss of generality that the second case occurs, since the argument in the first one is a simplified version of the one to follow. Let us subdivide the curves in $\mathcal{L}(c)$ into the respective subsets $\mathcal{L}_b$ and $\mathcal{L}_u$ of curves that are mapped to the bounded or unbounded component of $\gamma^n_{(\sigma(\s), \ast)}\setminus \{f(c)\}$.

In particular, since $c\in \beta$, $\gamma^n_{(\s, \ast)}$ must belong to a curve in $\mathcal{L}_u$. Moreover, by Theorem \ref{thm_signed} and the inductive hypothesis, each curve in $\mathcal{L}_u$ contains a pair of curves $\gamma^n_{(\ultau, +)}$ and $\gamma^n_{(\ul{\alpha}, -)}$ for some $\ultau, \ul{\alpha}\in \Addr(f)$ such that $\sigma(\ultau)=\s=\sigma(\ul{\alpha})$. Since $f$ maps each curve in $\mathcal{L}_u$ injectively to $\gamma^n_{(\sigma(\s), \ast)}\setminus \{f(c)\}$, arguing as before, each curve in $\mathcal{L}_u$ belongs to two curves of level $n+1$ that extend the curves of level $n$ they contain. In addition, since curves of level $n+1$ are canonical tails, a curve from $\mathcal{L}_b$ is a bristle for each of them. In particular, following the extending criterion from \ref{dis_canonical}, each curve in $\mathcal{L}_b$ is both a left bristle for a left-extended curve in $\mathcal{L}_u$, and a right bristle for a right-extended curve in $\mathcal{L}_u$. In particular, one of these right and left-extended curves must belong to $\gamma^{n+1}_{(\s, \ast)}$, and we have shown that items \ref{itemB_Gamma} and \ref{itemC_Gamma} hold for the restriction of $\gamma^{n+1}_{(\s, \ast)}$ to the unbounded component of $\gamma^{n+1}_{(\s, \ast)}\setminus (\Crit(f)\setminus \{c\})$. If we denote the bounded component by $\delta$, repeating this process iteratively for each critical point in $\delta$, since $\delta$ is bounded, the process must converge and the statements follow.
\end{proof}

\begin{defn}[Signed addresses for escaping points]\label{def_addrpm} Under the assumptions of Theorem~\ref{thm_signed}, for each $z\in \mathcal{S} \supset I(f)$, we say that $z$ has \emph{signed (external) address} $(\s, \ast)$ if $ z\in \Gamma(\s, \ast)$, and we denote by $\Addr(z)_\pm$ the set of all signed addresses of $z$.
\end{defn}

\begin{observation}[Escaping points have at least two signed addresses]\label{obs_formula_addr} By Proposition~\ref{prop_overlapping_Gamma}, for each $z\in \mathcal{S}$,
\begin{equation}\label{eq_singed_adddr}
\# \Addr(z)_\pm= 2 \prod^{\infty}_{j=0}\deg(f,f^{j}(z))<\infty.
\end{equation}
Therefore, each point in $\mathcal{S}\supset I(f)$ has at least two signed addresses.
\end{observation}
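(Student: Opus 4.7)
The plan is to derive the observation as a direct bookkeeping corollary of Proposition \ref{prop_overlapping_Gamma}. By the last sentence of Theorem \ref{thm_signed}, $\mathcal{S}=\bigcup_{(\s,\ast)\in\Addr(f)_\pm}\Gamma(\s,\ast)$, so every $z\in\mathcal{S}$ lies on at least one $\Gamma$-curve and hence $\Addr(z)_\pm\neq\emptyset$, making Definition \ref{def_addrpm} meaningful. It will then suffice to count, for a given $z\in\mathcal{S}$, the $\Gamma$-curves through $z$.

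I would split into two cases according to whether $z\in\Orb^{-}(\Crit(f))$. If $z\in\Orb^{-}(\Crit(f))$, pick any $(\s,\ast)\in\Addr(z)_\pm$; then $z$ appears as one of the points $c_i$ in the decomposition \eqref{def_concat2} of $\Gamma(\s,\ast)$, and item \ref{itemB_Gamma} of Proposition \ref{prop_overlapping_Gamma} yields $\#\Addr(z)_\pm = 2\prod_{j\geq 0}\deg(f,f^j(z))$ on the nose (and in particular this count does not depend on the choice of $(\s,\ast)$). If $z\notin\Orb^{-}(\Crit(f))$, the product on the right equals $1$, so I must show $\#\Addr(z)_\pm=2$. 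Again fix any $(\s,\ast)\in\Addr(z)_\pm$. Either $\Orb^{-}(\Crit(f))\cap\Gamma(\s,\ast)=\emptyset$, in which case the dichotomy of Proposition \ref{prop_overlapping_Gamma} gives $\Gamma(\s,-)=\Gamma(\s,+)$, and items \ref{itemA_Gamma} and \ref{itemC_Gamma} applied to every other $\Gamma$-curve show that overlaps between distinct $\Gamma$-curves can only happen along the subcurves $\gamma^\infty_{c_0}$ or $\gamma^{i+1}_i$ — all of which meet $\Orb^{-}(\Crit(f))$ in their closures and are contained in the $\Gamma$-curves of a single underlying address $\s$ — so no other $\Gamma$-curve contains $z$; or $z$ lies on some $\gamma^\infty_{c_0}$, in which case \ref{itemA_Gamma} identifies the only $\Gamma$-curves through $z$ as $\Gamma(\s,-)$ and $\Gamma(\s,+)$; or $z$ lies on some $\gamma^{i+1}_i$, in which case \ref{itemC_Gamma} identifies exactly two $\Gamma$-curves, namely $\Gamma(\s,\ast)$ and the $\Gamma(\ultau,\star)$ described there.

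Finally, the product is finite by the argument in the remark following Proposition \ref{prop_overlapping_Gamma}: if $z\in\gamma^n_{(\s,\ast)}\setminus\gamma^{n-1}_{(\s,\ast)}$ for some (minimal) $n$, then by Theorem \ref{thm_signed} we have $f^n(z)\in\gamma^0_{(\sigma^n(\s),\ast)}\subset J^\infty_{\sigma^n(\s)}$, and valid initial configurations are by construction disjoint from $\Orb^{-}(\Crit(f))$. Hence $f^j(z)\notin\Crit(f)$ for all $j\geq n$ and only finitely many factors of $\prod_{j\geq 0}\deg(f,f^j(z))$ can exceed $1$. The concluding sentence ($\#\Addr(z)_\pm\geq 2$) is then immediate since each factor is at least $1$ and is multiplied by $2$.

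The only mild obstacle is ensuring that in the second alternative of the non-critical case, no additional $\Gamma$-curves sneak through $z$ beyond those furnished by \ref{itemA_Gamma} or \ref{itemC_Gamma}; this is handled by the Claim proved inside the proof of Proposition \ref{prop_overlapping_Gamma}, which says that the $\Gamma$-curves through any point in $\mathcal{S}$ are already determined by the curves of some finite level $n$ through it, so no new overlaps can appear at higher levels.
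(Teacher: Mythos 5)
Your proposal is correct and takes essentially the same route as the paper: the paper states the observation as an immediate consequence of Proposition~\ref{prop_overlapping_Gamma} without further proof, and your case analysis (critical preimages via item~\ref{itemB_Gamma}; non-critical points via the dichotomy and items~\ref{itemA_Gamma}, \ref{itemC_Gamma}; finiteness via the remark following the proposition) is precisely the bookkeeping the paper leaves implicit.
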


\begin{observation}[Universality of canonical tails]\label{obs_universalcanonical} We note that the concept of \textit{canonical tail} for a criniferous function $f\in \B$ is defined in \ref{dis_canonical} independently of the choice of fundamental domains, and thus external addresses, for $f$. However, for each choice of $\Addr(f)_\pm$, since any canonical tail, or more generally any ray tail, escapes uniformly to infinity, it must contain some Julia constituent. Then, it follows from Theorem \ref{thm_signed} and Proposition \ref{prop_overlapping_Gamma} that if $\gamma \subset I(f)$ is a canonical tail, then there exists $(\s, \ast) \in \Addr(f)_\pm$ such that $\gamma=\gamma^n_{(\s, \ast)}$ for some $n\geq 0$.
\end{observation}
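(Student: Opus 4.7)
The plan is to combine Theorem \ref{thm_signed} and Proposition \ref{prop_overlapping_Gamma} with basic properties of ray tails. Given a canonical tail $\gamma$ of $f$, since $\gamma$ is in particular a ray tail escaping uniformly to infinity, for $N$ sufficiently large $f^N(\gamma)$ is disjoint from $\overline{D} \cup \delta$; being connected, $f^N(\gamma)$ must lie inside a single fundamental domain, say $F_N$. Iterating, $f^{N+k}(\gamma) \subseteq \unbdd{F}_{N+k}$ for all $k \geq 0$, and this tail of fundamental domains extends (by appending any valid prefix) to an admissible address $\s \in \Addr(f)$. Then $f^N(\gamma)$ is a closed, unbounded, connected subset of $J_{\sigma^N(\s)}$ on which iterates escape uniformly, so Theorem \ref{thm_props_Js}\ref{itemb_2.5} together with the maximality built into the definition of $J^\infty_{\sigma^N(\s)}$ yields $f^N(\gamma) \subseteq J^\infty_{\sigma^N(\s)}$. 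This in particular shows that a suitable iterate of $\gamma$ maps into a Julia constituent, establishing the first assertion of the observation.

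For the identification $\gamma = \gamma^n_{(\s, \ast)}$, I would invoke Theorem \ref{thm_signed}, which guarantees $\gamma \subset I(f) \subset \mathcal{S} = \bigcup_{(\ultau, \star) \in \Addr(f)_\pm} \Gamma(\ultau, \star)$, so every point of $\gamma$ belongs to some $\Gamma$-curve. The canonical structure of $\gamma$ (all iterates right-extended or all left-extended, of the same type) selects a unique sign $\ast \in \{-, +\}$; meanwhile, Proposition \ref{prop_overlapping_Gamma}\ref{itemC_Gamma} tells us that distinct $\Gamma$-curves overlap only along bounded arcs between (preimages of) critical points. Combining these two facts, the entire unbounded curve $\gamma$ must lie in a single $\Gamma(\s, \ast)$, with $\s$ obtained as above.

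Finally, since $\Gamma(\s, \ast) = \bigcup_{n \geq 0} \gamma^n_{(\s, \ast)}$ is a nested union of ray tails arising from iterated pullbacks with consistent right/left extensions at critical points---exactly the operation defining canonical tails---the ray tail $\gamma$ must coincide with one of the stages $\gamma^n_{(\s, \ast)}$. The main obstacle I anticipate is verifying that $\gamma$'s finite endpoint agrees with that of some $\gamma^n_{(\s, \ast)}$, rather than falling strictly between two consecutive stages of the nested union. This should follow from the recursive structure of Theorem \ref{thm_signed}: the finite endpoints $e_n$ of $\gamma^n_{(\s, \ast)}$ are iterated preimages under $f$ of the finite endpoint of $J^\infty_{\sigma^n(\s)}$, and the canonicity of $\gamma$ forces its own pullback history to trace exactly this path, synchronising the levels.
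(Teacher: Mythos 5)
Your proposal is correct and follows essentially the same route as the paper, which states this as an observation justified only by noting that a ray tail eventually acquires an external address and then citing Theorem \ref{thm_signed} and Proposition \ref{prop_overlapping_Gamma}; your version simply fills in the same steps (far end of $\gamma$ lies in a Julia constituent, $\gamma\subset\mathcal{S}$ is covered by $\Gamma$-curves, the overlap structure and the fixed left/right extension type pin down a single $\Gamma(\s,\ast)$, and the bristle-by-bristle construction synchronises $\gamma$ with a stage $\gamma^n_{(\s,\ast)}$). One small imprecision: the address $\s$ is not obtained by ``appending any valid prefix'' to the eventual itinerary --- it must be the actual itinerary of the far end of $\gamma$ (equivalently, the address of the $\Gamma$-curve containing it), which is well defined since $f^k(\gamma(t))$ lies in the tracts for all $k$ once $t$ is large.
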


\begin{observation}[Landing of canonical rays implies landing of all rays]\label{rem_mother}
For a criniferous function $f \in \B$ such that $J(f)\cap \AV(f)=\emptyset$, showing that all its canonical rays land suffices to conclude that all its dynamic rays land. This is because we have shown in Theorem \ref{thm_signed} that $I(f)\subset \mathcal{S}= \bigcup_{(\s, \ast) \in \Addr(f)_\pm} \Gamma(\s, \ast)$, and so, any dynamic ray $\gamma$ must belong to $\mathcal{S}$. In particular, if a ray $\gamma$ is canonical, then by Observation \ref{obs_universalcanonical}, it must be contained in $\Gamma(\s, \ast)$ for some signed address $(\s, \ast)$, and if $\gamma$ is not canonical, then $\gamma$ must be a concatenation at (preimages of) critical points of pieces of ray tails, where instead of extending as in Definition \ref{def_leftrightextensions}, different choices of bristles are made.
\end{observation}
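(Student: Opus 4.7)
The plan is to reduce the landing of an arbitrary dynamic ray $\gamma$ to the landing of some canonical ray by exploiting Theorem~\ref{thm_signed}. Since $\gamma \subset I(f) \subset \mathcal{S} = \bigcup_{(\s, \ast) \in \Addr(f)_\pm} \Gamma(\s, \ast)$, every point of $\gamma$ belongs to some $\Gamma$-curve, and the task becomes identifying which $\Gamma$-curve controls the landing behaviour of $\gamma$ as $t\to 0^+$.

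First I would dispose of the canonical case. If $\gamma$ is itself a canonical ray, Observation~\ref{obs_universalcanonical} implies that $\gamma$ is contained in $\Gamma(\s, \ast)$ for some signed address $(\s, \ast)$. Since both $\gamma$ and $\Gamma(\s, \ast)$ are maximal injective curves in $I(f)$ sharing a common ray tail near infinity, they must coincide, so $\gamma$ lands by hypothesis.

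Next I would treat the non-canonical case. Each connected component of $\gamma \setminus \Orb^{-}(\Crit(f))$ is a piece of ray tail, and by Observation~\ref{obs_universalcanonical} is contained in some canonical ray $\Gamma(\s, \ast)$; at each critical preimage on $\gamma$, the bristle choice differs from the canonical convention of Definition~\ref{def_leftrightextensions}. Provided there exists a smallest parameter $t^\ast > 0$ below which $\gamma$ encounters no preimage of a critical point, the restriction $\gamma|_{(0, t^\ast]}$ coincides with the innermost subarc of some $\Gamma(\s, \ast)$, and hence lands at the same point, which yields the landing of $\gamma$.

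The main obstacle is handling the possibility that preimages of critical points on $\gamma$ cluster at the putative endpoint, in which case no such $t^\ast$ exists and the argument above breaks down. To rule out this pathology, or at least to show it does not obstruct landing, I would combine the uniform expansion on tracts from Proposition~\ref{prop_exp_tracts} with the Hausdorff dimension bound of Proposition~\ref{prop_hausforff} for non-escaping subsets of a given Julia constituent, arguing that the landing points of the canonical rays hosting the successive subarcs of $\gamma$ form a Cauchy sequence whose limit is the unique landing point of $\gamma$.
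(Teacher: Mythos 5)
Your main reduction is the same as the paper's (the paper's justification is essentially the two sentences inside the observation itself): use $I(f)\subset\mathcal{S}$ to place any dynamic ray inside the union of $\Gamma$-curves, identify a canonical ray with a $\Gamma$-curve via Observation~\ref{obs_universalcanonical}, and note that a non-canonical ray differs from canonical ones only in the bristle choices made at points of $\Orb^{-}(\Crit(f))$, so that below its last critical preimage it runs along a single piece of some $\Gamma(\s,\ast)$ and inherits that curve's landing behaviour. Where you go beyond the paper is in flagging the case where the critical preimages on $\gamma$ accumulate at the putative endpoint; this case is real (the concatenation \eqref{def_concat2} may be infinite, as the $\cosh^2$ example already shows), and the paper is silent about it. However, your proposed treatment of it does not work as stated: the canonical rays hosting the successive subarcs $\gamma\vert_{[t_{i+1},t_i]}$ branch off from $\gamma$ at each $c_{i+1}$ and then land wherever their own canonical continuations take them, so there is no reason their landing points should form a Cauchy sequence converging to the accumulation set of $\gamma$ — they may land far from it. What one would actually need is that the Euclidean diameters of the subarcs $\gamma\vert_{[t_{i+1},t_i]}$ tend to zero (making the sequence $\{c_i\}$ itself Cauchy), and neither Proposition~\ref{prop_exp_tracts} nor Proposition~\ref{prop_hausforff} yields that directly. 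Since the paper's own observation does not address this case either, this is not a defect relative to the paper's argument, but your final paragraph is a conjecture rather than a proof and should be labelled as such.
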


\begin{observation}[Equivalence of orders]\label{obs_same_order}
We note that locally, the anticlockwise order of radial segments used in Definition \ref{def_leftrightextensions} agrees with the order in \eqref{eq_orderAfpm} for the addresses in $\Addr(f)_\pm$, since in \eqref{eq_orderinfty}, we chose positive orientation. More precisely, we aim to show in \S \ref{sec_hands} that by construction, given a converging sequence of points $\{z_k\}_k \subset \mathcal{S}$, for each $k>0$, there is $(\ul{s}_k, \ast_k) \in \Addr(z_k)_\pm$ such that 
\begin{equation}\label{eq_convernewaddr}
\text{if} \quad z_k\rightarrow z , \quad \text{ then } \quad (\s_k, \ast_k)\rightarrow (\s, \ast) \quad \text {as } \quad k\rightarrow \infty
\end{equation}
for some $(\s, \ast)\in \Addr(z)_\pm$. Compare to \eqref{eq_sktos}. 
\end{observation}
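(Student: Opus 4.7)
The plan is to exploit the fundamental hands $\tau_n(\s, \ast) \supset \gamma^n_{(\s, \ast)}$ that will be constructed in \S \ref{sec_hands}. These hands are the open neighborhoods on which the inverse branches \eqref{eq_inversebranch} are defined; by their construction they will agree across signed addresses lying in a common open cyclic interval of $(\s, \ast)$, and by hyperbolic contraction (Proposition \ref{prop_exp_tracts}) their Euclidean diameters near any fixed point of $\mathcal{S}$ decay to zero as $n \to \infty$. Thus, as $n$ varies, the collection of hands through a fixed $z \in \mathcal{S}$ furnishes a neighborhood base that is compatible with the topology $\tau_A$, which is what is needed for continuity.

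Concretely, I would argue as follows. Fix $z_k \to z$ in $\mathcal{S}$. By Observation \ref{obs_formula_addr} the set $\Addr(z)_\pm$ is finite, and I enumerate it as $\{(\s^{(1)}, \ast^{(1)}), \ldots, (\s^{(N)}, \ast^{(N)})\}$. For every level $n$, the hands $\tau_n(\s^{(j)}, \ast^{(j)})$, $j = 1, \ldots, N$, cover a punctured Euclidean neighborhood of $z$, so for $k$ large enough $z_k$ lies in at least one of them. Each such membership will furnish a signed address of $z_k$ itself, lying in the open cyclic interval of $(\s^{(j)}, \ast^{(j)})$ on which that hand is defined, and the sizes of these intervals in $\tau_A$ shrink to zero as $n \to \infty$ because the hands coincide only on addresses that agree in increasingly many entries. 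A diagonal procedure in $k$ and $n$, combined with pigeonholing on the finitely many indices $j$, then produces the desired $(\s_k, \ast_k) \in \Addr(z_k)_\pm$ and $(\s, \ast) \in \Addr(z)_\pm$ realizing \eqref{eq_convernewaddr}.

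The principal obstacle is the case in which $z$ is an iterated preimage of a critical point, so that $N > 2$ and the $\Gamma$-curves through $z$ genuinely split. Here a neighborhood of $z$ is partitioned into sectors by the bristles of Definition \ref{def_leftrightextensions}, and the limit signed address $(\s, \ast)$ must correspond to the sector from which $z_k$ approaches $z$. Verifying this reduces to checking that the sign convention $\{-\} \prec \{+\}$ in \eqref{eq_linear_addr} matches the anticlockwise ordering of left and right bristles at each critical preimage encountered by the backward orbit of the initial configuration, which is precisely the content of the first sentence of Observation \ref{obs_same_order}. That compatibility in turn follows from the choice of positive orientation in \eqref{eq_orderinfty} and the orientation-preserving nature of the holomorphic inverse branches \eqref{eq_inversebranch}, propagated inductively through the construction in Theorem \ref{thm_signed}.
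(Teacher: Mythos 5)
Your proposal is correct and follows essentially the paper's own route: the Observation is an explicit forward reference to \S\ref{sec_hands}, and the ingredients you list --- the fundamental hands $\tau_n(\s,\ast)$, the open intervals $\mathcal{I}^n(\s,\ast)$ of signed addresses on which these hands and their inverse branches agree (Proposition \ref{prop_hands_fortails} and Theorem \ref{lem_inverse_hand}), the shrinking of those intervals with $n$, and the compatibility of the anticlockwise bristle order with the sign convention $\{-\}\prec\{+\}$ propagated by orientation-preserving inverse branches --- are exactly the mechanism the paper deploys there. The only caveat is that your covering claim should be restricted to points of $\mathcal{S}$ near $z$ rather than a full punctured Euclidean neighbourhood (points whose early iterates enter $D$ lie in no hand of level $n$), but this does not affect the argument since \eqref{eq_convernewaddr} only concerns sequences in $\mathcal{S}$.
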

\section{Fundamental hands and inverse branches}\label{sec_hands}
The standing assumptions for any $f$ in this section are the following: 
\begin{itemize}
\item $f\in \B$ is criniferous with $J(f)\cap \AV(f)=\emptyset$,
\item $P(f)\setminus I(f)$ is bounded and the set $S_I\defeq S(f)\cap I(f)$ is finite.
\end{itemize}

Recall that by Theorem~\ref{thm_signed}, we can define for each $(\s,\ast) \in \Addr(f)_\pm$ and $n\geq 0$, a canonical tail $\gamma^n_{(\s, \ast)}$ such that $f^n$ maps $\gamma^n_{(\s, \ast)}$ bijectively to $\gamma^0_{(\sigma^n(\s), \ast)}$. Using this, in this section, we define for each $(\s,\ast) \in \Addr(f)_\pm$ and $n\geq 0$ an inverse branch of $f^n$ in a neighbourhood $U$ of $\gamma^0_{(\sigma^n(\s), \ast)}$ such that the image of this inverse branch contains $\gamma_{(\s, \ast)}^n$. In addition, $U$ will be chosen so that there exists an interval of signed addresses containing $(\s, \ast)$ such that the same inverse branch with analogous properties can be taken for all addresses in the interval; see Theorem \ref{lem_inverse_hand}. We are able to achieve this result thanks to the consistency on taking always either right or left extensions in the definition of canonical tails, together with the equivalence of orders pointed out in Observation \ref{obs_same_order}. In order to define these inverse branches, we introduce the concept of \textit{fundamental hands}, that in a rough sense are $n$-th preimages of certain simply connected subsets of fundamental domains, where $f^n$ is injective, see Definition \ref{def_hands}.% Moreover, we point out that for us, the existence of these inverse branches plays an important role in the construction of the desired semiconjugacy of Theorem ~\ref{thm_main_intro}, as they allow us to define a sequence of \textit{continuous} functions that are successive better approximations of the map $\phi$ from \eqref{eq_phi_intro}. This will be reflected in Proposition \ref{new_prop_contphi_k}. See also Figure \ref{figure_defphi_n}.

\begin{remark}
We suggest the reader familiar with \cite{lasse_dreadlocks}, to compare the notion of fundamental hands with that of \textit{fundamental tails} for postsingularly bounded functions, introduced in \cite[Section 3]{lasse_dreadlocks} in order to define \textit{dreadlocks}. Since for the functions we consider in this section, we face the additional challenge of the presence of critical points in their escaping sets, the existence of a sensible generalization of the concept of dreadlocks for these functions is a priori not obvious to us. See \cite[\S 5.2]{mio_thesis} for further discussion on this topic.
\end{remark}

We start by finding a convenient choice of parameters that determine an alphabet of fundamental domains, with the aim of simplifying arguments in future proofs. Recall that by Theorem \ref{thm_signed}, every point in $S_I$ is the endpoint of at least one canonical tail. 

\begin{prop}[Parameters to define fundamental domains]\label{prop_PF_fundamental}
Let $f\in \B$ satisfying the standing hypotheses. For each $z \in S_I$, let $\gamma_z$ be a canonical tail with finite endpoint $z$. Then, there exists a Jordan domain $D\supset S(f) \cup (P(f)\setminus I(f))$ and an arc $\delta \subset \C \setminus\overline{f^{-1} (\C\setminus D)}$ connecting a point of $\overline{D}$ to infinity such that
\begin{equation}\label{eq_W0}
W_{-1} \defeq \C\setminus( \overline{D}\cup\delta)
\end{equation}
has the following property: if for some $n\geq 1$, a connected component $\tau$ of $f^{-n}(W_{-1})$ contains a point $z \in S_I$, then $\gamma_z \subset \tau$. Moreover, for any such $\tau$, $(P(f)\setminus I(f))\cap \tau=\emptyset$.
\end{prop}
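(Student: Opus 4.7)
The strategy is to enlarge a starting Jordan domain in a controlled way so that the orbits of $v_i \in S_I$ and the tails $\gamma_{v_i}$ interact with it predictably, and then pick $\delta$ in the gaps between tract closures. I would begin by choosing any Jordan domain $D_0$ containing the bounded set $S(f) \cup (P(f)\setminus I(f))$, and writing $S_I = \{v_1, \dots, v_k\}$. The key input is that each $\gamma_{v_i}$ is a canonical tail (so in particular a ray tail), hence $f^n(\gamma_{v_i}) \to \infty$ uniformly in $n$; for every compact set $K$, the set $\{n \geq 0 : f^n(\gamma_{v_i}) \cap K \neq \emptyset\}$ is therefore finite, and the orbit $\{f^n(v_i)\}_n$ tends to $\infty$.

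I would then enlarge $D_0$ to a Jordan domain $D$ enjoying, for each $i$, the two properties (a) $\{n \geq 0 : f^n(v_i) \in \overline{D}\}$ is an initial segment $\{0, 1, \dots, N_i\}$ (``no re-entry''), and (b) $f^n(v_i) \notin \overline{D}$ implies $f^n(\gamma_{v_i}) \cap \overline{D} = \emptyset$. An iterative procedure achieves this: at each step I absorb into the current domain the finitely many orbit points $f^n(v_i)$ violating (a), together with the finitely many starting points whose tails violate (b), and take a Jordan enclosure of the resulting compact set. Uniform escape of each $\gamma_{v_i}$ together with the finiteness of $S_I$ forces the procedure to stabilise after finitely many rounds, and the final $D$ still contains $S(f) \cup (P(f)\setminus I(f))$.

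The choice of $\delta$ is then essentially forced. Properties (a) and (b) together imply that whenever $f^n(v_i) \notin \overline{D}$, also $f^{n+1}(v_i) \notin \overline{D}$ and so $f^{n+1}(\gamma_{v_i}) \cap \overline{D} = \emptyset$, which means $f^n(\gamma_{v_i}) \subset f^{-1}(\C\setminus\overline{D})$: each such image tail lies inside a single tract. I would pick $\delta$ as an arc from $\partial D$ to $\infty$ in $\C \setminus \overline{f^{-1}(\C\setminus D)}$, which is possible because the complement of the tract closures in $\C \setminus \overline{D}$ has a component reaching infinity (the tract endings at infinity form a discrete sequence). Such a $\delta$ automatically avoids every image tail $f^n(\gamma_{v_i})$ with $f^n(v_i) \notin \overline{D}$, since those tails are contained in tract closures.

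To verify the conclusion, suppose $\tau$ is a connected component of $f^{-n}(W_{-1})$ containing some $z = v_i \in S_I$. Then $f^n(v_i) \in W_{-1} \subset \C\setminus\overline{D}$, so by (b) and the choice of $\delta$, $f^n(\gamma_{v_i}) \subset W_{-1}$, which gives $\gamma_{v_i} \subset f^{-n}(W_{-1})$; since $\gamma_{v_i}$ is connected and contains $v_i \in \tau$, we obtain $\gamma_{v_i} \subset \tau$. For the \emph{moreover} part, note that $P(f) \setminus I(f)$ is forward-invariant (non-escaping orbits remain non-escaping and postsingular) and contained in $D$, so no point of it lies in $f^{-n}(W_{-1})$, and in particular not in $\tau$. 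The main obstacle is the enlargement argument: one needs to balance the fact that making $D$ larger may cause new tails to cross $\overline{D}$ against the uniform-escape bound limiting the total number of such crossings, and thereby argue that the iterative enlargement terminates after finitely many steps.
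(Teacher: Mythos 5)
Your overall strategy---enlarge a domain so that the forward orbits of the points of $S_I$ and their tails interact with it predictably, then route $\delta$ through the complement of the tract closures---is the same as the paper's, and your deduction of the conclusion from properties (a) and (b) is correct, as is the \emph{moreover} part via forward invariance of $P(f)\setminus I(f)$. The gap is exactly where you flag it: the stabilisation of the iterative enlargement is asserted but not proved, and it does not follow from what you cite. Each round adds finitely many orbit points $f^n(v_i)$, but the enlarged compact set $\overline{D_{k+1}}$ may now meet tails $f^m(\gamma_{v_j})$ with $m$ larger than any index handled at round $k$ (a newly absorbed point $f^n(v_i)$ can perfectly well lie on, or arbitrarily close to, such a later tail---these curves overlap and accumulate on one another). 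Fixing those new violations forces further absorptions, and since your domains are not confined to any fixed compact set, uniform escape only bounds the number of violations \emph{per round}, not the number of rounds. A cascade in which absorbing the endpoint of the $n$-th offending tail catches the $(n+1)$-st is not excluded.

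The paper avoids the iteration by a one-shot, two-radius construction. It first fixes tracts $\T_{\D_{R_0}}\defeq f^{-1}(\C\setminus\D_{R_0})$ for a disk $\D_{R_0}\supset S(f)\cup(P(f)\setminus I(f))$, finds $N(z)$ with $f^n(\gamma_z)\subset\T_{\D_{R_0}}$ for $n\geq N(z)$, and then a single radius $R_1$ so that \emph{every} image tail satisfies $f^n(\gamma_z)\subset\D_{R_1}\cup\T_{\D_{R_0}}$; an arc $\tilde\delta$ chosen outside $\overline{\T}_{\!\D_{R_0}}\cup\D_{R_1}$ therefore misses all image tails at once. For the domain, it takes $M(z)$ with $f^m(\gamma_z)\cap\overline{\D}_{R_1}=\emptyset$ for $m\geq M(z)$, encloses the finitely many ``early'' orbit points $f^n(z)$, $n\leq M(z)$, in a fixed disk $\D_Q$, and chooses $D$ as a (non-round) Jordan domain squeezed between $\D_{R_1}$ together with those points and $\D_Q$, weaving around the finitely many ``late'' tails that enter $\D_Q$. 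Because the final domain is constrained inside the fixed disk $\D_Q$, only finitely many tails ever need to be avoided and no recursion occurs. To repair your argument you would need an analogous a priori compact confinement of your final domain; without it, the termination claim is unsupported.
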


\begin{proof}
By the assumptions on $f$, we can choose a disk $\D_{R_0}$ for some $R_0>0$ sufficiently large so that $S(f) \cup (P(f)\setminus I(f)) \subset \D_{R_0}$. Let $\mathcal{T}_{\D_{R_0}}\defeq f^{-1}(\C \setminus \D_{R_0})$ be the corresponding set of tracts. Since each ray tail escapes to infinity uniformly, see Definition \ref{def_ray}, for each $z \in S_I$, there exists a natural number $N(z)>0$ such that $f^n(\gamma_z)\subset \mathcal{T}_{\D_{R_0}}$ for all $n\geq N(z)$. Let us consider the set of ray tails
$$\mathcal{R} \defeq \{f^n(\gamma_z): z\in S_I \text{ and } 0 \leq n\leq N(z)\},$$
which has finitely many elements as, by assumption, $\#S_I$ is finite. Note that if $\gamma$ is a ray tail, then by definition, $\lim_{t \to \infty}f(\gamma(t))=\infty$, and in particular, there exists a constant $R(\gamma)>0$ such that $\gamma\setminus \D_{R(\gamma)} \subset \mathcal{T}_{\D_{R_0}}$. Since $\#\mathcal{R}<\infty$, there exists a finite constant
$$R_1\defeq \max_{\gamma \subset \mathcal{R}} R(\gamma).$$
Thus, for all $\gamma \in \mathcal{R}$, $\gamma\setminus \D_{R_1} \subset \mathcal{T}_{\D_{R_0}}$.
%every $z \in (S_I)$, $f^n(\gamma_z)\setminus \D_{R_1} \subset \mathcal{T}_D$ for all $n\geq 0$.
Let us define tracts for $f$ with respect to $\D_{R_1}$, that is, $ \T_{\D_{R_1}}\defeq f^{-1}(\C \setminus \D_{R_1}).$ We can assume without loss of generality that $\D_{R_0}\subset \D_{R_1}$, since otherwise we can replace $R_1$ by a bigger constant. Thus, it holds that $\T_{\D_{R_1}} \subset \T_{\D_{R_0}}$, and by construction, for all $n\geq 0$ and $z\in S_I$, $f^n(\gamma_z)\subset (\D_{R_1} \cup \T_{\D_{R_0}})$. Consequently, we can choose a curve $\tilde{\delta}\subset \C \setminus (\overline{\mathcal{T}}_{\!\D_{R_0}}\cup \D_{R_1})$ connecting a point in $\partial \D_{R_1}$ to infinity. In particular,
\begin{equation}\label{eq_specialdef_fund}
f^n(\gamma_z) \cap \tilde{\delta} =\emptyset \quad \text{ for all } \quad z\in S_I \quad \text{ and } \quad n\geq 0. 
\end{equation}
Note that \eqref{eq_specialdef_fund} is equivalent to 
\begin{equation*}
\bigcup_{z\in S_I} \gamma_z \;\cap \; \bigcup_{n\geq 0}f^{-n}(\tilde{\delta})= \emptyset.
\end{equation*}
However, if we defined a set $W_{-1}$ as $\C\setminus(\overline{\D}_{R_1}\cup \tilde{\delta})$, then $W_{-1}$ would not satisfy the property on connected components of its preimages specified in the statement, since the curves $\{\gamma_z\}_{z\in S_I}$ could a priori intersect some preimage of $\partial \D_{R_1}$. Note that such property is equivalent to saying that, for some appropriate $\overline{D}$ and $\delta$, if $f^n(z) \in \C\setminus (\overline{D}\cup \delta)$ for some $z \in S_I$ and $n\geq 1$, then $f^{n}(\gamma_z) \subset \C\setminus( \overline{D}\cup\delta).$ Hence, in order to define $W_{-1}$ satisfying the property we are looking for, by \eqref{eq_specialdef_fund}, it suffices to find a domain $D\supset \D_{R_1}$ and a curve $\delta \subset \tilde{\delta}$ such that $\overline{D}\cap \delta$ is a single point, and so that 
\begin{equation}\label{eq_proofW0}
\text{if } \quad z \in S_I \text{ and } f^n(z) \in (\C \setminus \overline{D}) \text{ for some }n\in \N, \quad \text{ then } \quad f^{n}(\gamma_z) \subset (\C \setminus \overline{D}). 
\end{equation}
Thus, our next aim is to find a domain $D$ for which \eqref{eq_proofW0} holds.

Arguing as before, by definition of ray tail, for each $z \in S_I$, there exists a constant $M(z)\in \N$ such that $f^{m}(\gamma_z)\subset(\C \setminus \overline{\D}_{R_1})$ for all $m\geq M(z)$. Hence, there exists a constant $Q\geq R_1$ such that 
$$\D_Q \supset \left\{f^n(z) : \ z\in S_I \ \text{ and } \ 0 \leq n\leq M(z) %\text{ and } f^n(\gamma_z)\cap \D_{R_1}\neq \emptyset
\right\}\eqdef \mathcal{P},$$
where the set $\mathcal{P}$ has only finitely many elements. As before, by definition of ray tail, only finitely many rays that are of the form $f^m(\gamma_z)$ for some $m\geq M(z)$ and $z \in S_I$ intersect $\D_Q$. Hence, we can find a domain $D$ such that $\D_{R_1} \cup \mathcal {P} \subset D \subset \D_Q$, and so that $D \cap f^m(\gamma_z) =\emptyset$ for all $m\geq M(z)$ and $z \in S_I$. This means that since $\mathcal{P}\subset D$, the hypothesis in \eqref{eq_proofW0} can only hold for $f^m(z)$ with $m\geq M(z)$, but by construction and since $\T_D\subset T_{\D_{R_1}}$, the thesis in \eqref{eq_proofW0} always holds for these cases. Thus, \eqref{eq_proofW0} is always true for our choice of $D$. Defining $\delta$ as the unbounded connected component of $\tilde{\delta}\setminus D$, the proof of the first part of the statement is concluded. The fact that for any connected component $\tau$ of $f^{-n}(W_{-1})$ it holds that $(P(f)\setminus I(f))\cap \tau=\emptyset$, is a consequence of $(P(f)\setminus I(f))\subset \D_{R_1}\subset D$ and $(P(f)\setminus I(f))$ being forward-invariant.
\end{proof}

\noindent We are now ready to define the basic objects of this section.
\begin{defn}[Fundamental hands]\label{def_hands} Following Proposition \ref{prop_PF_fundamental}, let $W_{-1}$ be the domain from \eqref{eq_W0}. Then, for each $n\geq 0$, define inductively $$ X_n\defeq \bigcup_{z\in W_{n-1} \cap S_I} \gamma_z \qquad \text{ and } \qquad W_n\defeq f^{-1}\left(W_{n-1} \setminus X_n\right).$$
For every $n\geq 0$, each connected component of $W_n$ is called a \textit{fundamental hand} of level $n$.
\end{defn}

That is, in a rough sense, we take successive preimages of $W_{-1}$, removing at each step some curves in $\{\gamma_z\}_{z \in S_I}$ whenever a point in $S_I$ belongs to some component of a preimage. In particular, $X_0=\emptyset$ and fundamental hands of level $0$ are fundamental domains for $f$. The choice of $W_{-1}$ in Proposition \ref{prop_PF_fundamental} has been made so that the following basic properties of fundamental hands hold:
\begin{prop}[Facts about fundamental hands]\label{prop_basic_hand} Fundamental hands are unbounded, simply connected, and any two of the same level are pairwise disjoint. Moreover, each fundamental hand of level $n>1$ is mapped univalently under $f$ to a fundamental hand of level $n-1$.
\end{prop}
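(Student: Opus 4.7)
The plan is to argue by induction on the level $n$, maintaining all four assertions simultaneously. The base case $n = 0$ is immediate: fundamental hands of level~$0$ are, by definition, the fundamental domains of $f$, which are unbounded, simply connected, and pairwise disjoint by the standard theory of class $\B$ recalled in Section~\ref{sec_symbolic}; the univalence clause is vacuous at this level.

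For the inductive step, I would suppose the four properties hold at level $n - 1$ and propagate them to level $n$. The key auxiliary statement that I carry along with the induction is the claim: for every $z \in W_{n-1} \cap S_I$, the chosen canonical tail $\gamma_z$ lies entirely in the unique fundamental hand $H'$ of level $n - 1$ through~$z$. For $n = 1$ this is exactly the defining property of $W_{-1}$ given by Proposition~\ref{prop_PF_fundamental}. For $n \geq 2$, iterating the inductive univalence yields an injective branch of $f^{-(n-1)}$ sending a fundamental domain onto $H'$; applying Proposition~\ref{prop_PF_fundamental} to that fundamental domain to obtain containment of $f^{n-1}(\gamma_z)$, and pulling this containment back through the injective branch, gives $\gamma_z \subset H'$---provided the preimage curves removed at earlier stages cannot intersect $\gamma_z$ inside the component of $f^{-n}(W_{-1})$ through $z$, a fact I would deduce from Proposition~\ref{prop_overlapping_Gamma} on overlapping of canonical tails.

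With the auxiliary claim established, $X_n$ decomposes as a finite union of pairwise disjoint properly embedded arcs, each lying inside a single hand of level $n - 1$ and running from an interior point to infinity. Removing such arcs from a simply connected unbounded planar region yields simply connected unbounded components. Moreover, the standing hypothesis $P(f) \setminus I(f) \subset D$, combined with the forward-invariance of $D$ under non-escaping orbits, implies that every singular value of $f$ meeting $W_{n-1}$ belongs to $S_I$ and is therefore removed in $X_n$; hence $W_{n-1} \setminus X_n$ avoids $S(f)$. Consequently $f$ restricts to a covering map $f^{-1}(W_{n-1} \setminus X_n) \to W_{n-1} \setminus X_n$, so over each simply connected component $C$ of the target this cover is trivial: each component of $f^{-1}(C)$ is mapped homeomorphically by $f$ onto $C$. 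Each such component is a hand of level $n$, univalently mapped by $f$ into the hand of level $n - 1$ containing $C$, and inheriting simple connectivity and unboundedness from $C$ via this homeomorphism. Pairwise disjointness is immediate from the definition as connected components.

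The principal obstacle is the auxiliary claim for $n \geq 2$: showing that $\gamma_z$ is not separated from $H'$ by the curves removed at earlier stages. This rests on the careful construction of $W_{-1}$ in Proposition~\ref{prop_PF_fundamental} together with the overlapping structure from Proposition~\ref{prop_overlapping_Gamma}, and it is the most delicate ingredient in the argument.
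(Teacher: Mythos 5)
Your proposal is correct and follows essentially the same route as the paper: induction on the level, observing that $X_n$ consists of unbounded escaping arcs whose removal from a simply connected unbounded hand of level $n-1$ leaves simply connected unbounded pieces, that $W_{n-1}\setminus X_n$ contains no singular values (since $P(f)\setminus I(f)\subset D$ forces $W_{n-1}\cap S(f)\subset S_I$), and that a covering map onto a simply connected target is a homeomorphism on each component. The only difference is one of emphasis: the ``delicate ingredient'' you isolate (that $\gamma_z$ is not separated from its hand by arcs removed at earlier stages) is dispatched in the paper by the single observation that a bounded region enclosed by pieces of ray tails would contradict $f\in\B$, rather than by tracking a containment claim through the induction.
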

\begin{proof}
We prove all facts simultaneously using induction on $n$. For $n=0$, fundamental hands are fundamental domains, and so the statement follows trivially. Let us assume that the statement holds for some $n-1 \in \N$, and we shall see that it holds for $n$. Let $\tau$ be a fundamental hand of level $n$. Then, by definition, its image $f(\tau)$ is contained in a fundamental hand $\tilde{\tau}$ of level $n-1$, and 
\begin{equation}\label{eq_partialtau}
\partial \tilde{\tau}\subset \Big(f^{-n}(\partial W_{-1}) \cup \bigcup_{\substack{0<i<n\\	z\in S_I}} f^{-i}(\gamma_z)\Big).
\end{equation}
By Proposition \ref{prop_PF_fundamental}, $X_n$ does not intersect $f^{-n}(\partial W_{-1})$. %Note that $\partial \tilde{\tau}\setminus f^{-n}(\partial W_{-1})$ is either the empty set or it is formed by preimages of $X_i$ for some $0\leq i \leq n$, that is, preimages of the curves $\gamma_z$,%
Since by \eqref{eq_partialtau}, all other connected components that might form $\partial \tilde{\tau}$ are preimages of ray tails, and thus in $I(f)$, $X_n \cap (\partial \tilde{\tau}\setminus f^{-n}(\partial W_{-1}))$ must be simply connected: Otherwise, there would be a domain enclosed by pieces of ray tails that escapes uniformly to infinity, contradicting tha$\mathcal{B}$, \cite{eremenkoclassB}. Thus, by this and using the inductive hypothesis, $\tilde{\tau} \setminus X_n$ is an unbounded, simply connected domain. Since $f$ is an open map, the same holds for $\tau$.
 
In order to see that $f\vert_{\tau}$ is injective, note that all singular values contained in $W_{n-1}$ also belong to $X_n$, and hence $\tau \cap \Crit(f)=\emptyset$. This implies that the restriction $f\vert_{\tau}$ is a covering map, and all inverse branches of $f$ in the domain $f(\tau)$ can be continued. Moreover, as we have seen in the previous paragraph, $f(\tau)=\tilde{\tau}\setminus X_n$ is simply connected. Thus, all arcs in $f(\tau)$ with fixed endpoints are homotopic, and hence, by the Monodromy Theorem (see \cite[Theorem 2, p.295]{Ahlfors}), given any two homotopic curves in $f(\tau)$, for an inverse branch of $f$ defined in a neighbourhood of their starting endpoint, all its analytic continuations along the curves lead to the same values at the terminal endpoint, and so $f\vert_{\tau}$ is injective. By the inductive hypothesis, fundamental hands of level $n-1$ are pairwise disjoint, and since fundamental hands of level $n$ are the connected components of the preimages of subsets of those hands, they are also pairwise disjoint.
\end{proof}

%\begin{discussion}[Fixing external addresses for $f$] \label{dis_addresses_forhands}
%Let $f\in \B$ be a criniferous function such that $J(f)\cap \AV(f)=\emptyset$ and satisfying \ref{(PF)}, and suppose that fundamental hands have been defined for $f$. Then, we define external addresses for $f$ as sequences of fundamental hands of level $0$, since in particular these are fundamental domains. As usual, we denote by $\Addr(f)$ the set of admissible external addresses for $f$, see Definition \ref{def_extaddr}.
%\end{discussion}

Next, we define external addresses for $f$ using the alphabet of fundamental domains $\mathcal{A}(D,\delta)$, where $D$ and $\delta$ are provided by Proposition \ref{prop_PF_fundamental}. %In particular, fundamental domains are fundamental hands of level $0$.
As usual, we denote by $\Addr(f)$ the set of admissible external addresses for $f$.

In the next proposition, that will serve us as an auxiliary result to prove Proposition~\ref{prop_hands_fortails}, we show that fundamental hands of any level always intersect at least one fundamental domain, and that this intersection is unbounded. As a consequence, we obtain that fundamental hands contain Julia constituents.
\begin{prop}[Fundamental hands contain Julia constituents]\label{prop_hands_addr} Let $\tau$ be a fundamental hand of level $n$ for some $n\geq 0$. Then, there exists at least one fundamental domain $F_1$ such that $\tau \cap F_1$ is unbounded. Moreover, if $J_{\underline{\omega}} \subset \tau \setminus X_n$ for some $\underline{\omega}\in \Addr(f)$, then for each fundamental domain $F_0$, there exists a unique fundamental hand $\tilde{\tau}$ of level $n+1$ such that $f(\tilde{\tau})\subset \tau$ and $J_{F_0\underline{\omega}} \subset F_0 \cap\tilde{\tau}$. In particular, there is $\s \in \Addr(f)$ with $J_\s\subset \tau$. 
\end{prop}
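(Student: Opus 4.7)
The plan is to prove all three claims jointly by induction on the level $n$ of the hand $\tau$, leveraging Proposition \ref{prop_basic_hand} (the univalent mapping of level-$n$ hands onto level-$(n-1)$ hands) together with the fact that, on each fundamental domain $F_0$, the restriction $f|_{F_0}$ is a biholomorphism onto $\mathcal{W}$.

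For the base case $n = 0$, I note that $X_0 = \emptyset$, since $S(f) \subset D$ implies $W_{-1} \cap S_I = \emptyset$, so a level-$0$ hand is just a fundamental domain $F$. Part (1) follows with $F_1 = F$; for part (3) the constant address $\overline{F}$ is bounded and hence admissible by Theorem \ref{thm_props_Js}(c), yielding $J_{\overline{F}} \subset \unbdd{F} \subset F$; and for part (2), given $J_{\underline{\omega}} \subset F$ and any $F_0$, I pull back via the biholomorphism $f|_{F_0}$ to obtain $J_{F_0 \underline{\omega}} \subset F_0$ and take $\tilde{\tau}$ to be the component of $W_1 \cap F_0$ containing $J_{F_0 \underline{\omega}}$.

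For the inductive step I would first observe, by a separate induction on $n$ using Proposition \ref{prop_basic_hand}, that every level-$n$ hand lies inside a single tract $T$ of $f$: since $f(\tau)$ sits in a level-$(n-1)$ hand which by induction lies in a tract, $f^2(\tau) \subset \C \setminus \overline{D}$, and the connectedness of $\tau$ forces $\tau$ into one component of $f^{-1}(\C \setminus \overline{D})$. Then for part (1), the inductive hypothesis yields a fundamental domain $F'$ with $f(\tau) \cap F'$ unbounded; pulling back through the injection $f|_\tau$ produces an unbounded subset of $\tau$ in $f^{-1}(\unbdd{F'})$, and within $T$, since only finitely many fundamental domains meet any disk $\D_R$ (Proposition \ref{prop_exp_tracts}), this connected unbounded set must meet some fundamental domain $F_1 \subset T$ unboundedly. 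Part (2) is handled by defining $\tilde{\tau}$ as the component of $W_{n+1} \cap F_0$ containing $J_{F_0 \underline{\omega}} = \unbdd{F_0} \cap (f|_{F_0})^{-1}(J_{\underline{\omega}})$, and verifying via the biholomorphism $f|_{F_0}$ that $\tilde{\tau}$ is a bona fide level-$(n+1)$ hand and that $f(\tilde{\tau})$ is the component of $W_n \setminus X_{n+1}$ containing the connected set $J_{\underline{\omega}}$; since $J_{\underline{\omega}} \subset \tau$, this component lies in $\tau$, and uniqueness follows from the injectivity of $f|_{F_0}$. Part (3) follows by combining (1) and (2): the inductive hypothesis gives a Julia constituent $J_{\underline{\omega}}$ inside the level-$(n-1)$ hand $\tau'$ containing $f(\tau)$, and taking $F_0$ to be the fundamental domain from (1) (in the right tract) forces the resulting level-$n$ hand provided by (2) to coincide with $\tau$, whence $J_{F_0\underline{\omega}} \subset \tau$ and $F_0\underline{\omega} \in \Addr(f)$.

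The main obstacle is the verification inside part (2) that the connected set $J_{\underline{\omega}} \subset \tau \setminus X_n$ remains in a single component of $\tau \setminus X_{n+1}$, so that the component of $W_n \setminus X_{n+1}$ containing $J_{\underline{\omega}}$ is actually contained in $\tau$ and can be pulled back through $f|_{F_0}$ to produce $\tilde{\tau}$ without crossing the newly removed canonical tails. This relies on the connectedness of Julia constituents from Theorem \ref{thm_props_Js}, together with the standing assumption $J(f) \cap \AV(f) = \emptyset$, which prevents the tails in $X_{n+1}$ from accumulating on $J_{\underline{\omega}}$ in uncontrolled ways and thereby disconnecting it.
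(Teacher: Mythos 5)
Your treatment of the first two claims follows essentially the paper's route (pull back an unbounded connected piece of $f(\tau)\cap\unbdd{F'}$ through the injection $f\vert_\tau$, and use the biholomorphism $f\vert_{F_0}\colon F_0\to\mathcal{W}$ to produce the level-$(n+1)$ hand). Two cautions: you must pass to an unbounded \emph{connected component} of the pulled-back set before concluding it lies in a single fundamental domain (connectedness is what puts it in one component of $f^{-1}(\mathcal{W})$; the finitely-many-domains-per-disk argument alone does not suffice for an unbounded set); and Julia constituents are \emph{not} connected in general --- the paper says so explicitly right after Theorem \ref{thm_props_Js} --- so your closing paragraph cannot appeal to ``the connectedness of Julia constituents''. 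One works instead with the closed unbounded connected subsets they contain.

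The genuine gap is in the third claim. You run it by induction: take the constituent $J_{\underline{\omega}}$ supplied by the inductive hypothesis inside the level-$(n-1)$ hand containing $f(\tau)$ and pull it back via claim (2). But claim (2) requires $J_{\underline{\omega}}$ to avoid the removed set $X$, and the inductive hypothesis gives no control over this: the constituent it hands you may lie entirely inside one of the removed canonical tails $\gamma_z$ (each of which contains $J^\infty_{\underline{\alpha}}$ for some address $\underline{\alpha}$), in which case it is deleted at the next level and cannot be pulled back at all. This is precisely why the paper does not argue inductively here: it forms the finite set $\Addr(\mathcal{R})$ of addresses realized by the finitely many removed tails and their relevant iterates (finite because $\#S_I<\infty$), and uses the uncountability of bounded addresses to choose $\s=F_0F_1\ldots F_n\underline{\alpha}$ whose constituent avoids every $X_j$ simultaneously, then propagates it down through claim (2). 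A second, smaller issue: your assertion that choosing $F_0$ as in claim (1) ``forces the resulting level-$n$ hand provided by (2) to coincide with $\tau$'' is not justified --- several distinct hands of level $n$ inside the same fundamental domain can map into the same level-$(n-1)$ hand, and the uniqueness in (2) only singles out the hand containing $J_{F_0\underline{\omega}}$, which need not be $\tau$ unless the whole address has been chosen compatibly with the chain of hands $\tau_j\supset f^j(\tau)$, as in the paper's construction.
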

\begin{proof}
In order to prove the first claim, we proceed by induction on the level $n$ of the hand ~$\tau$. For $n = 0$, $\tau$ is a fundamental domain and so the first claim is trivial. Suppose that it is true for some $n-1 \in \N$. In order to see that it also holds for $n$, note that by definition of fundamental hands, $f(\tau)\subset \tau_2$, where $\tau_2$ is a fundamental hand of level $n-1$. Then, by the inductive hypothesis, there exists at least one fundamental domain $F_1\in \mathcal{A}(D, \gamma)$ such that $F_1\cap \tau_2$ is unbounded. Let $U$ be the unbounded connected component of $\unbdd{F}_1\cap (f(\tau) \setminus \overline{D})$. Then, since by Proposition \ref{prop_basic_hand}, $f\vert_{\tau}$ maps to $f(\tau)$ bijectively and since $U \subset W_{-1}$, there exists a unique fundamental domain $F_0$ containing the unbounded set $ f^{-1}(U) \cap \tau$, and so we have proved the first claim.  
	
For second claim, let $\underline{\omega}=F_1F_2\ldots\in \Addr(f)$ be as in the statement. Thus, $J_{\underline{\omega}} \subset \unbdd{F}_1$. Then, for any other fundamental domain $F_0$, by the same argument as before, $f^{-1}(\unbdd{F}_1)\cap F_0$ is an unbounded set that by definition contains $J_{F_0\underline{\omega}}$. %Moreover, since $\unbdd{F}_0\cap \Crit(f)=\emptyset$, by definition of fundamental domains, $f\vert_{f^{-1}(F_1)\cap F_0}$ is injective. This means that there exists a unique connected component of $f^{-1}((\tau \setminus X_n)\cap F_1)$ containing $J_{F_0\underline{\omega}}$.%
In particular, by definition of fundamental hands and since they are pairwise disjoint, there is a unique fundamental hand $\tilde{\tau} \subset f^{-1}(\tau \setminus X_n)$ of level $n+1$ that contains $J_{F_0\underline{\omega}}$, as we wanted to show.

Finally, we construct $\s \in \Addr(f)$ such that $J_\s\subset \tau\eqdef \tau_0$. Note that for each $0\leq j\leq n$, $f^j(\tau)\subset \tau_j$ for some fundamental hand $\tau_j$ of level $n-j$. In particular, by the first part of the proposition, for each fundamental hand $\tau_j$, there exists a fundamental domain $F_j$ such that $\tau_j\cap F_j$ is unbounded. Recall that the curves $\{\gamma_z\}_{z\in S_I}$ that form the sets $X_j$ are canonical tails. Then, by Theorem \ref{thm_signed} and Proposition \ref{prop_overlapping_Gamma} together with Observation \ref{obs_universalcanonical}, each $\gamma_z$ contains $J^\infty_{\ul{\alpha}}$ for exactly one element $\ul{\alpha}\in \Addr(f)$, and consequently, by \eqref{eq_inclJs}, the same holds for each of the canonical tails in
$$\mathcal{R} \defeq \{f^j(\gamma_z): z\in S_I \text{ and } 0 \leq j\leq n\}.$$
The set $\mathcal{R}$ has finite cardinality, and thus, so does the set 
$$\Addr(\mathcal{R})\defeq \{\s \in \Addr(f) :J^\infty_\s \subset \gamma \text{ for some } \gamma\in \mathcal{R} \}.$$
Since $\Addr(f)$ is an uncountable set, we can choose any bounded $\underline{\alpha}\in \Addr(f)$ such that $\s\defeq F_0F_1\ldots F_{n}\underline{\alpha} \notin \Addr(\mathcal{R})$. Then, since $\s$ is also a bounded address, by Theorem \ref{thm_props_Js}, $J_\s \neq \emptyset$, and so $\s \in \Addr(f)$. By construction, $J^\infty_\s\cap X_j=\emptyset$ for all $0\leq j\leq n$, and in particular, $J_{F_n\underline{\alpha}} \subset \tau_n \setminus X_n$. Then, by the second part of the proposition, $J_{F_{n-1}F_n\underline{\alpha}} \subset \tau_{n-1}\cap F_{n-1}$. Iterating this argument $n-2$ further times, we see that $J_\s \subset \tau$.
\end{proof}

As discussed at the beginning of the section, we are interested in finding neighbourhoods of canonical tails on which inverse branches are well-defined. These neighbourhoods will be provided by images of closures of fundamental hands. 

Let $\Addr(f)_\pm$ the space of signed addresses, defined using our choice of $\Addr(f)$. Moreover, let $$\mathcal{C}\defeq \left\{\gamma^n_{(\s, \ast)} : n\geq 0 \text{ and }(\s, \ast)\in \Addr(f)_\pm\right\}$$ be a set of canonical tails provided by Theorem \ref{thm_signed} for any valid initial configuration. In particular, one can always use the configuration $\{J^\infty_\s\}_{\s \in \Addr(f)}$, see Observation \ref{obs_config_crini}.

Then, in the next proposition we show that each canonical tail $\gamma^n_{(\s, \ast)} \in \mathcal{C}$ belongs to the closure of at least one and at most two fundamental hands of level $n$. In addition, we assign to each canonical tail a fundamental hand that will allow us to define the desired inverse branches in the following Theorem \ref{lem_inverse_hand}. Recall from Proposition \ref{prop_hands_addr} that given any fundamental hand, we can find a Julia constituent contained in it.

\begin{prop}[Fundamental hands for canonical tails] \label{prop_hands_fortails} %Let $f\in \B$ be a criniferous function such that $J(f)\cap \AV(f)=\emptyset$ and satisfying \ref{(PF)}, and let $\mathcal{C}$ be the set in \ref{dis_Signedaddresses_forhands}. 
For each $\gamma^n_{(\s, \ast)}\in \mathcal{C}$, exactly one of the following holds: 
\begin{enumerate}[label=(\Alph*)]
\item \label{item_unique_hand}There exists a unique fundamental hand $\tau$ of level $n$ such that $\gamma^n_{(\s, \ast)}\subset \overline{\tau}.$ We denote
$$\tau_n(\s, \ast)\defeq \tau \cup \gamma^n_{(\s, \ast)}.$$
\item \label{item_two_hands} The curve $\gamma^n_{(\s, \ast)}$ belongs to the boundary of exactly two fundamental hands $\tau$ and $\tilde{\tau}$ of level $n$. Let $\ul{\upsilon}, \ul{\omega} \in \Addr(f)$ such that $J^\infty_{\ul{\upsilon}}\subset \tau$ and $J^\infty_{\ul{\omega}} \subset \tilde{\tau}$. Then, we denote
\begin{equation}\label{eq_deftau_n}
\tau_n(\s, \ast)\defeq \renewcommand{\arraystretch}{1.5}\left\{\begin{array}{@{}l@{\quad}l@{}}
\tau \cup \gamma^n_{(\s, \ast)} & \text{if } [\ul{\upsilon},\ul{s},\ul{\omega}]_\ell \text{ and } \ast=- \ \text{ or } \ [\ul{\omega},\ul{s},\ul{\upsilon}]_\ell \text{ and } \ast=+; \\
\tilde{\tau} \cup \gamma^n_{(\s, \ast)} & \text{otherwise,}
\end{array}\right.\kern-\nulldelimiterspace
\end{equation}
where ``$[ \cdot ]_\ell$'' is the cyclic order in addresses defined in \eqref{eq_cyclicell}. 
\end{enumerate}	
\end{prop}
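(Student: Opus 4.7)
The plan is to proceed by induction on $n \geq 0$, using Proposition~\ref{prop_basic_hand} (each fundamental hand of level $n$ maps univalently under $f$ onto a fundamental hand of level $n-1$) together with Theorem~\ref{thm_signed}\,(b) (so that $f$ maps $\gamma^n_{(\s,\ast)}$ bijectively onto $\gamma^{n-1}_{(\sigma(\s),\ast)}$) to transport the claim through $f$.

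For the base case $n = 0$, fundamental hands of level $0$ coincide with the elements of $\mathcal{A}(D,\delta)$, which are pairwise disjoint open sets. Writing $F_0$ for the first letter of $\s$, Theorem~\ref{thm_signed}\,(a) gives $\gamma^0_{(\s,\ast)} = \gamma^0_\s \subset J^\infty_\s \subset \unbdd{F_0} \subset F_0$, so case (A) holds with $\tau = F_0$, and case (B) cannot occur since fundamental domains are pairwise disjoint.

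For the inductive step, I would set $\tilde\tau \defeq \tau_{n-1}(\sigma(\s),\ast)$ from the inductive hypothesis and then analyse the preimage structure locally along $\gamma^n_{(\s,\ast)}$. By Proposition~\ref{prop_basic_hand}, any fundamental hand $\tau$ of level $n$ with $\gamma^n_{(\s,\ast)} \subset \overline{\tau}$ maps univalently onto a level $n-1$ hand whose closure contains $\gamma^{n-1}_{(\sigma(\s),\ast)}$, so by induction the candidate image hands are restricted to the at most two prescribed at the previous step. At a point $w \in \gamma^n_{(\s,\ast)}$ where $f$ is a local homeomorphism, the local inverse transports the count ``number of hands with $w$ in their closure'' from $f(w)$ to $w$. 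At a critical point $c \in \gamma^n_{(\s,\ast)}$, the map $f$ behaves locally like $z \mapsto z^{\deg(f,c)}$, so the preimage of $\tilde\tau$ near $c$ splits into $\deg(f,c)$ sectors; by Definition~\ref{def_leftrightextensions}, the canonical tail occupies the common boundary of precisely two adjacent sectors, chosen by the left/right extension rule.

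The hard part will be establishing global consistency: that the pair of hands adjacent to $\gamma^n_{(\s,\ast)}$ does not ``rotate'' as one traverses the tail. I would combine two facts: (i) fundamental hands of level $n$ are pairwise disjoint open sets, so the assignment ``which hand(s) contain $w$ in their closure'' is locally constant along arcs of the tail away from critical points and $X_n$-slit preimages; and (ii) the extension rule in Definition~\ref{def_leftrightextensions} is uniform along the entire tail (always left bristles for $\ast = -$ and always right bristles for $\ast = +$), ensuring that at each critical point the same pair of adjacent hands is preserved through the passage. To then pin down the explicit assignment in \eqref{eq_deftau_n}, I would invoke Observation~\ref{obs_same_order}: the anticlockwise order of bristles around a critical point matches the cyclic order on $\Addr(f)_\pm$, so the lexicographic condition $[\ul{\upsilon},\ul{s},\ul{\omega}]_\ell$ combined with the sign $\ast$ singles out exactly which of the two adjacent hands is $\tau_n(\s,\ast)$.
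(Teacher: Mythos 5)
Your route is genuinely different from the paper's: you induct on $n$ and push the statement through $f$ using Proposition~\ref{prop_basic_hand}, whereas the paper argues directly at level $n$, first locating the level-$0$ sub-tail $\gamma^0_{(\s,\ast)}\subset\gamma^n_{(\s,\ast)}$ relative to the level-$n$ hands (it lies either in the interior of one hand, or on the common boundary of two — the latter exactly when $f^i(\gamma^0_{(\s,\ast)})\subset X_i$ for some $1\le i\le n$), and then propagates along the tail from high to low potential, checking at each (preimage of a) critical point that the relevant bristle stays in the closure of the same hand. Your base case is fine, and the general shape of the inductive step is plausible, but the part you yourself flag as ``the hard part'' is where the actual content of the proposition lives, and your two facts (i) and (ii) do not close it.

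The concrete gap is the determination of the dichotomy (A) versus (B). First, the pointwise count ``number of hands whose closure contains $w$'' is \emph{not} constant along $\gamma^n_{(\s,\ast)}$: in the paper's second case, the portion of the tail of potential above some point $x$ lies on $\partial\tau\cap\partial\tilde\tau$ while the portion below $x$ lies only in $\overline{\tau}$, and the conclusion is then case (A), not (B). So ``locally constant count away from critical points and slits, plus a uniform extension rule'' cannot by itself yield the statement; what is needed is (a) the criterion that (B) holds precisely when the \emph{entire} tail, down to its finite endpoint, remains on the common boundary, and (b) the argument that once some bristle peels off into (the closure of) a single hand at the lowest-potential common-boundary point, all subsequent bristles remain in that same hand's closure — this is the paper's iterative minimality-of-$x$ argument, and it is absent from your sketch. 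Second, your local analysis at a critical point $c$ conflates the sectors of $f^{-1}(\tau'\setminus X_n)$ near $c$ with fundamental hands: two adjacent sectors may belong to the same level-$n$ hand (the separating radial branch need not lie on any hand boundary, since only the curves in $X_n$ are slit out), so ``the tail bounds two adjacent sectors'' does not decide whether it is interior to one hand or on the boundary of two. Relatedly, the count of radial branches at $c$ is $2\deg(f,c)$, not $\deg(f,c)$, when $f(c)$ is an interior point of the image curve. Appealing to Observation~\ref{obs_same_order} to fix the sign convention in \eqref{eq_deftau_n} is the right idea for the final labelling, but only after the one-vs-two-hands alternative has actually been established.
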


\begin{remark}
The definition of $\tau_n(\s, \ast)$ in case \ref{item_two_hands} is independent of the choice of addresses $\ul{\upsilon}, \ul{\omega} \in \Addr(f)$ such that $J^\infty_{\ul{\upsilon}}\subset \tau$ and $J^\infty_{\ul{\omega}} \subset \tilde{\tau}$. To see this, note that by definition of fundamental hands, the connected component $T$ of $\partial \tau$ that contains $J^\infty_{\s}$ separates the plane, and in particular, $\tau$ and $\tilde{\tau}$ lie in different components of $\C \setminus T$. Moreover, $T$ must contain a (preimage of a) critical point, and so $J^\infty_{\ul{\alpha}} \subset T$ for some other $\ul{\alpha} \in \Addr(f)$. Then, we can define a linear order ``$<$'' in $\Addr(f)$ by cutting $\ul{\alpha}$. In particular, either $\ul{\upsilon}<\s<\ul{\omega} $ or $\ul{\upsilon}>\s>\ul{\omega} $ for all pairs of addresses $\ul{\upsilon}$ and $\ul{\omega}$ so that $J^\infty_{\ul{\upsilon}} \subset \tau$ and $J^\infty_{\ul{\omega}} \subset \tilde{\tau}$, and using the equivalence \eqref{eq_cyclicorder}, the claim follows.
%$ \cup \partial \tilde{\tau}$ separates the plane into finitely many regions, and by \eqref{eq_cyclicorder}, $[\ul{\upsilon},\ul{s},\ul{\omega}]_\ell$ for some $\ul{\upsilon}, \ul{\omega} \in \Addr(f)$ such that $J^\infty_{\ul{\upsilon}}\subset \tau$ and $J^\infty_{\ul{\omega}} \subset \tilde{\tau}$, if and only if $[\ul{a},\ul{s},\ul{b}]_\ell$ for any pair $\ul{a}, \ul{b} \in \Addr(f)$ such that $J^\infty_{\ul{a}}\subset \tau$ and $J^\infty_{\ul{b}} \subset \tilde{\tau}$. 
\end{remark}
\begin{proof}[Proof of Proposition \ref{prop_hands_fortails}] 
Firstly, recall that by Observation \ref{rem_mother}, each canonical tail belongs to a curve $\gamma^m_{(\ul{\alpha}, \star)}$ for some $m\geq 0$ and $(\ul{\alpha}, \star)\in \Addr(f)_\pm$. In particular, this is the case for the canonical tails $\{\gamma_z\}_{z\in S_I}$ we chose in the definition of fundamental hands. We remark that the curves in $\{\gamma_z\}_{z\in S_I}$ might not be pairwise disjoint, but since $f\in \B$, $I(f)$ has empty interior \cite{eremenkoclassB}, and thus, each connected component of $\C \setminus \bigcup_{z\in S_I} \gamma_z$ is simply connected. Then, the boundaries of fundamental hands are either connected components of preimages of curves in $\{\gamma_z\}_{z\in S_I}$, or preimages of the boundary of the set $W_{-1}$ from Proposition \ref{prop_PF_fundamental}. In particular, by Propositions \ref{prop_overlapping_Gamma} and \ref{prop_PF_fundamental}, each curve $\gamma^n_{(\s, \ast)}$ might only intersect boundaries of fundamental hands at (preimages of) critical points, or might share with them a segment between any two of those preimages.

Let us fix a curve $\gamma^n_{(\s, \ast)}\in \mathcal{C}$. Since, by Theorem \ref{thm_signed} and Proposition \ref{prop_overlapping_Gamma} together with Observation \ref{obs_universalcanonical}, each curve in $\{\gamma_z\}_{z \in S_I}$ contains a curve $\gamma^0_{\ul{\alpha}}$ for exactly one address $\ul{\alpha}\in \Addr(f)$, and the same holds for the unbounded components of $f^{-n}(\gamma_z)\setminus \Crit(f)$ for all $n\in \N$, the curve $\gamma_{(\s, \ast)}^0$ is either totally contained in a hand of level $n$, or belongs to the boundary of two such hands whenever $f^i(\gamma_{(\s, \ast)}^0) \subset X_i$ for some $1\leq i \leq n$. We subdivide the proof into these two cases:
\begin{itemize}[noitemsep,wide=0pt, leftmargin=\dimexpr\labelwidth + 2\labelsep\relax]
\item Suppose that $\gamma_{(\s, \ast)}^0 \subset \tau$, where $\tau$ is a hand of level $n$. If $\gamma_{(\s, \ast)}^n \subset \tau$, then case \ref{item_unique_hand} holds. Otherwise, let $x\in \gamma_{(\s, \ast)}^n$ be the point of greatest potential that also belongs to $\partial\tau $. By Propositions \ref{prop_PF_fundamental} and \ref{prop_overlapping_Gamma}, $x$ must be a (preimage of a) critical point, and there must be at least two canonical tails in $\partial \tau$ that also contain $x$. Recall that bristles are defined for canonical tails as either the successor or predecessor (bounded) segment in the circular order of segments around $x$; see Figure \ref{fig:branches}. Hence, the bristles of the unbounded component of $\gamma^n_{(\s, \ast)}\setminus \{x\}$ must lie between this curve and two unbounded components of $\partial\tau\setminus \{x\}$ that contain $x$ as an endpoint. Thus, these bristles belong to either $\partial \tau$ or $\tau$. If the bounded component of $\gamma^n_{(\s, \ast)}\setminus \{x\}$ does not contain any other (preimage of a) critical point, then we are done. Otherwise, $\gamma^n_{(\s, \ast)}$ might intersect another boundary component of $\partial \tau$ in a point $y$, and by the same argument, the bristles of the corresponding unbounded component of $\gamma^n_{(\s, \ast)}\setminus \{y\}$ must again lie in $\overline{\tau}$. Thus, case \ref{item_unique_hand} holds for $\gamma^n_{(\s, \ast)}$.

\begin{figure}[h]
\centering
%	\resizebox{0.75\textwidth}{!}{
\begingroup%
  \makeatletter%
  \providecommand\color[2][]{%
    \errmessage{(Inkscape) Color is used for the text in Inkscape, but the package 'color.sty' is not loaded}%
    \renewcommand\color[2][]{}%
  }%
  \providecommand\transparent[1]{%
    \errmessage{(Inkscape) Transparency is used (non-zero) for the text in Inkscape, but the package 'transparent.sty' is not loaded}%
    \renewcommand\transparent[1]{}%
  }%
  \providecommand\rotatebox[2]{#2}%
  \newcommand*\fsize{\dimexpr\f@size pt\relax}%
  \newcommand*\lineheight[1]{\fontsize{\fsize}{#1\fsize}\selectfont}%
  \ifx\svgwidth\undefined%
    \setlength{\unitlength}{314.64566929bp}%
    \ifx\svgscale\undefined%
      \relax%
    \else%
      \setlength{\unitlength}{\unitlength * \real{\svgscale}}%
    \fi%
  \else%
    \setlength{\unitlength}{\svgwidth}%
  \fi%
  \global\let\svgwidth\undefined%
  \global\let\svgscale\undefined%
  \makeatother%
  \begin{picture}(1,0.62162162)%
    \lineheight{1}%
    \setlength\tabcolsep{0pt}%
    \put(0,0){\includegraphics[width=\unitlength,page=1]{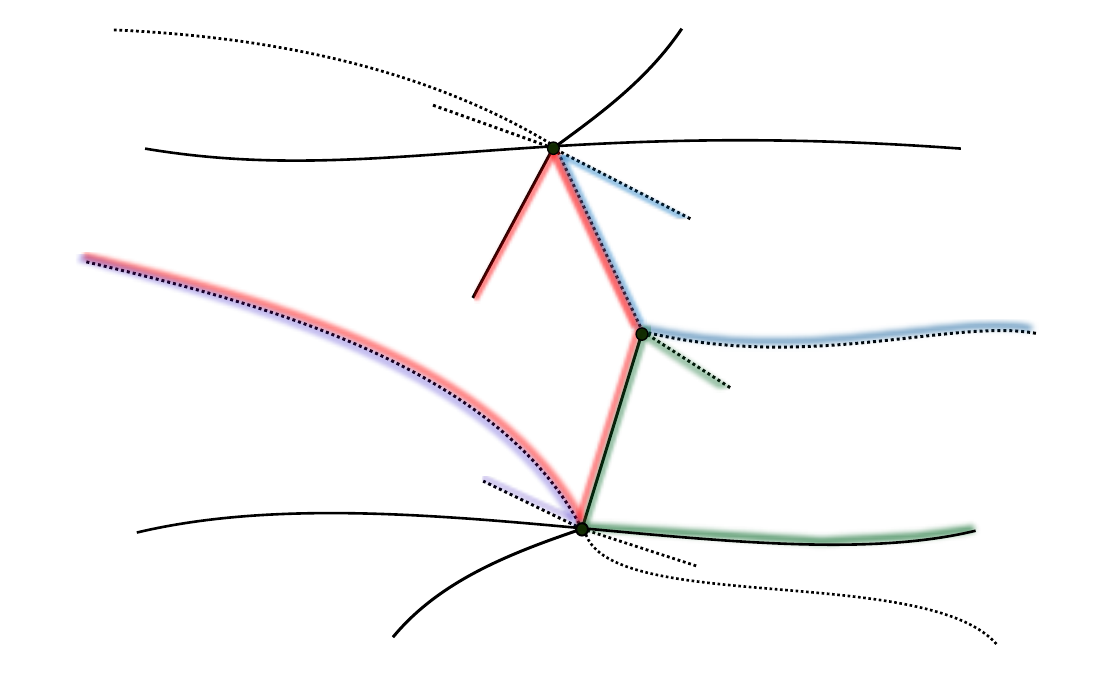}}%
    \put(0.13404596,0.39878471){\color[rgb]{0.83137255,0,0}\makebox(0,0)[lt]{\lineheight{1.25}\smash{\begin{tabular}[t]{l}\fontsize{9pt}{1em}$\gamma^n_{(\underline{s}, -)}$\end{tabular}}}}%
    \put(0.7867219,0.44512477){\color[rgb]{0,0,0}\makebox(0,0)[lt]{\lineheight{1.25}\smash{\begin{tabular}[t]{l}$\tau$\end{tabular}}}}%
    \put(0,0){\includegraphics[width=\unitlength,page=2]{branches1.pdf}}%
  \end{picture}%
\endgroup%}
\caption{Example of a curve $\gamma^n_{(\s, -)}$ contained in a fundamental hand $\tau$ of level $n$, shown in red. The boundary of $\tau$ is represented with black continuous lines and is formed by ray tails. Dotted lines show other pieces of ray tails. Canonical tails that overlap with $\gamma^n_{(\s, -)}$ are displayed in different colours.}
\label{fig:branches}
\end{figure}

\item Suppose that $\gamma_{(\s, \ast)}^0 \subset \partial\tau \cap \partial \tilde{\tau}$, where $\tau$ and $\tilde{\tau}$ are two hands of level $n$. Let $x\in \gamma_{(\s, \ast)}^n$ be the point of smallest potential that also belongs to $\partial\tau \cap \partial \tilde{\tau}$. If $x$ is the endpoint of $\gamma_{(\s, \ast)}^n$, case \ref{item_two_hands} holds and we are done. Otherwise, by the same argument as before, $x$ must be a (preimage of a) critical point, and so, the continuation of $\gamma^n_{(\s, \ast)}$ towards points of smaller potential than the one of $x$, is a nested sequence of left or right bristles. By minimality of $x$, the bristle that contains $x$ can no longer be in the boundary of both $\tau$ and $\tilde{\tau}$, so it is either in the interior or in the boundary of only one of them. Then, we can argue as in the previous case and we see that \ref{item_unique_hand} holds.\qedhere
\end{itemize}
\end{proof}

Finally, we use the sets $\tau_n(\s, \ast)$ from the previous proposition to define, for any given signed address $(\s, \ast)$ and $n>0$, an inverse branch of $f$ in a (not necessarily open) neighbourhood $U$ of $\gamma^{n-1}_{(\sigma(\s), \ast)}$ such that $\tilde{f}(U)\supset \gamma^{n}_{(\s, \ast)}$.
\begin{thm}[Inverse branches for canonical tails]\label{lem_inverse_hand} Following Proposition~\ref{prop_hands_fortails}, for each $n\geq 0$ and $(\s, \ast)\in \Addr(f)_\pm$:
\begin{enumerate}[label=(\arabic*)]
\item \label{item1} There exists an open interval of signed addresses $\mathcal{I}^n(\s,\ast) \ni (\s,\ast)$ such that $$\tau_n(\ul{\alpha}, \star) \subseteq \tau_n(\s, \ast) \quad \text{ for all } \quad (\ul{\alpha}, \star) \in \mathcal{I}^n(\s,\ast).$$
\item \label{item2} If $n\geq 1$, the restriction $f\vert_{\tau_n(\s, \ast)}$ is injective and maps to $\tau_{n-1}(\sigma(\s), \ast)$.	
\end{enumerate}
Hence, for all $n\geq 1$, we can define the inverse branch
\begin{equation}\label{eq_inversebranch}
f^{-1,[n]}_{(\s,\ast)}\defeq \left(f\vert_{ \tau_n(\s, \ast)}\right)^{-1} \colon f(\tau_n(\s, \ast)) \longrightarrow \tau_n(\s, \ast).
\end{equation}
\end{thm}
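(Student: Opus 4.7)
The plan is to prove (1) and (2) by simultaneous induction on $n$. For the base case $n=0$, the set $\tau_0(\s,\ast)$ is by definition the union $F \cup \gamma^0_\s$ for a single fundamental domain $F$, so part (2) is vacuous and part (1) follows from the definition of the cyclic-order topology: the signed addresses whose first symbol is $F$ and whose Julia constituent lies in the unbounded component of $F$ form an open interval around $(\s,\ast)$, and for each such $(\ul{\alpha},\star)$ we get $\tau_0(\ul{\alpha},\star)\subseteq\tau_0(\s,\ast)$ via Proposition~\ref{prop_hands_addr}.

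For the inductive step, I first verify (2). By Proposition~\ref{prop_hands_fortails}, $\tau_n(\s,\ast)=\tau\cup\gamma^n_{(\s,\ast)}$ for a fundamental hand $\tau$ of level $n$ with $\gamma^n_{(\s,\ast)}\subseteq\overline{\tau}$. Proposition~\ref{prop_basic_hand} says $f\vert_{\tau}$ is univalent onto a fundamental hand $\tilde{\tau}$ of level $n-1$, and Theorem~\ref{thm_signed} says $f$ sends $\gamma^n_{(\s,\ast)}$ bijectively onto $\gamma^{n-1}_{(\sigma(\s),\ast)}$. Since $\gamma^n_{(\s,\ast)}\subseteq\overline{\tau}$ and the two pieces agree, by continuity, on their overlap (which consists only of (preimages of) critical points where the canonical tail crosses the boundary of $\tau$), the composite map $f\vert_{\tau_n(\s,\ast)}$ is injective onto $\tilde{\tau}\cup\gamma^{n-1}_{(\sigma(\s),\ast)}$. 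The main bookkeeping is to identify this image with $\tau_{n-1}(\sigma(\s),\ast)$, i.e.\ to show $\tilde{\tau}$ is exactly the hand assigned by Proposition~\ref{prop_hands_fortails} to $(\sigma(\s),\ast)$. This is precisely why the left/right side choice in \eqref{eq_deftau_n} is made with the sign convention it has: at any preimage of a critical point $c\in\partial\tau$ lying on $\gamma^n_{(\s,\ast)}$, the bristle selected by Definition~\ref{def_leftrightextensions} for the sign $\ast$ lies on the side of $\gamma^{n-1}_{(\sigma(\s),\ast)}$ at $f(c)$ prescribed by the same sign, because $f$ preserves cyclic orientation around $c$; one checks this separately in cases (A) and (B) of Proposition~\ref{prop_hands_fortails}.

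For (1), I use the inductive hypothesis on $\mathcal{I}^{n-1}(\sigma(\s),\ast)$ and pull back by the inverse branch just constructed. Because $\tau$ is an open simply connected set and its boundary consists of finitely many canonical tails together with a piece of a preimage of $\partial W_{-1}$, we can choose two ``extreme'' signed addresses $(\ul{\upsilon}_-,-)$ and $(\ul{\upsilon}_+,+)$ corresponding to the two bounding canonical tails of $\tau$ that are closest to $\gamma^n_{(\s,\ast)}$ in the cyclic order. By Proposition~\ref{prop_hands_addr}, every signed address $(\ul{\alpha},\star)$ strictly between them in $\Addr(f)_\pm$ has $J^\infty_{\ul{\alpha}}\subset\tau$, and inductively its level-$n$ fundamental hand is contained in $\tau$. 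Intersecting this interval with the pullback through $f$ of $\mathcal{I}^{n-1}(\sigma(\s),\ast)$ (which is an open interval around $(\sigma(\s),\ast)$) yields an open interval $\mathcal{I}^n(\s,\ast)\ni(\s,\ast)$ for which both the fundamental-hand part and the canonical-tail part of $\tau_n(\ul{\alpha},\star)$ lie inside $\tau_n(\s,\ast)$. The inverse branch formula \eqref{eq_inversebranch} is then immediate from (2).

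The main obstacle is the compatibility step in (2), namely showing that the hand $\tilde{\tau}=f(\tau)$ is the very one assigned by Proposition~\ref{prop_hands_fortails} to $\tau_{n-1}(\sigma(\s),\ast)$. This is a careful case analysis around (preimages of) critical points on $\overline{\tau}$, where one must use the equivalence of the cyclic order of radial segments with the order on signed addresses recorded in Observation~\ref{obs_same_order}, together with the consistent left/right labelling in Definition~\ref{def_leftrightextensions}. Once this compatibility is established, both the injectivity of $f\vert_{\tau_n(\s,\ast)}$ and the identification of the image with $\tau_{n-1}(\sigma(\s),\ast)$ follow, and the open-interval statement in (1) reduces to the geometric observation that a fundamental hand is an open set whose Julia constituents realise an open interval of external addresses.
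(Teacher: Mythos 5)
Your overall route is the same as the paper's: induction on $n$, with (2) established by combining Proposition~\ref{prop_basic_hand} (univalence of $f$ on the hand) with Theorem~\ref{thm_signed} (bijectivity on the tail), the compatibility of the left/right sign convention with the cyclic order under $f$ doing the real work, and (1) obtained by pulling back $\mathcal{I}^{n-1}(\sigma(\s),\ast)$ through the branch just constructed. Two points, however, are not actually justified as written.

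First, in (2) you deduce injectivity of $f\vert_{\tau_n(\s,\ast)}$ from the fact that $f$ is injective on $\tau$ and on $\gamma^n_{(\s,\ast)}$ separately and that ``the two pieces agree on their overlap''. That implication is false in general: two injections agreeing on their common domain need not have disjoint images elsewhere, so their union need not be injective. What is needed (and what the paper supplies) is that if $y\in\gamma^n_{(\s,\ast)}\setminus\operatorname{int}(\tau_n(\s,\ast))$ then $y\in\partial\tau$, hence $f(y)$ lies in $X_n\cup\partial f(\tau)$, which is disjoint from the open set $f(\tau)$ containing the image of any interior point $x$; so $f(x)=f(y)$ is impossible. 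This is a one-line repair, but it is the actual reason, not continuity on the overlap. Second, in (1) your description of ``intersect the interval between the two extreme bounding addresses with the pullback of $\mathcal{I}^{n-1}(\sigma(\s),\ast)$'' silently assumes $\gamma^0_{(\sigma(\s),\ast)}$ sits in $\operatorname{int}(\tau_{n-1}(\sigma(\s),\ast))\setminus X_n$. When instead $\gamma^0_{(\sigma(\s),\ast)}\subset\partial\tau_{n-1}(\sigma(\s),\ast)$ or $\gamma^0_{(\sigma(\s),\ast)}\subset X_n$, the interval $\mathcal{I}^{n-1}(\sigma(\s),\ast)$ is necessarily one-sided, of the form $((\sigma(\s),-),(\ul{a},\star))$ when $\ast=+$ (resp.\ $((\ul{a},\star),(\sigma(\s),+))$ when $\ast=-$), and one must check separately that after prepending $F_0$ one may still enlarge the resulting interval to an open one containing $(\s,\ast)$; this is exactly where the doubling of addresses is used, and the paper devotes a separate case analysis to it. Neither issue changes the architecture of your argument, but both need to be filled in for the proof to stand.
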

\begin{proof}
We start by showing \ref{item2}. Recall that by Theorem \ref{thm_signed}, $f\vert_{\gamma^n_{(\s, \ast)}}$ maps injectively to $\gamma^{n-1}_{(\sigma(\s), \ast)}$, and by Proposition \ref{prop_basic_hand}, $f \vert_{\text{int}(\tau_n(\s, \ast))}$ maps injectively into a fundamental hand of level $n-1$, where $\text{int}(\tau_n(\s, \ast))$ denotes the interior of $\tau_n(\s, \ast)$. Suppose for the sake of contradiction that there exist $x\in \text{int}(\tau_n(\s, \ast)) \setminus \gamma^n_{(\s, \ast)}$ and $y\in \gamma^n_{(\s, \ast)}\setminus \text{int}(\tau_n(\s, \ast))$ so that $f(x)=f(y)$. %Then, $y$ must belong to $\partial \tau_n(\s, \ast)$, 
In particular, $f(y)\in (X_n\cup \partial f(\tau_n(\s, \ast)))$, but this would contradict $x\in \text{int}(\tau_n(\s, \ast))$. Thus, $f\vert_{\tau_n(\s, \ast)}$ is injective. 

If $\gamma^0_{(\s, \ast)} \subset \text{int}(\tau_n(\s, \ast))$, then case \ref{item_unique_hand} in Proposition \ref{prop_hands_fortails} must occur for both $\gamma^n_{(\s, \ast)}$ and $\gamma^{n-1}_{(\sigma(\s), \ast)}$. Then, since $f$ is continuous, $f(\tau_n(\s, \ast)) \subset \tau_{n-1}(\sigma(\s), \ast)$ and \ref{item2} follows. Otherwise, $\gamma^0_{(\s, \ast)} \subset \partial \tau_n(\s, \ast) \cap \partial \tilde{\tau}$ for $\tilde{\tau}$ another hand of level $n$. If $\gamma^0_{(\s, \ast)}\subset f^{-1}(X_n)$, then $f(\gamma^0_{(\s, \ast)})$ is totally contained in a fundamental hand of level $n-1$, namely $\tau_{n-1}(\sigma(\s), \ast)$. Then, by continuity of $f$, both $\tau_n(\s, \ast)$ and $\tilde{\tau}$ are mapped under $f$ to $\tau_{n-1}(\sigma(\s), \ast)$, and \ref{item2} follows. Hence, we are left to study the case when $\gamma^0_{(\s, \ast)}\nsubseteq f^{-1}(X_n)$, which implies that $\gamma^{0}_{(\sigma(\s), \ast)}$ must also belong to $\partial \tau_{n-1}(\sigma(\s), \ast)$. By Proposition \ref{prop_hands_addr}, we can choose a pair of addresses $\ul{a}, \ul{b} \in \Addr(f)$ such that $\gamma^0_{\ul{a}}\subset \tau_n(\s, \ast)$ and $\gamma^0_{\ul{b}} \subset \tilde{\tau}$. In particular, by Proposition \ref{prop_hands_fortails}, it must occur that $\tau_n(\ul{a},\ast)=\tau_n(\s, \ast)$ and $\tau_n(\ul{b},\ast)=\tilde{\tau}$. We may assume without loss of generality that $[\ul{a}, \s, \ul{b}]_\ell$ holds, since the case when $[\ul{b}, \s, \ul{a}]_\ell$ does is analogous. Note that by \eqref{eq_cyclicorder}, this is equivalent to $[\gamma^0_{\ul{a}},\gamma^0_{\ul{s}},\gamma^0_{\ul{b}}]_{\infty}$, and since, by continuity of $f$, the circular order at infinity of these curves is preserved under iteration of $f$, by \eqref{eq_inclJs}, it holds that $[f(\gamma^0_{\ul{a}}),f(\gamma^0_{\ul{s}}),f(\gamma^0_{\ul{b}})]_{\infty}.$
Moreover, by definition of valid initial configurations, $f(\gamma^0_{\ul{\alpha}})\subset \gamma^0_{\sigma(\ul{\alpha})}$ for all $\ul{\alpha} \in \Addr(f)$. Hence, it holds $[\gamma^0_{\sigma(\ul{a})},\gamma^0_{\sigma(\ul{s})},\gamma^0_{\sigma(\ul{b})}]_{\infty}, $ and thus 
\begin{equation}\label{eq_preserveorder}
[\sigma(\ul{a}), \sigma(\s), \sigma(\ul{b})]_\ell.
\end{equation}
Recall that the sign $\ast \in \{-,+\}$ is preserved in the curve $\gamma^{n}_{(\s, \ast)}$ under the action of $f$, as $f(\gamma^{n}_{(\s, \ast)}) =\gamma^{n-1}_{(\sigma(\s), \ast)}$. Then, \eqref{eq_preserveorder} together with continuity of $f$ implies that if case \ref{item_two_hands} in Proposition \ref{prop_hands_fortails} holds for either $\gamma^{n}_{(\s, \ast)}$, $\gamma^{n-1}_{(\sigma(\s), \ast)}$, or both curves, then $\tau_n(\s, \ast)$ and $\tau_{n-1}(\sigma(\s), \ast)$ are chosen so that \ref{item2} holds. 

We prove \ref{item1} by induction on $n$. If $n=0$, then for any $(\s, \ast)\in \Addr(f)_\pm$, $\tau_0(\s, \ast)= F_0$ for some fundamental domain $F_0$. By Theorem \ref{thm_props_Js}, we can choose a pair of bounded addresses $\ul{a},\ul{b}$ whose first entry is $F_0$ and so that $[\ul{a}, \s, \ul{b}]_\ell$, and define $\mathcal{I}^0(\s, \ast)\defeq ((\ul{a},-), (\ul{b},+))$. Then, $\gamma_{(\ul{\alpha}, \star)}^0 \subset F_0$ for all addresses $(\ul{\alpha}, \star) \in \mathcal{I}^0(\s, \ast)$, and so $\tau_0(\ul{\alpha}, \star)=F_0$ and \ref{item1} follows.

Let us assume that the statement holds for all addresses in $\Addr(f)_\pm$ and some $n-1 \in \N$. Suppose that $\s=F_0F_1\ldots$ and note that by the inductive hypothesis, the interval $\mathcal{I}^{n-1}(\sigma(\s), \ast) \ni (\sigma(\s), \ast)$ is defined. If
\begin{equation}\label{eq_casogamma}
\gamma_{(\sigma(\s), \ast)}^{0} \subset \text{int}(\tau_{n-1}(\sigma(\s), \ast))\setminus X_n,
\end{equation} 
we can choose $\mathcal{I}'\subseteq \mathcal{I}^{n-1}(\sigma(\s), \ast)$ such that $\gamma_{(\ul{\alpha}, \star)}^{0} \subset \text{int}(\tau_{n-1}(\sigma(\s), \ast))\setminus X_n$ for all $(\ul{\alpha}, \star)\in \mathcal{I}'$ and $(\sigma(\s), \ast) \in \mathcal{I}'$. Then, for each $(\ul{\alpha}, \star)\in \mathcal{I}'$, by Proposition \ref{prop_hands_fortails}, there exists a unique fundamental hand $\tilde{\tau}$ of level $n$ such that $f(\tilde{\tau})\subset \text{int}(\tau_{n-1}(\sigma(\s), \ast))$ and $\gamma^0_{F_0\ul{\alpha}} \subset \tilde{\tau}$, and by continuity of $f$, the hand $\tilde{\tau}$ must equal $\tau_n(F_0\ul{\alpha}, \star)=\tau_n(\s, \ast)$. Hence, all the addresses in the set
$$S\defeq \{ (F_0\ul{\alpha}, \star) \in \Addr(f)_\pm : (\ul{\alpha}, \star) \in \mathcal{I}'\}$$ 
satisfy the property required. Moreover, $(\s, \ast)\in S$ and $S$ is an interval of addresses, since by continuity, $f$ preserves the order at infinity of extensions of level $0$. More specifically, if $\mathcal{I}'= ((\ul{a},\ast), (\ul{b},\star))$ for some $\underline{a}, \underline{b} \in \Addr(f)$, then $S=((F_0\ul{a},\ast), (F_0\ul{b},\star))\eqdef \mathcal{I}_n(\s, \ast).$

Otherwise, if \eqref{eq_casogamma} does not hold for $\gamma_{(\sigma(\s), \ast)}^{0}$, then either $\gamma_{(\sigma(\s), \ast)}^{0}\subset X_n$, or $\gamma_{(\sigma(\s), \ast)}^{0} \subset \partial \tau_{n-1}(\sigma(\s), \ast)$. In the second case, the interval of addresses $\mathcal{I}^{n-1}(\sigma(\s), \ast)$ must be of the form 

\begin{equation*}
\mathcal{I}^{n-1}(\sigma(\s), \ast)=
\renewcommand{\arraystretch}{1.5}\left \{\begin{array}{@{}l@{\quad}l@{}}
((\s, -), (\ul{a}, \star)) & \text{if } \ast=+ \\
((\ul{a}, \star), (\s, +)) & \text{if } \ast=-
\end{array}\right.\kern-\nulldelimiterspace
\end{equation*}
for some $(\ul{a}, \star)\in \Addr(f)_\pm$. To see this, we might assume without loss of generality that $\ast=+$. Then, since $\gamma_{(\sigma(\s), -)}^{0} \in \partial \tau_{n-1}(\sigma(\s), \ast)$, any open interval of addresses containing $(\sigma(\s), -)$ would also have to contain signed addresses whose corresponding Julia constituents lie in another fundamental hand. Thus, $\mathcal{I}^{n-1}(\sigma(\s), \ast)$ must be an open interval containing $(\s, +)$ but not $(\s, -)$, and so must be of the form claimed. Then, arguing as before, we can find a subinterval of addresses $\mathcal{I}'\subseteq \mathcal{I}^{n-1}(\sigma(\s), \ast)$ such that the curve $\gamma_{(\ul{\alpha}, \star)}^{0} \subset \text{int}(\tau_{n-1}(\sigma(\s), \ast))\setminus X_n$ for all $(\ul{\alpha}, \star)\in \mathcal{I}'\setminus \{(\s, +)\}$. By the analysis of the previous case, if we let $\mathcal{I}'\defeq ((\s,+), (\ul{b},+))$, then for all addresses $(\ul{\alpha}, \star) \in ((F_0\s, +), (F_0\ul{b}, +))$, it holds that $\gamma^n_{(\ul{\alpha}, \ast)} \subset \tau_n(\ul{\alpha}, \star) = \tau_n(\ul{s}, \ast)$. Then, for the statement to hold, we include the address $(\s, +)$ on the interval by defining $\mathcal{I}^n_{(\s, \ast)}\defeq ((\s, -), (F_0\ul{b}, +))$.

We are left to consider the case when $\gamma_{(\sigma(\s), \ast)}^{0}\subset X_n$. Note that by definition of fundamental hands, this implies that $\gamma_{(\s, \ast)}^{0}\subset \partial \tau_n(\s, \ast)$. For the purposes of defining the desired interval, we can regard $\gamma^{0}_{(\sigma(\s), \ast)}$ as if it belonged to the boundary of its fundamental hand in order to apply the same reasoning as before. That is, we choose a subinterval
$\mathcal{I}'\subset \mathcal{I}^{n-1}(\sigma(\s), \ast)$ of the form
\begin{equation*}
\mathcal{I}'\defeq
\renewcommand{\arraystretch}{1.5}\left \{\begin{array}{@{}l@{\quad}l@{}}
((\s, -), (\ul{a}, \star)) & \text{if } \ast=+ \\
((\ul{a}, \star), (\s, +)) & \text{if } \ast=-
\end{array}\right.\kern-\nulldelimiterspace
\end{equation*}
for some $\underline{a}\in \Addr(f)$, and proceed as in the previous case. 
\end{proof}

We can now make explicit and justify the idea from the beginning of the section of finding for each $n\geq 0$, inverse branches of $f^n$ defined on neighbourhoods of canonical tails satisfying certain properties:
\begin{observation}[Chains of inverse branches]\label{obs_chain_inverse} Following Theorem \ref{lem_inverse_hand}, for each $n\geq 0$ and $(\s, \ast)\in \Addr(f)_\pm$, we denote
$$f^{-n}_{(\s,\ast)}\defeq \left(f^n\vert_{ \tau_n(\s, \ast)}\right)^{-1} \colon f^n(\tau_n(\s, \ast)) \rightarrow \tau_n(\s, \ast).$$
Then, by Theorem \ref{lem_inverse_hand}\ref{item2}, the following chain of embeddings holds:
$$ \tau_n(\s, \ast)\xhookrightarrow{ \; \;\;\; f \;\;\; \; } \tau_{n-1}(\sigma(\s), \ast)\xhookrightarrow{\; \;\; \; f\;\; \; \; } \tau_{n-2}(\sigma(\s), \ast) \xhookrightarrow{ \;\;\; \; f\;\;\; \; } \cdots \xhookrightarrow{\; \;\; \; f \; \; \;\; } \tau_{0}(\sigma^{n}(\s), \ast). $$
This means that we can express the action of $f^{-n}_{(\s, \ast)}$ in $f^n(\tau_n(\s, \ast))$ as a composition of functions defined in \eqref{eq_inversebranch}. That is, 
$$ \tau_n(\s, \ast)\xleftarrow{ f^{-1,[n]}_{(\s, \ast)} } f(\tau_n(\s, \ast))\xleftarrow{ f^{-1, [n-1]}_{(\sigma(\s), \ast)} } f^2(\tau_n(\s, \ast)) \xleftarrow{ f^{-1, [n-2]}_{(\sigma^2(\s), \ast)} } \cdots \xleftarrow{ f^{-1, [1]}_{(\sigma^{n-1}(\s), \ast)} } f^n(\tau_n(\s, \ast)). $$
More precisely,
$$f^{-n}_{(\s,\ast)} \equiv \left(f^{-1, [n]}_{(\s, \ast)}\circ f^{-1, [n-1]}_{(\sigma(\s), \ast)} \circ \cdots \circ f^{-1, [1]}_{(\sigma^{n-1}(\s), \ast)}\right)\!\Big\vert_{f^n(\tau_n(\s, \ast))}.$$
Moreover, combining this with Proposition \ref{prop_hands_fortails} and Theorem \ref{lem_inverse_hand}, for all $(\ul{\alpha}, \star) \in \mathcal{I}^n(\s,\ast)$,
$$f^n(\tau_n(\ul{\alpha}, \star))\subseteq f^n(\tau_n(\s, \ast)) \quad \text{ and } \quad f^{-n}_{(\s,\ast)}\big\vert_{f^n(\tau_n(\ul{\alpha}, \star))}\equiv f^{-n}_{(\ul{\alpha},\star)}\big\vert_{f^n(\tau_n(\ul{\alpha}, \star))}.$$ 
\end{observation}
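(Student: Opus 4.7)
The statement bundles three items: (a) the well-definedness of $f^{-n}_{(\s,\ast)}$ as an inverse of $f^n$ on $\tau_n(\s,\ast)$; (b) the chain of embeddings $\tau_n(\s,\ast)\hookrightarrow\tau_{n-1}(\sigma(\s),\ast)\hookrightarrow\cdots\hookrightarrow\tau_0(\sigma^n(\s),\ast)$ together with the factorisation of $f^{-n}_{(\s,\ast)}$ as a composition of the single-step branches in \eqref{eq_inversebranch}; and (c) the compatibility $f^{-n}_{(\s,\ast)}\vert_{f^n(\tau_n(\ul{\alpha},\star))} = f^{-n}_{(\ul{\alpha},\star)}\vert_{f^n(\tau_n(\ul{\alpha},\star))}$ for $(\ul{\alpha},\star)\in\mathcal{I}^n(\s,\ast)$. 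The plan is to derive all three points as formal bookkeeping on top of Theorem \ref{lem_inverse_hand}, with no new dynamical input.

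First I would treat (a) and (b) together by an induction on $n$ using item \ref{item2} of Theorem \ref{lem_inverse_hand}. For $n=1$ the statement is exactly \ref{item2}. For the inductive step, given that $f^{n-1}\vert_{\tau_{n-1}(\sigma(\s),\ast)}$ is injective with image contained in $\tau_0(\sigma^n(\s),\ast)$, \ref{item2} tells us that $f$ maps $\tau_n(\s,\ast)$ injectively into $\tau_{n-1}(\sigma(\s),\ast)$; composing with the inductive hypothesis yields both injectivity of $f^n\vert_{\tau_n(\s,\ast)}$ and the chain $f^k(\tau_n(\s,\ast))\subseteq\tau_{n-k}(\sigma^k(\s),\ast)$ for every $0\leq k\leq n$. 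Once the chain is established, the factorisation formula for $f^{-n}_{(\s,\ast)}$ is purely set-theoretic: on any composable chain of injective maps, the inverse of the composition equals the composition of the inverses in reverse order, and each single-step inverse in \eqref{eq_inversebranch} is by definition the set-theoretic inverse of $f$ restricted to the corresponding hand. The inclusions $f^k(\tau_n(\s,\ast))\subseteq f(\tau_{n-k+1}(\sigma^{k-1}(\s),\ast))$ ensure that the domain of $f^{-1,[n-k+1]}_{(\sigma^{k-1}(\s),\ast)}$ indeed contains the image of the previous step, so each composition is legal.

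For (c), I would invoke item \ref{item1} of Theorem \ref{lem_inverse_hand} to obtain $\tau_n(\ul{\alpha},\star)\subseteq\tau_n(\s,\ast)$ whenever $(\ul{\alpha},\star)\in\mathcal{I}^n(\s,\ast)$. Applying the already-established injective map $f^n\vert_{\tau_n(\s,\ast)}$ to both sides yields $f^n(\tau_n(\ul{\alpha},\star))\subseteq f^n(\tau_n(\s,\ast))$, which is the first assertion. For the equality of inverse branches, fix $w\in f^n(\tau_n(\ul{\alpha},\star))$; then $f^{-n}_{(\ul{\alpha},\star)}(w)$ is the unique preimage of $w$ inside $\tau_n(\ul{\alpha},\star)\subseteq\tau_n(\s,\ast)$, and $f^{-n}_{(\s,\ast)}(w)$ is the unique preimage of $w$ inside $\tau_n(\s,\ast)$; by injectivity of $f^n$ on the larger set, these two preimages coincide.

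I do not anticipate any genuine obstacle: the \emph{hard} work (the construction of $\tau_n(\s,\ast)$, injectivity, the embedding under $f$, and the interval $\mathcal{I}^n(\s,\ast)$) has already been done in Theorem \ref{lem_inverse_hand} and Proposition \ref{prop_hands_fortails}. The only point that requires a little care is making sure that the domains and codomains of the single-step branches $f^{-1,[k]}_{(\sigma^{n-k}(\s),\ast)}$ line up properly when composed, i.e.\ that $f^{k}(\tau_n(\s,\ast))$ is contained in the domain $f(\tau_{n-k+1}(\sigma^{k-1}(\s),\ast))$ of the next inverse branch; this is exactly what the chain of embeddings from (b) guarantees, closing the argument.
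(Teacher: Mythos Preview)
Your proposal is correct and matches the paper's approach exactly: the paper treats this Observation as an immediate formal consequence of Theorem~\ref{lem_inverse_hand} (items \ref{item1} and \ref{item2}) and Proposition~\ref{prop_hands_fortails}, with no additional argument beyond the bookkeeping you spell out. If anything, you are more explicit than the paper in checking that the domains of the single-step branches line up, which is a virtue.
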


\begin{proof}[Proof of theorem \ref{thm_signed_intro}]It follows from Theorem \ref{thm_signed}, Observation \ref{rem_mother} and Theorem \ref{prop_hands_fortails}.
\end{proof}
%\fi
%\tableofcontents
\bibliographystyle{alpha}
\bibliography{../biblioComplex}

\begin{thebibliography}{{Par}20b}

\bibitem[Ahl78]{Ahlfors}
L.~V. Ahlfors.
\newblock {\em Complex analysis}.
\newblock McGraw-Hill Book Co., New York, third edition, 1978.

\bibitem[Bar07]{Baranski_Trees}
K.~Bara\'{n}ski.
\newblock Trees and hairs for some hyperbolic entire maps of finite order.
\newblock {\em Math. Z.}, 257(1):33--59, 2007.

\bibitem[BF15]{Benini_Fagella_separation}
A.M. Benini and N.~Fagella.
\newblock {A separation theorem for entire transcendental maps}.
\newblock {\em Proceedings of the London Mathematical Society},
  110(2):291--324, 11 2015.

\bibitem[BK07]{BarKarp_codingtrees}
K.~Bara{\'{n}}ski and B.~Karpi{\'{n}}ska.
\newblock Coding trees and boundaries of attracting basins for some entire
  maps.
\newblock {\em Nonlinearity}, 20(2):391--415, jan 2007.

\bibitem[BM07]{beardon_minda}
A.F. Beardon and D.~Minda.
\newblock {\em The hyperbolic metric and geometric function theory}.
\newblock In: Quasiconformal Mappings and their Applications. Narosa, New
  Delhi, 2007.

\bibitem[BR17]{lasse_dreadlocks}
A.M. Benini and L.~{Rempe}.
\newblock A landing theorem for entire functions with bounded post-singular
  sets.
\newblock {\em To appear in Geom. Funct. Analysis. arXiv:1711.10780v4}, 2017.

\bibitem[Cur91]{Curry}
S.B. Curry.
\newblock One-dimensional nonseparating plane continua with disjoint
  $\epsilon$-dense subcontinua.
\newblock {\em Topology and its Applications}, 39(2):145 -- 151, 1991.

\bibitem[DH84]{Orsaynotes}
A.~Douady and J.~H Hubbard.
\newblock {\em \'Etude dynamique des polynomes complexes}.
\newblock Orsay : {U}niversite de {P}aris-{S}ud, {D}ept. de {M}ath\'ematique,
  1984.

\bibitem[EFJS19]{VassoBcn_innerfunctions}
Vasiliki Evdoridou, Núria Fagella, Xavier Jarque, and David~J. Sixsmith.
\newblock Singularities of inner functions associated with hyperbolic maps.
\newblock {\em Journal of Mathematical Analysis and Applications}, 477(1):536
  -- 550, 2019.

\bibitem[EL92]{eremenkoclassB}
A.~{Erë}menko and M.~Lyubich.
\newblock Dynamical properties of some classes of entire functions.
\newblock {\em Ann. Inst. Fourier (Grenoble)}, 42(4):989--1020, 1992.

\bibitem[Fal14]{Falconer_book_fractals}
Kenneth Falconer.
\newblock {\em Fractal geometry}.
\newblock John Wiley \& Sons, Ltd., Chichester, third edition, 2014.
\newblock Mathematical foundations and applications.

\bibitem[GM93]{Goldberg_Milnor}
L.~Goldberg and J.~Milnor.
\newblock Fixed points of polynomial maps. {II}. {F}ixed point portraits.
\newblock {\em Ann. Sci. \'{E}cole Norm. Sup. (4)}, 26(1):51--98, 1993.

\bibitem[Kiw97]{kiwi_rationalrays}
J.~Kiwi.
\newblock Rational rays and critical portraits of complex polynomials.
\newblock {\em Thesis, SUNY at Stony Brook}, 1997.

\bibitem[{Mih}12]{helenaSemi}
H.~{Mihaljevi{\'c}-Brandt}.
\newblock Semiconjugacies, pinched {C}antor bouquets and hyperbolic orbifolds.
\newblock {\em Trans. Amer. Math. Soc.}, 364(8):4053--4083, 2012.

\bibitem[Nad92]{nadler_continuum}
S.~Nadler.
\newblock {\em Continuum Theory: An Introduction}.
\newblock Chapman \& Hall/CRC Pure and Applied Mathematics. Taylor \& Francis,
  1992.

\bibitem[OS16]{dave_osb_bungee}
J.~Osborne and D.~Sixsmith.
\newblock On the set where the iterates of an entire function are neither
  escaping nor bounded.
\newblock {\em Ann. Acad. Sci. Fenn. Math.}, 41(2):561--578, 2016.

\bibitem[{Par}19a]{mio_thesis}
L.~{Pardo Sim\'on}.
\newblock Dynamics of transcendental entire functions with escaping singular
  orbits.
\newblock {\em PhD thesis, University of Liverpool}, 2019.

\bibitem[{Par}19b]{mio_orbifolds}
L.~{Pardo-Sim\'on}.
\newblock Orbifold expansion and entire functions with bounded {F}atou
  components.
\newblock {\em Preprint, ArXiv:1905.07269v2}, 2019.

\bibitem[{Par}19c]{mio_splitting}
L.~{Pardo-Sim\'on}.
\newblock Splitting hairs with transcendental entire functions.
\newblock {\em Preprint, ArXiv:1905.03778v3}, 2019.

\bibitem[{Par}20a]{mio_newCB}
L.~{Pardo-Sim\'on}.
\newblock Criniferous entire maps with absorbing {C}antor bouquets.
\newblock {\em Arxiv preprint}, 2020.

\bibitem[{Par}20b]{mio_cosine}
L.~{Pardo-Sim\'on}.
\newblock Topological dynamics of cosine maps.
\newblock {\em Preprint, arXiv:2003.07250}, 2020.

\bibitem[Rem08]{lasseSiegel}
L.~Rempe.
\newblock Siegel disks and periodic rays of entire functions.
\newblock {\em J. Reine Angew. Math.}, 624:81--102, 2008.

\bibitem[Rem09]{lasseRidigity}
L.~Rempe.
\newblock Rigidity of escaping dynamics for transcendental entire functions.
\newblock {\em Acta Math.}, 203(2):235--267, 2009.

\bibitem[{Rem}16]{lasse_arclike}
L.~{Rempe-Gillen}.
\newblock Arc-like continua, {J}ulia sets of entire functions, and {E}remenko's
  {C}onjecture.
\newblock {\em Preprint, arXiv:1610.06278v3}, 2016.

\bibitem[RRRS11]{RRRS}
G.~Rottenfu{\ss}er, J.~R\"{u}ckert, L.~Rempe, and D.~Schleicher.
\newblock Dynamic rays of bounded-type entire functions.
\newblock {\em Annals of Mathematics (2)}, 173(1):77--125, 2011.

\bibitem[RRS10]{lasse_Devaneyfast}
Lasse Rempe, Philip~J. Rippon, and Gwyneth~M. Stallard.
\newblock Are {D}evaney hairs fast escaping?
\newblock {\em Journal of Difference Equations and Applications},
  16(5-6):739--762, 2010.

\bibitem[RS08]{lasse_schleicher_combinatorics}
L.~Rempe and D.~Schleicher.
\newblock {\em Combinatorics of bifurcations in exponential parameter space},
  page 317–370.
\newblock London Mathematical Society Lecture Note Series. Cambridge University
  Press, 2008.

\bibitem[RS12]{ripponFast}
P.~J. Rippon and G.~M. Stallard.
\newblock Fast escaping points of entire functions.
\newblock {\em Proc. Lond. Math. Soc. (3)}, 105(4):787--820, 2012.

\bibitem[SZ03]{Schleicher_Zimmer_exp}
D.~Schleicher and J.~Zimmer.
\newblock Escaping points of exponential maps.
\newblock {\em Journal of the London Mathematical Society}, 67(2):380--400, 4
  2003.

\end{thebibliography}
%\addcontentsline{toc}{section}{References}
\end{document}